\newtheorem{theorem}{Theorem}[section]
\newtheorem{lemma}[theorem]{Lemma}
\newtheorem{proposition}[theorem]{Proposition}
\newtheorem{corollary}[theorem]{Corollary}
\theoremstyle{definition}
\newtheorem{remark}[theorem]{Remark}
\theoremstyle{definition}
\newtheorem{definition}[theorem]{Definition}
\theoremstyle{definition}
\theoremstyle{definition}
\def\Fcal{\mathcal{F}}\def\Lcal{\mathcal{L}}\def\Mcal{\mathcal{M}}\def\Pcal{\mathcal{P}}\def\Rcal{\mathcal{R}}\def\Scal{\mathcal{S}}\def\Tcal{\mathcal{T}}
\def\C{\mathbb{C}}
\def\R{\mathbb{R}}
\def\Z{\mathbb{Z}}
\newcommand\parr[1]{{({#1})}}
\def\<{{\langle}}
\def\>{{\rangle}}
\def\toi{{\xhookrightarrow{i}}}
\def\det{{ \operatorname{det}}}
\def\Conv{ \operatorname{Conv}}
\def\triang{\tau}
\def\T{T_{\svt}} 
\def\SVTGE{E(\SVTG)} 
\def\SVT{\Gamma} 
\def\SVTE{E(\SVT)} 
\def\svt{\mathbb{T}}
\def\ECTG{\Lambda_G} 
\def\ECT{\Lambda} 
\def\ECTE{E(\Lambda)} 
\def\ect{\mathcal{E}}
\newcommand{\too}[1]{\xrightarrow[#1]{}}
\def\dotUp{\circ}
\def\dotDown{\bullet}
\def\toi{\too{i}}
\def\toidot{\toi\dotDown}
\def\dottoi{\dotUp\toi}
\def\toiprime{\too{i'}}
\def\toiprimedot{\toiprime\dotDown}
\def\toj{\too{j}}
\def\tojdot{\toj\dotDown}
\def\tok{\too{k}}
\def\tokdot{\tok\dotDown}
\def\dottok{\dotUp\tok}
\def\DZ{\Delta^\Z}
\def\axCardinalityK{\hyperref[ax:cardinality_K]{(T1)}\xspace}
\def\axExistenceK{\hyperref[ax:existence_K]{(T2)}\xspace}
\def\axContainmentK{\hyperref[ax:containment_K]{(T3)}\xspace}
\def\axHexagonK{\hyperref[ax:hexagon_K]{(T4)}\xspace}
\def\Forests{\Fcal}
\def\Matchings{\Pcal\Mcal}
\def\Trees{\Tcal}
\def\RM{\Rcal\Scal\Mcal}
\def\LM{\Lcal\Scal\Mcal}
\def\LD{\operatorname{LD}}
\def\RD{\operatorname{RD}}
\def\LDp{\LD}
\def\RDp{\RD}
\def\LDm{\LD^-}
\def\RDm{\RD^-}
\def\MP{M} 
\def\axCardinality{\hyperref[ax:cardinality]{(T1')}\xspace}
\def\axExistence{\hyperref[ax:existence]{(T2')}\xspace}
\def\axContainment{\hyperref[ax:containment]{(T3')}\xspace}
\def\axHexagon{\hyperref[ax:hexagon]{(T4')}\xspace}
\newcommand\hidefigure[1]{#1} 
\newcommand\donthidefigure[1]{#1}
\newcommand{\Comment}[1]{{\color{red} \sf $\diamondsuit$ [#1]}}
\newcommand{\Gleb}[1]{{\color{blue} \sf [#1]}}
\newcommand{\Pasha}[1]{{\color{green!70!black} \sf [#1]}}
\begin{document}
\numberwithin{equation}{section}

\title{Trianguloids and Triangulations of Root Polytopes}
\author{Pavel Galashin}
\address{Department of Mathematics, Massachusetts Institute of Technology,
  Cambridge, MA 02139, USA}
\email{{\href{mailto:galashin@mit.edu}{galashin@mit.edu}}}
\email{{\href{mailto:apost@math.mit.edu}{apost@math.mit.edu}}}
\author{Gleb Nenashev}
\address{Department of Mathematics, Stockholm University, SE-106 91, Stockholm, Sweden}
\email{{\href{mailto:nenashev@math.su.se}{nenashev@math.su.se}}}
\author{Alexander Postnikov}
\date{\today}

\subjclass[2010]{
  Primary: 52B. Secondary: 15A80.
}

\keywords{
  Triangulations of products of simplices,
  root polytopes,
  generalized permutohedra,
  mixed subdivisions, 
  tropical geometry,
  tropical oriented matroids,
  trianguloids%
}

\begin{abstract}
Triangulations of a product of two simplices and, more generally,
of root polytopes are closely related to Gelfand-Kapranov-Zelevinsky's theory of
discriminants, to tropical geometry, tropical oriented matroids, and to generalized
permutohedra.
We introduce a new approach to these objects, identifying a triangulation of a root polytope with a certain bijection between lattice points
of two generalized permutohedra. 
In order to study such bijections, we define {\it trianguloids} as
edge-colored graphs satisfying simple local axioms.
We prove that trianguloids are in bijection with triangulations
of root polytopes.
\end{abstract}

\maketitle


\section{Introduction}
\def\DD{\Delta^{m-1}\times\Delta^{n-1}}

Triangulations of a product $\DD$ of two simplices have been studied for the last several decades, see e.g.~\cite[Section~8]{ES}, \cite[Section~16.3]{FF}, or~\cite{BCS}. Since then, these objects have naturally appeared in many diverse contexts in combinatorics and algebraic geometry~\cite{SZ,BZ,GKZ,BB,SantosDisc}. They have recently become a subject of active research due to their close relationship to tropical geometry~\cite{DS,AD} and Schubert calculus~\cite{AB}.

Triangulations of $\DD$ are in bijection with various objects, such as \emph{fine mixed subdivisions} of $n\Delta^{m-1}$~\cite{SantosCayley,HRS}, \emph{tropical oriented matroids}~\cite{AD,OYtom}, \emph{tropical pseudohyperplane arrangements}~\cite{Horn}, \emph{matching ensembles}~\cite{BZ,OYmatching}, and  \emph{compatible families of trees}~\cite{Postnikov}. In particular, it was shown in~\cite{Postnikov} that a triangulation of $\DD$ gives rise to a bijection between lattice points of $(n-1)\Delta^{m-1}$ and of $(m-1)\Delta^{n-1}$. More generally, for an arbitrary connected subgraph $G$ of the complete bipartite graph $K_{m,n}$, \cite{Postnikov} introduced the \emph{root polytope} $Q_G$ which specializes to $\DD$ for $G=K_{m,n}$. He showed that a triangulation $\triang$ of $Q_G$ corresponds to a fine mixed subdivision of a \emph{generalized permutohedron} $P_G$ and  yields a bijection $\phi_\triang$ between the lattice points of two \emph{trimmed generalized permutohedra} $P_G^-$ and $P_{G^\ast}^-$. One of the main motivations for this project was to study the bijections $\phi_\triang$ that arise in this way. For example, it follows as a simple consequence of our approach that a triangulation $\triang$ of $Q_G$ can be uniquely reconstructed from the corresponding bijection $\phi_\triang$. See~\cite[Theorem~5]{BZ} and~\cite[Conjecture~6.11]{SZ} for related results.

Another motivation comes from the work of Ardila and Billey~\cite{AB} who described the matroid formed by the lines in the intersection lattice of $m$ generic complete flags in $\C^n$. They raised the \emph{Spread Out Simplices Conjecture} which characterizes the positions of special simplices in a mixed subdivision of $n\Delta^{m-1}$.

We introduce certain edge-colored directed graphs called \emph{trianguloids} (an example shown in Figure~\ref{fig:trianguloid}). We define them axiomatically and show that they are in a natural bijective correspondence with triangulations of $\DD$ (more generally, of $Q_G$). One aspect in which trianguloids differ from  some of the objects listed above is that our axioms are \emph{local}, and in addition, we make no assumptions on the compatibility of the trees appearing in a triangulation. We hope that these properties of our axioms may produce a way of resolving the Spread Out Simplices Conjecture. 



\subsection*{Outline}
We introduce root polytopes and their triangulations in Section~\ref{sec:preliminaries}, and then we state our main results for the case $Q_G=\DD$ in Section~\ref{sec:main_K}. We explain the relationship between trianguloids and various objects that have been studied before in Section~\ref{sec:history}. We then formulate our main result for the case of arbitrary $G$ (Theorem~\ref{thm:svt_implies_triang}) in Section~\ref{sec:main}. 

For the remaining part of the paper, we concentrate on the proofs. In Section~\ref{sec:triang_implies_svt}, we show that each triangulation gives rise to a trianguloid. In Section~\ref{sec:svt_implies_triang}, we show that each trianguloid gives rise to a triangulation. 
 Finally, in Section~\ref{sec:bijections_different} we use our machinery to give simple proofs to Theorems~\ref{thm:bijections_different_K} and~\ref{thm:bijections_different} that a triangulation $\triang$ can be reconstructed from $\phi_\triang$. 

\subsection*{Acknowledgment}
This material is based upon work supported by the National Science Foundation under Grant \textnumero DMS-1440140 while authors were in residence at the Mathematical Sciences Research Institute in Berkeley, California, during the program ``Geometric and Topological Combinatorics'' in Fall~2017.

\section{Preliminaries}\label{sec:preliminaries}
Let us fix integers $m,n\geq 1$ and consider the sets $[m]:=\{1,2,\dots,m\}$ and $[\bar n]:=\{\bar1,\bar2,\dots,\bar n\}$. Define the \emph{complete bipartite graph} $K_{m,n}$ to be a simple graph with vertex set $V:=[m]\cup[\bar n]$ and edge set $\{(i,\bar j)\mid i\in [m], \bar j\in[\bar n]\}$. We identify subgraphs of $K_{m,n}$ with their sets of edges. Clearly, a graph $G\subset K_{m,n}$ is determined by the sets $N_{\bar1}(G),N_{\bar2}(G),\dots,N_{\bar n}(G)\subset[m]$, where $N_{\bar j}(G):=\{i\in [m]\mid (i,\bar j)\in G\}$ is the \emph{neighborhood} of $\bar j\in[\bar n]$ in $G$. Throughout, we fix a connected $G\subset K_{m,n}$ and pay special attention to the case $G=K_{m,n}$.

Consider an $(m+n)$-dimensional real vector space with basis
\[e_1,e_2,\dots,e_m,e_{\bar1},e_{\bar2},\dots,e_{\bar n}\in\R^{m+n}.\]
For a set $I\subset[m]$, we define $\Delta_I\subset \R^m$ to be the convex hull of the points $\{e_i\mid i\in I\}$. Thus $\Delta_I$ is an $(|I|-1)$-dimensional simplex. We denote by $\Delta^{m-1}:=\Delta_{[m]}$ the standard $(m-1)$-dimensional simplex.

\def\DD{\Delta_{[m]}\times\Delta_{[\bar n]}}

The \emph{root polytope} $Q_G\subset \R^{m+n}$ was introduced in~\cite{Postnikov} as the convex hull of the points $e_i-e_{\bar j}$ for all $(i,\bar j)\in G$. When $G=K_{m,n}$ is complete, $Q_G$ is the direct product of two simplices $\DD$, see~\cite[Section~12]{Postnikov}.

We now recall the notions of \emph{Minkowski sum} and \emph{Minkowski difference}.

\def\PGpm{P_G^\pm}
\begin{definition}
  For two subsets $A,B\subset \R^k$, define
  \[ A+B:=\{a+b\mid a\in A,\ b\in B\},\quad A-B:=\{c\in\R^k\mid c+B\subset A\}.\]
\end{definition}
Note that $A-B$ can be empty, so it is not always the case that $(A-B)+B=A$. However, if $A$ and $B$ are convex polytopes then it is true that $(A+B)-B=A$, see~\cite[Lemma~11.1]{Postnikov}. We define three polytopes $P_G,P_G^-,\PGpm\in \R_{\geq0}^m$ associated with $G$ as
\[P_G:=\sum_{\bar j\in[\bar n]}\Delta_{N_{\bar j}(G)},\quad P_G^-:=P_G-\Delta_{[m]},\quad \PGpm:=\left(P_G+(-\Delta_{[m]})\right)\cap \R_{\geq0}^m.\]
Here $-\Delta_{[m]}$ is the convex hull of $\{-e_i\mid i\in [m]\}$.

Thus $P_G$ is a \emph{generalized permutohedron} and $P_G^-$ is a \emph{trimmed generalized permutohedron} in the sense of~\cite{Postnikov}. The polytope $\PGpm$ contains $P_G^-$.  In the case $G=K_{m,n}$, we have $N_{\bar j}(G)=[m]$ for each $\bar j\in[\bar n]$, so $P_G=n\Delta_{[m]}$ and $P_G^-=\PGpm=(n-1)\Delta_{[m]}$ are just dilated $(m-1)$-simplices. 

Our main focus is the set of \emph{triangulations} of the root polytope $Q_G$. For a subgraph $F\subset G$, we let $\Delta_F$ be the convex hull of $e_i-e_{\bar j}$ for all $(i,\bar j)\in F$. Then by~\cite[Lemma~12.5]{Postnikov}, $\Delta_F$ is a simplex in $\R^{m+n}$ if and only if $F$ is a \emph{forest} of $G$ (i.e., a subset of edges of $G$ that contains no cycles). Moreover, the dimension of $\Delta_F$ is $|F|-1$, and thus $\Delta_F$ is top-dimensional (that is, $(m+n-2)$-dimensional) if and only if $F$ is a spanning tree of $G$.

\begin{definition}\label{dfn:intersect_properly}
We say that two simplices $\Delta_F$ and $\Delta_{F'}$ \emph{intersect by their common face} if $\Delta_F\cap\Delta_{F'}=\Delta_{F\cap F'}$. A \emph{triangulation} $\triang$ of $Q_G$ is a simplicial complex such that each simplex is of the form $\Delta_F$ for some forest $F\subset G$, any two simplices in $\triang$ intersect by their common face, and the union of these simplices is $Q_G$.
\end{definition}
It turns out that the above condition admits a simple combinatorial characterization:
\begin{definition}\label{dfn:compatible:intro}

We say that two forests $F,F'$ are \emph{compatible} if there does not exist a pair $M\subset F$, $M'\subset F'$ of partial matchings such that $M\neq M'$ but for all $i\in [m]$ and $\bar j\in[\bar n]$ we have $\deg_i(M)=\deg_i(M')$ and $\deg_{\bar j}(M)=\deg_{\bar j}(M')$.
\end{definition}
 Here $\deg_i(M):=|N_i(M)|$ and $\deg_{\bar j}(M):=|N_{\bar j}(M)|$  denote the \emph{degrees} of $i$ and $\bar j$ in $M$, and a \emph{partial matching} is a subgraph $M\subset G$ such that the degree of every vertex of $G$ in $M$ is at most $1$.
\begin{lemma}\label{lemma:compatible:intro}
Two simplices $\Delta_F$ and $\Delta_{F'}$ intersect by their common face if and only if  $F$ and $F'$ are compatible.
\end{lemma}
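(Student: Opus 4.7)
The plan is to convert the geometric equivalence into a combinatorial one using the fact that, since $F$ is a forest, the vectors $\{v_e:=e_i-e_{\bar j}\}_{e=(i,\bar j)\in F}$ are affinely independent, so every $p\in\Delta_F$ has a \emph{unique} barycentric expansion $p=\sum_{e\in F}\lambda_e v_e$ with $\lambda_e\geq 0$, $\sum_e\lambda_e=1$. Each direction of the equivalence is then a conversion between points of $\Delta_F\cap\Delta_{F'}$ and pairs $(M\subset F,\,M'\subset F')$ of matchings with equal degree sequences.

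For the ``if'' direction, I would assume $M\subset F$ and $M'\subset F'$ are distinct partial matchings with $\deg_v(M)=\deg_v(M')$ for every vertex $v$. Since
\[ \sum_{(i,\bar j)\in M}(e_i-e_{\bar j})=\sum_{i\in[m]}\deg_i(M)\,e_i-\sum_{\bar j\in[\bar n]}\deg_{\bar j}(M)\,e_{\bar j} \]
depends only on the degree sequence, the point $p:=\tfrac1{|M|}\sum_{e\in M}v_e=\tfrac1{|M'|}\sum_{e\in M'}v_e$ lies in $\Delta_F\cap\Delta_{F'}$. If $p$ also lay in $\Delta_{F\cap F'}$, uniqueness of barycentric coordinates inside $\Delta_F$ would force the support $M$ of this expansion to be contained in $F\cap F'$; the same inside $\Delta_{F'}$ would give $M'\subset F\cap F'$; and then uniqueness inside the forest-simplex $\Delta_{F\cap F'}$, applied to the two expansions of $p$ on $M$ and on $M'$, would force $M=M'$, contradicting $M\neq M'$. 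Hence $p$ witnesses $\Delta_F\cap\Delta_{F'}\supsetneq\Delta_{F\cap F'}$.

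For the converse, I would take $p\in\Delta_F\cap\Delta_{F'}$ with expansions $\sum_F\lambda_e v_e=\sum_{F'}\mu_e v_e$. Replacing $\lambda_e$ and $\mu_e$ on each $e\in F\cap F'$ by $\tilde\lambda_e:=\lambda_e-\min(\lambda_e,\mu_e)$ and $\tilde\mu_e:=\mu_e-\min(\lambda_e,\mu_e)$, I still have $\sum_F\tilde\lambda_e v_e=\sum_{F'}\tilde\mu_e v_e$, but now $\tilde\lambda_e\tilde\mu_e=0$ on $F\cap F'$. Next I define a signed weight $\nu$ on $F\cup F'$ by $\nu_e:=\tilde\lambda_e$ where $\tilde\lambda_e>0$ and $\nu_e:=-\tilde\mu_e$ where $\tilde\mu_e>0$; by construction every edge with $\nu_e>0$ lies in $F$ and every edge with $\nu_e<0$ lies in $F'$, and the identity above translates into the vertex-balance $\sum_{e\ni u}\nu_e=0$ at every $u$. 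Orienting positive edges from $[m]$ to $[\bar n]$ and negative edges from $[\bar n]$ to $[m]$ turns this into genuine flow conservation, so $e\mapsto|\nu_e|$ is a nonnegative circulation on a bipartite oriented graph. By the classical decomposition of circulations into directed simple cycles, if $\nu\not\equiv 0$ there is a directed simple cycle $C\subseteq\mathrm{supp}(\nu)$, and our orientation convention forces $C$ to alternate between $F$-edges (traversed $[m]\to[\bar n]$) and $F'$-edges (traversed $[\bar n]\to[m]$). Setting $M:=C\cap F$ and $M':=C\cap F'$ yields two partial matchings on $V(C)$, disjoint and hence distinct, with common degree sequence $\mathbf 1_{V(C)}$ — contradicting compatibility of $F$ and $F'$. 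So $\nu\equiv 0$, which forces $\tilde\lambda\equiv\tilde\mu\equiv 0$, meaning $\lambda$ is supported on $F\cap F'$, and thus $p\in\Delta_{F\cap F'}$.

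The main obstacle will be the bookkeeping around common edges $F\cap F'$: one must arrange via the $\min$-subtraction that the signed weighting $\nu$ has its positive part entirely in $F$ and its negative part entirely in $F'$, so that the directed simple cycle extracted from the flow decomposition really alternates between the two forests and yields genuinely distinct matchings $M\subset F$ and $M'\subset F'$. The remaining ingredients — the affine independence of $\{v_e\}_{e\in F}$ from $F$ being a forest, and the textbook decomposition of nonnegative circulations into directed simple cycles — are entirely routine.
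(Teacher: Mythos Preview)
Your proof is correct and fully self-contained, but it takes a somewhat different route from the paper. The paper does not prove this lemma directly: it introduces an intermediate condition~(4), that the directed graph $U(F,F')$ (with edges $i\to\bar j$ for $(i,\bar j)\in F$ and $\bar j\to i$ for $(i,\bar j)\in F'$) has no directed cycle of length $\geq 3$, then cites \cite[Lemma~12.6]{Postnikov} for the equivalence of~(1) and~(4), and only supplies the short combinatorial arguments for (2)$\Leftrightarrow$(4). Your argument, by contrast, handles both directions head-on via barycentric coordinates and the $\min$-subtraction/circulation trick. The alternating directed cycle you extract from the flow is precisely a cycle in $U(F,F')$, so the combinatorial core is the same; what you gain is that the geometric step (which the paper outsources to Postnikov) is made explicit and elementary. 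What the paper's route buys is a third equivalent condition~(4) that it reuses repeatedly later (e.g.\ in Lemmas~\ref{lemma:LDp_different}, \ref{lemma:tree_forest}, \ref{lemma:axContainment}, \ref{lemma:axHexagon}), so isolating it is economical for the paper as a whole even if it makes the proof of this particular lemma less self-contained.
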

This lemma is a special case of Lemma~\ref{lemma:compatible}.

For a triangulation $\triang$ of $Q_G$, we denote by
\[\Forests(\triang):=\{F\subset G\mid \Delta_F\in\triang\}\]
the collection of \emph{forests} of $\triang$. Just as any other (pure) simplicial complex, $\triang$ is determined by its top-dimensional simplices, so we denote
\[\Trees(\triang):=\{T\in\Forests(\triang)\mid T\text{ is a spanning tree of $G$}\}. \]
Since all top-dimensional simplices $\Delta_T$ have the same volume (by~\cite[Lemma~12.5]{Postnikov}), it follows that a triangulation $\triang$ of $Q_G$ corresponds to a maximal by size collection $\Trees(\triang)$ of pairwise compatible spanning trees of $G$.

Given a spanning tree $T\subset G$, introduce the \emph{left-degree vector} $\LDm(T):=(d_1,\dots,d_m)$ given by $d_i:=\deg_i(T)-1$. We similarly define the \emph{right-degree vector} $\RDm(T):=(d_{\bar1},\dots,d_{\bar n})$.

\begin{lemma}[{\cite[Lemma~12.7]{Postnikov}}]\label{lemma:LD_bij}
Given a triangulation $\triang$ of $Q_G$, we have $\LDm(T)\neq \LDm(T')$ for $T\neq T'\in\Trees(\triang)$, and the set $\{\LDm(T)\mid T\in\Trees(\triang)\}$ equals $P_G^-\cap \Z^m$.
\end{lemma}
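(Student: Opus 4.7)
The plan is to verify that $\LDm$ defines a bijection $\Trees(\triang) \to P_G^- \cap \Z^m$ in three steps: membership, injectivity, and a cardinality match. For the first, fix $T \in \Trees(\triang)$ and any $i \in [m]$, and root $T$ at $i$. Since $T$ is bipartite, each $\bar j \in [\bar n]$ has a parent $f(\bar j) \in [m]$ with $(f(\bar j), \bar j) \in T$, so $f(\bar j) \in N_{\bar j}(G)$ and $\sum_{\bar j} e_{f(\bar j)}$ is a lattice point of $P_G = \sum_{\bar j} \Delta_{N_{\bar j}(G)}$. Counting children of each $k \in [m]$ (the root contributes $\deg_i(T) = d_i+1$ children and each non-root contributes $\deg_k(T)-1 = d_k$) identifies this sum with $\LDm(T) + e_i$. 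Letting $i$ vary gives $\LDm(T) + \Delta_{[m]} \subset P_G$ by convexity, so $\LDm(T) \in P_G^- \cap \Z^m$.

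For injectivity, I would argue by contradiction. Suppose $T \neq T' \in \Trees(\triang)$ satisfy $\LDm(T) = \LDm(T')$, so $\deg_i(T) = \deg_i(T')$ for every $i \in [m]$. The goal is to produce partial matchings $M \subset T$ and $M' \subset T'$ with $M \neq M'$ and identical degree sequences, contradicting compatibility via Lemma~\ref{lemma:compatible:intro}. The construction colors the edges of $T \cup T'$: $T$-edges red, $T'$-edges blue, with common edges allowed either color. Using $\deg_i(T) = \deg_i(T')$ together with $|T|=|T'|$, one can assign colors to common edges so that every $[m]$-vertex of the resulting colored multigraph has equal red and blue degree. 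One then extracts an alternating red-blue closed walk; its red edges form $M$, its blue edges form $M'$, and these are distinct matchings with the same vertex-support.

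For surjectivity I would invoke a volume count. By~\cite[Lemma~12.5]{Postnikov}, every top-dimensional $\Delta_T \in \triang$ has the same volume, so $|\Trees(\triang)|$ equals the normalized volume of $Q_G$. Standard results of~\cite{Postnikov} on generalized permutohedra identify this normalized volume with the lattice point count $|P_G^- \cap \Z^m|$. Combined with the injective, well-defined map from the first two steps, this forces $\LDm$ to be a bijection.

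The main obstacle is producing the alternating closed walk in the injectivity step. The difficulty is that $\LDm(T) = \LDm(T')$ equates only \emph{left}-degrees: the right-degrees $\deg_{\bar j}(T)$ and $\deg_{\bar j}(T')$ may differ, so $T \triangle T'$ alone can decompose into alternating paths with unbalanced $[\bar n]$-endpoints rather than into cycles. The resolution is to allow edges of $T \cap T'$, colored appropriately, to close these walks into alternating cycles, as is already visible in the smallest incompatibility: for $T = \{(1,\bar 1),(2,\bar 1),(2,\bar 2)\}$ and $T' = \{(1,\bar 2),(2,\bar 1),(2,\bar 2)\}$ in $K_{2,2}$, one has $\LDm(T)=\LDm(T')=(0,1)$, and the incompatibility is witnessed by the matchings $\{(1,\bar 1),(2,\bar 2)\}$ and $\{(1,\bar 2),(2,\bar 1)\}$, both of which rely on the shared edges $(2,\bar 1)$ and $(2,\bar 2)$.
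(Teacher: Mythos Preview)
The paper does not give its own proof of this lemma --- it is quoted verbatim from~\cite[Lemma~12.7]{Postnikov} and later invoked in the proof of Lemma~\ref{lemma:LDp_bij} without further argument. So your proposal is being compared against the surrounding toolkit rather than against a proof in the text.

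Your membership and surjectivity steps are fine and match what one finds in~\cite{Postnikov}: the rooting argument produces the right semi-matching witnessing $\LDm(T)+e_i\in P_G$, and the equality $|\Trees(\triang)|=|P_G^-\cap\Z^m|$ follows from the normalized-volume identity there.

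The injectivity step, however, has a genuine gap. You correctly identify that the obstacle is the possible imbalance of right-degrees, but your proposed ``resolution'' is only an example, not an argument. Two concrete problems:
\begin{itemize}
\item The coloring you describe (``edges of $T\cup T'$, common edges allowed either color'') cannot in general be chosen so that every $i\in[m]$ has equal red and blue degree: that forces $|N_i(T)\cap N_i(T')|$ to be even, which fails already for $m=2$, $n=3$, $T=\{(1,\bar1),(2,\bar1),(2,\bar2),(2,\bar3)\}$, $T'=\{(1,\bar2),(2,\bar1),(2,\bar2),(2,\bar3)\}$.
\item If instead one reads ``multigraph'' as duplicating common edges (so each $i$ is automatically balanced), then extracting an alternating closed walk of length $\geq 4$ is \emph{exactly} the assertion that the directed graph $U(T,T')$ of Lemma~\ref{lemma:compatible} has a directed cycle of length $\geq 4$. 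That is the content to be proved; your example does not establish it, and your own $K_{2,2}$ case shows that after removing $2$-cycles the remaining graph can be acyclic (it is the path $\bar2\to1\to\bar1$), so the common edges really are needed in a controlled way.
\end{itemize}

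The clean route is the one the paper sets up in Lemma~\ref{lemma:compatible}: work directly in $U(T,T')$, where every $i\in[m]$ has in-degree equal to out-degree because $\LDm(T)=\LDm(T')$. Start a trail with an edge $i_0\to\bar j_0$ coming from $T\setminus T'$; the Eulerian balance at $[m]$-vertices means the trail can never get stuck at any $i\neq i_0$, and if it closes at $i_0$ the closed trail decomposes into edge-disjoint simple cycles, one of which contains $i_0\to\bar j_0$ and hence has length $\geq 4$. Handling the case where the trail terminates at some $\bar j$ with $\deg_{\bar j}(T)>\deg_{\bar j}(T')$ requires one more idea (e.g.\ first pruning leaves common to $T$ and $T'$, which preserves both $T\neq T'$ and the equality of left degrees); once you supply that, the argument is complete. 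This is essentially the mechanism behind Lemma~\ref{lemma:LDp_different} in the paper, adapted to the situation where only the left degrees are known to agree.
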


In other words, every integer point of $P_G^-$ appears as a left-degree vector for a unique tree in any triangulation. Thus a triangulation $\triang$ gives rise to a bijection $\phi_\triang:P_G^-\cap\Z^m\to P_{G^*}^-\cap Z^n$, where $G^*\subset K_{n,m}$ is the graph with edge set $\{(j,\bar i)\mid (i,\bar j)\in G\}$. In particular, when $G=K_{m,n}$, the map $\phi_\triang$ is a bijection between $(n-1)\Delta^{m-1}\cap\Z^m$ and $(m-1)\Delta^{n-1}\cap\Z^n$. Note that each of the two sets has cardinality ${n+m-2\choose n-1}$. 

\begin{figure}
  \hidefigure{
    \scalebox{0.95}{

}
}
\caption{\label{fig:trianguloid} A family $\Trees(\triang)$ of trees for a triangulation $\triang$ of $Q_{K_{m,n}}$ for $m=3, n=4$ (left), and the corresponding trianguloid $\svt:=\svt_\triang$ (right). The white (resp., black) vertices of $\svt$ are the lattice points of $P_G^-=(n-1)\Delta_{[m]}$ (resp., of $P_G=n\Delta_{[m]}$). Each point $b\in P_G^-$ corresponds to a tree $T_\svt(b)\in\Trees(\triang)$ with $\LDm(T_\svt(b))=b$ so that the outgoing arrows of $b$ in $\svt$ in the direction of $e_i$ are labeled by the neighbors of $i$ in $T_\svt(b)$.}
\end{figure}

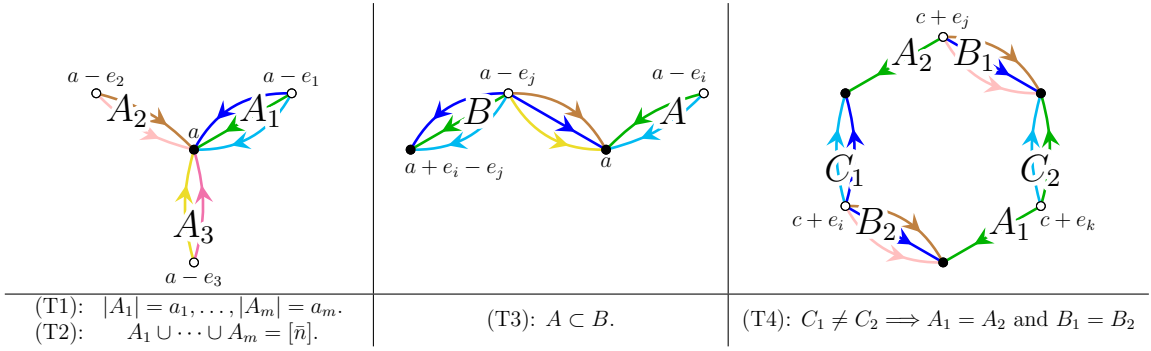
\begin{figure}

  \hidefigure{

\begin{tabular}{c|c|c}
\scalebox{1.0}{
\begin{tikzpicture}[baseline=(ZERO.base)]
\coordinate (ZERO) at (0:0.00);
\tikzset{myptr/.style={decoration={markings,mark=at position 0.7 with %
    {\arrow[scale=1.5,>=stealth]{>}}},postaction={decorate}}}
\coordinate (L3) at (90:3.00);
\coordinate (L2) at (330:3.00);
\coordinate (L1) at (210:3.00);
\coordinate (LL3) at (90:4.50);
\coordinate (LL2) at (330:4.50);
\coordinate (LL1) at (210:4.50);
\node[scale=0.3,draw,circle] (N0x1x1) at (barycentric cs:L1=0.0,L2=0.3333333333333333,L3=0.3333333333333333) { };
\node[scale=0.3,draw,circle] (N1x0x1) at (barycentric cs:L1=0.3333333333333333,L2=0.0,L3=0.3333333333333333) { };
\node[scale=0.3,draw,circle] (N1x1x0) at (barycentric cs:L1=0.3333333333333333,L2=0.3333333333333333,L3=0.0) { };
\node[scale=0.3,draw,circle,fill=black] (N1x1x1) at (barycentric cs:LL1=0.25,LL2=0.25,LL3=0.25) { };
\draw[draw=cyan!80!white,myptr,line width=1pt] (N0x1x1) to[bend right=-30] (N1x1x1);
\draw[draw=green!70!black,myptr,line width=1pt] (N0x1x1) to[bend right=0] (N1x1x1);
\draw[draw=blue,myptr,line width=1pt] (N0x1x1) to[bend right=30] (N1x1x1);
\draw[draw=none] (N0x1x1) to[] node[anchor=center, pos=0.3,scale=1.2] {\contour{white}{$A_1$}} (N1x1x1);
\draw[draw=brown,myptr,line width=1pt] (N1x0x1) to[bend right=-15] (N1x1x1);
\draw[draw=pink,myptr,line width=1pt] (N1x0x1) to[bend right=15] (N1x1x1);
\draw[draw=none] (N1x0x1) to[] node[anchor=center, pos=0.3,scale=1.2] {\contour{white}{$A_2$}} (N1x1x1);
\draw[draw=yellow!90!black,myptr,line width=1pt] (N1x1x0) to[bend right=-15] (N1x1x1);
\draw[draw=magenta!70,myptr,line width=1pt] (N1x1x0) to[bend right=15] (N1x1x1);
\draw[draw=none] (N1x1x0) to[] node[anchor=center, pos=0.3,scale=1.2] {\contour{white}{$A_3$}} (N1x1x1);
\node[scale=0.3,draw,circle] (N0x1x1) at (barycentric cs:L1=0.0,L2=0.3333333333333333,L3=0.3333333333333333) { };
\node[scale=0.3,draw,circle] (N1x0x1) at (barycentric cs:L1=0.3333333333333333,L2=0.0,L3=0.3333333333333333) { };
\node[scale=0.3,draw,circle] (N1x1x0) at (barycentric cs:L1=0.3333333333333333,L2=0.3333333333333333,L3=0.0) { };
\node[scale=0.3,draw,circle,fill=black] (N1x1x1) at (barycentric cs:LL1=0.25,LL2=0.25,LL3=0.25) { };
\node[anchor=south,scale=0.7] (BLAH) at (N1x1x1.north) {$a$};
\node[anchor=-90,scale=0.7] (BLAH) at (N0x1x1.center) {$a-e_1$};
\node[anchor=-90,scale=0.7] (BLAH) at (N1x0x1.center) {$a-e_2$};
\node[anchor=90,scale=0.7] (BLAH) at (N1x1x0.center) {$a-e_3$};
\end{tikzpicture}}
&
\scalebox{1.0}{
\begin{tikzpicture}[baseline=(ZERO.base)]
\coordinate (ZERO) at (0:0.00);
\tikzset{myptr/.style={decoration={markings,mark=at position 0.7 with %
    {\arrow[scale=1.5,>=stealth]{>}}},postaction={decorate}}}
\coordinate (L3) at (90:3.00);
\coordinate (L2) at (330:3.00);
\coordinate (L1) at (210:3.00);
\coordinate (LL3) at (90:4.50);
\coordinate (LL2) at (330:4.50);
\coordinate (LL1) at (210:4.50);
\node[scale=0.3,draw,circle] (N0x1x1) at (barycentric cs:L1=0.0,L2=0.3333333333333333,L3=0.3333333333333333) { };
\node[scale=0.3,draw,circle] (N1x0x1) at (barycentric cs:L1=0.3333333333333333,L2=0.0,L3=0.3333333333333333) { };
\node[scale=0.3,draw,circle,fill=black] (N1x1x1) at (barycentric cs:LL1=0.25,LL2=0.25,LL3=0.25) { };
\node[scale=0.3,draw,circle,fill=black] (N2x0x1) at (barycentric cs:LL1=0.5,LL2=0.0,LL3=0.25) { };
\draw[draw=cyan!80!white,myptr,line width=1pt] (N0x1x1) to[bend right=-15] (N1x1x1);
\draw[draw=green!70!black,myptr,line width=1pt] (N0x1x1) to[bend right=15] (N1x1x1);
\draw[draw=none] (N0x1x1) to[] node[anchor=center, pos=0.3,scale=1.2] {\contour{white}{$A$}} (N1x1x1);
\draw[draw=brown,myptr,line width=1pt] (N1x0x1) to[bend right=-30] (N1x1x1);
\draw[draw=blue,myptr,line width=1pt] (N1x0x1) to[bend right=0] (N1x1x1);
\draw[draw=yellow!90!black,myptr,line width=1pt] (N1x0x1) to[bend right=30] (N1x1x1);
\draw[draw=none] (N1x0x1) to[] node[anchor=center, pos=0.3,scale=1.2] {\contour{white}{$$}} (N1x1x1);
\draw[draw=cyan!80!white,myptr,line width=1pt] (N1x0x1) to[bend right=-30] (N2x0x1);
\draw[draw=green!70!black,myptr,line width=1pt] (N1x0x1) to[bend right=0] (N2x0x1);
\draw[draw=blue,myptr,line width=1pt] (N1x0x1) to[bend right=30] (N2x0x1);
\draw[draw=none] (N1x0x1) to[] node[anchor=center, pos=0.3,scale=1.2] {\contour{white}{$B$}} (N2x0x1);
\node[scale=0.3,draw,circle] (N0x1x1) at (barycentric cs:L1=0.0,L2=0.3333333333333333,L3=0.3333333333333333) { };
\node[scale=0.3,draw,circle] (N1x0x1) at (barycentric cs:L1=0.3333333333333333,L2=0.0,L3=0.3333333333333333) { };
\node[scale=0.3,draw,circle,fill=black] (N1x1x1) at (barycentric cs:LL1=0.25,LL2=0.25,LL3=0.25) { };
\node[scale=0.3,draw,circle,fill=black] (N2x0x1) at (barycentric cs:LL1=0.5,LL2=0.0,LL3=0.25) { };
\node[anchor=north,scale=0.7] (BLAH) at (N1x1x1.center) {$a$};
\node[anchor=160,scale=0.7] (BLAH) at (N2x0x1.south west) {$a+e_i-e_j$};
\node[anchor=-30,scale=0.7] (BLAH) at (N0x1x1.north east) {$a-e_i$};
\node[anchor=south,scale=0.7] (BLAH) at (N1x0x1.center) {$a-e_j$};
\end{tikzpicture}}
&
\scalebox{1.0}{
\begin{tikzpicture}[baseline=(ZERO.base)]
\coordinate (ZERO) at (0:0.00);
\tikzset{myptr/.style={decoration={markings,mark=at position 0.7 with %
    {\arrow[scale=1.5,>=stealth]{>}}},postaction={decorate}}}
\coordinate (L3) at (90:1.50);
\coordinate (L2) at (330:1.50);
\coordinate (L1) at (210:1.50);
\coordinate (LL3) at (90:3.00);
\coordinate (LL2) at (330:3.00);
\coordinate (LL1) at (210:3.00);
\node[scale=0.3,draw,circle] (N0x0x1) at (barycentric cs:L1=0.0,L2=0.0,L3=0.5) { };
\node[scale=0.3,draw,circle] (N0x1x0) at (barycentric cs:L1=0.0,L2=0.5,L3=0.0) { };
\node[scale=0.3,draw,circle] (N1x0x0) at (barycentric cs:L1=0.5,L2=0.0,L3=0.0) { };
\node[scale=0.3,draw,circle,fill=black] (N0x1x1) at (barycentric cs:LL1=0.0,LL2=0.3333333333333333,LL3=0.3333333333333333) { };
\node[scale=0.3,draw,circle,fill=black] (N1x0x1) at (barycentric cs:LL1=0.3333333333333333,LL2=0.0,LL3=0.3333333333333333) { };
\node[scale=0.3,draw,circle,fill=black] (N1x1x0) at (barycentric cs:LL1=0.3333333333333333,LL2=0.3333333333333333,LL3=0.0) { };
\draw[draw=brown,myptr,line width=1pt] (N0x0x1) to[bend right=-30] (N0x1x1);
\draw[draw=blue,myptr,line width=1pt] (N0x0x1) to[bend right=0] (N0x1x1);
\draw[draw=pink,myptr,line width=1pt] (N0x0x1) to[bend right=30] (N0x1x1);
\draw[draw=none] (N0x0x1) to[] node[anchor=center, pos=0.3,scale=1.2] {\contour{white}{$B_1$}} (N0x1x1);
\draw[draw=cyan!80!white,myptr,line width=1pt] (N0x1x0) to[bend right=-15] (N0x1x1);
\draw[draw=green!70!black,myptr,line width=1pt] (N0x1x0) to[bend right=15] (N0x1x1);
\draw[draw=none] (N0x1x0) to[] node[anchor=center, pos=0.3,scale=1.2] {\contour{white}{$C_2$}} (N0x1x1);
\draw[draw=green!70!black,myptr,line width=1pt] (N0x0x1) to[bend right=0] (N1x0x1);
\draw[draw=none] (N0x0x1) to[] node[anchor=center, pos=0.3,scale=1.2] {\contour{white}{$A_2$}} (N1x0x1);
\draw[draw=cyan!80!white,myptr,line width=1pt] (N1x0x0) to[bend right=-15] (N1x0x1);
\draw[draw=blue,myptr,line width=1pt] (N1x0x0) to[bend right=15] (N1x0x1);
\draw[draw=none] (N1x0x0) to[] node[anchor=center, pos=0.3,scale=1.2] {\contour{white}{$C_1$}} (N1x0x1);
\draw[draw=green!70!black,myptr,line width=1pt] (N0x1x0) to[bend right=0] (N1x1x0);
\draw[draw=none] (N0x1x0) to[] node[anchor=center, pos=0.3,scale=1.2] {\contour{white}{$A_1$}} (N1x1x0);
\draw[draw=brown,myptr,line width=1pt] (N1x0x0) to[bend right=-30] (N1x1x0);
\draw[draw=blue,myptr,line width=1pt] (N1x0x0) to[bend right=0] (N1x1x0);
\draw[draw=pink,myptr,line width=1pt] (N1x0x0) to[bend right=30] (N1x1x0);
\draw[draw=none] (N1x0x0) to[] node[anchor=center, pos=0.3,scale=1.2] {\contour{white}{$B_2$}} (N1x1x0);
\node[scale=0.3,draw,circle] (N0x0x1) at (barycentric cs:L1=0.0,L2=0.0,L3=0.5) { };
\node[scale=0.3,draw,circle] (N0x1x0) at (barycentric cs:L1=0.0,L2=0.5,L3=0.0) { };
\node[scale=0.3,draw,circle] (N1x0x0) at (barycentric cs:L1=0.5,L2=0.0,L3=0.0) { };
\node[scale=0.3,draw,circle,fill=black] (N0x1x1) at (barycentric cs:LL1=0.0,LL2=0.3333333333333333,LL3=0.3333333333333333) { };
\node[scale=0.3,draw,circle,fill=black] (N1x0x1) at (barycentric cs:LL1=0.3333333333333333,LL2=0.0,LL3=0.3333333333333333) { };
\node[scale=0.3,draw,circle,fill=black] (N1x1x0) at (barycentric cs:LL1=0.3333333333333333,LL2=0.3333333333333333,LL3=0.0) { };
\node[anchor=30,scale=0.7] (BLAH) at (N1x0x0.center) {$c+e_i$};
\node[anchor=south,scale=0.7] (BLAH) at (N0x0x1.center) {$c+e_j$};
\node[anchor=150,scale=0.7] (BLAH) at (N0x1x0.center) {$c+e_k$};
\end{tikzpicture}}
\\\hline

\scalebox{0.7}{
\begin{tabular}{cc}
\axCardinalityK:
&
$|A_1|=a_1,\dots,|A_m|=a_m.$
\\

\axExistenceK:
&
$A_1\cup \dots\cup A_m=[\bar n]$.
\\

\end{tabular}
}
&
\scalebox{0.7}{\axContainmentK: $A\subset B$.}
&
\scalebox{0.7}{\axHexagonK: $C_1\neq C_2\Longrightarrow A_1=A_2$ and $B_1=B_2$}
\\

\end{tabular}
  }
  \caption{\label{fig:axioms}Axioms for trianguloids.}
\end{figure}

\section{Main results: the case $G=K_{m,n}$}\label{sec:main_K}

We concentrate on  characterizing triangulations by a set of axioms. For simplicity, we first state our definitions and results in the case when $G$ is the complete bipartite graph $K_{m,n}$. For the rest of this section, we assume $G=K_{m,n}$. We denote $\DZ(m,k):=k\Delta_{[m]}\cap\Z^m$. Thus we have $P_G\cap\Z^m=\DZ(m,n)$ and $P_G^-\cap\Z^m=\DZ(m,n-1)$.

\def\SVTG{\Gamma_G} 

Let us consider a directed graph $\SVT$ with vertex set $V(\SVT)=\DZ(m,n-1)\sqcup \DZ(m,n)$ and edge set $\SVTE:=\{b\to b+e_i\mid b\in \DZ(m,n-1), i\in[m]\}$. We alternatively denote an edge $b\to a$, where $a=b+e_i$ for some $i\in [m]$, by either $b\toidot$ or $\dottoi a$.

\begin{definition}\label{dfn:pre_trianguloid_K}
A \emph{pre-trianguloid} is a map $\svt: \SVTE\to 2^{[\bar n]}$ satisfying the following axioms:
  \begin{enumerate}
  \item[(T1)]\label{ax:cardinality_K} for every edge $(\dottoi a)\in\SVTE$, we have $|\svt(\dottoi a)|=a_i$.

  \item[(T2)]\label{ax:existence_K} for each $a\in \DZ(m,n)$ and $\bar j\in [\bar n]$, there exists an index $i\in [m]$ such that $\bar j\in \svt(\dottoi a )$.
  \item[(T3)]\label{ax:containment_K} If both $a$ and $a':=a+e_i-e_j$ belong to $\DZ(m,n)$ then we have
    $$\svt(\dottoi a)\subset \svt(\dottoi a').$$ 
  \end{enumerate}
\end{definition}

These axioms are illustrated in Figure~\ref{fig:axioms}.

\begin{remark}\label{rmk:set_partitionK}
  By Axiom~\axCardinalityK, we have $\sum_{i\in[m]} |\svt(\dottoi a)|=n$, and by Axiom~\axExistenceK, the union of these sets is $[\bar n]$. Thus these sets are pairwise disjoint, so the index $i\in[m]$ in Axiom~\axExistenceK not only exists but is also unique.
\end{remark}

Consider a triangulation $\triang$ of $Q_{K_{m,n}}=\DD$. By Lemma~\ref{lemma:LD_bij}, for each $b\in \DZ(m,n-1)$, there is a unique tree $T_\triang(b)\in\Trees(\triang)$ such that $\LDm(T_\triang(b))=b$. Define a map $\svt_\triang:\SVTE\to 2^{[\bar n]}$ by
\begin{equation}\label{eq:svt_triang_K}
\svt_\triang(b\toidot):=\{\bar j\mid (i,\bar j)\in T_\triang(b)\}.
\end{equation}

\begin{proposition}\label{prop:triang_implies_svt_K}
If $\triang$ is a triangulation of $\DD$ then $\svt_\triang$ is a pre-trianguloid.
\end{proposition}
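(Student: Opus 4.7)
The plan is to verify the three axioms \axCardinalityK, \axExistenceK, and \axContainmentK in turn. Axiom \axCardinalityK is immediate from the definitions: for an edge $b\toidot$ with $a=b+e_i$,
\[
  |\svt_\triang(b\toidot)|=|N_i(T_\triang(b))|=\deg_i(T_\triang(b))=b_i+1=a_i,
\]
where the third equality is the definition of $\LDm$ combined with Lemma~\ref{lemma:LD_bij}.

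For \axExistenceK, fix $a\in\DZ(m,n)$ and $\bar j\in[\bar n]$. The plan is to consider the point $p:=(a/n,e_{\bar j})\in\DD$ and expand it in barycentric coordinates of any top-dimensional simplex $\Delta_T\in\triang$ containing $p$. The constraint $y=e_{\bar j}$ forces $\lambda_{i\bar j'}=0$ for all $\bar j'\neq\bar j$, and the $x$-constraint then pins down $\lambda_{i\bar j}=a_i/n$; hence every such $\Delta_T$ must contain the star $F:=\{(i,\bar j):a_i>0\}$ as a face, so $F\in\Forests(\triang)$. The main task—and the main obstacle—is to produce a spanning tree $\tilde T\in\Trees(\triang)$ with $\tilde T\supset F$ satisfying $\LDm(\tilde T)=a-e_i$ for some $i$ with $a_i>0$. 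I would perturb $p$ slightly in the $y$-direction, toward a generic $y_0\in\Delta_{[\bar n]}$, so that the perturbed point $p_\epsilon$ lies in the relative interior of a unique top-dimensional simplex $\Delta_{\tilde T}$; this enforces $\tilde T\supset F$, and the extra information encoded by the perturbation direction, combined with Lemma~\ref{lemma:LD_bij}, is expected to pin down $\LDm(\tilde T)=a-e_i$ for the appropriate $i$. Once produced, $T_\triang(a-e_i)=\tilde T\supset F\ni(i,\bar j)$ immediately yields $\bar j\in\svt_\triang(\dottoi a)$.

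For \axContainmentK, let $T:=T_\triang(a-e_i)$ and $T':=T_\triang(a-e_j)=T_\triang(a'-e_i)$. Both lie in $\Trees(\triang)$ and are hence compatible by Lemma~\ref{lemma:compatible:intro}. A direct computation gives $\deg_i(T)=a_i$ while $\deg_i(T')=a_i+1$. Assuming for contradiction that some $\bar k\in N_i(T)\setminus N_i(T')$, the identity
\[
  |N_i(T')\setminus N_i(T)|=|N_i(T)\setminus N_i(T')|+1\geq 2
\]
provides slack that I would exploit by tracing an alternating path in the symmetric difference $T\triangle T'$ starting at $i$ through the edge $(i,\bar k)$. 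The plan is to produce from such a path a partial matching $M\subset T$ containing $(i,\bar k)$ and a distinct partial matching $M'\subset T'$ avoiding it, with matching multi-degrees on $[m]\cup[\bar n]$. The resulting $M\neq M'$ would contradict the compatibility of $T$ and $T'$, forcing $N_i(T)\subset N_i(T')$ and hence \axContainmentK. The subtle step here is choosing the matchings so that every vertex degree agrees, which uses the degree slack at $i$ in an essential way; a single-edge matching $\{(i,\bar k)\}$ does not suffice, since its degree sequence can only be realized in $T'$ by the edge $(i,\bar k)$ itself, which by assumption is absent.
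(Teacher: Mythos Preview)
Your verification of \axCardinalityK is fine. The difficulties lie in \axExistenceK and \axContainmentK.

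For \axExistenceK, the perturbation step is a genuine gap. You correctly argue that the star $F=\{(i,\bar j):a_i>0\}$ lies in $\Forests(\triang)$, but an arbitrary spanning tree $\tilde T\in\Trees(\triang)$ containing $F$ need \emph{not} satisfy $\LDm(\tilde T)=a-e_i$ for any $i$. (For instance, with $m=n=3$, $a=(1,1,1)$, and $\bar j=\bar 1$, the tree $\{(1,\bar 1),(2,\bar 1),(3,\bar 1),(1,\bar 2),(1,\bar 3)\}$ contains $F$ but has $\LDm=(2,0,0)$.) Your assertion that the perturbation direction ``is expected to pin down $\LDm(\tilde T)=a-e_i$'' is not a proof; you would have to explain precisely which perturbation forces this, and Lemma~\ref{lemma:LD_bij} alone does not do it.

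For \axContainmentK, your alternating-path idea is in the right spirit, but the ``subtle step'' you flag is real: starting the walk at $i\to\bar k$, you may get stuck at $\bar k$, since for trees the right-degrees $\deg_{\bar k}(T)$ and $\deg_{\bar k}(T')$ are uncontrolled (e.g.\ $N_{\bar k}(T')\subsetneq N_{\bar k}(T)$ is possible), so there may be no outgoing edge of $U(T,T')\setminus\{\text{2-cycles}\}$ at $\bar k$.

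The paper sidesteps both issues by passing from trees to \emph{right semi-matchings}. It first shows (Lemma~\ref{lemma:LDp_bij}\eqref{item:bij:LDp}) that $\LDp:\RM(\triang)\to P_G\cap\Z^m$ is a bijection, and \emph{redefines} $\svt_\triang(\dottoi a):=N_i(F_\triang(a))$ where $F_\triang(a)\in\RM(\triang)$ is the unique right semi-matching with $\LDp=a$. Then \axExistenceK is immediate: $F_\triang(a)$ has $\deg_{\bar j}=1$, so $\bar j$ has a unique neighbor $i$, and $\bar j\in N_i(F_\triang(a))$. For \axContainmentK, with $F=F_\triang(a)$ and $F'=F_\triang(a')$ one runs your cycle argument in $U(F,F')$; now every $\bar j'\in[\bar n]$ has equal in- and out-degree (both $\le 1$), so the walk never gets stuck and one finds a forbidden cycle. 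A separate lemma (Lemma~\ref{lemma:tree_forest}) shows $N_i(F_\triang(a))=N_i(T_\triang(a-e_i))$, so the tree-based definition~\eqref{eq:svt_triang_K} agrees with the semi-matching one. The moral: working with right semi-matchings gives you control over the $[\bar n]$-degrees that the tree formulation lacks, and this is exactly what makes both \axExistenceK and \axContainmentK go through cleanly.
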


See Figure~\ref{fig:trianguloid} for an example.


Conversely, given a pre-trianguloid $\svt:\SVTE\to 2^{[\bar n]}$ and a point $b\in \DZ(m,n-1)$, one can define a subgraph $T_\svt(b)\subset K_{m,n}$ with edge set
\begin{equation}\label{eq:T_svt}
T_\svt(b):=\{(i,\bar j)\mid \bar j\in \svt(b\toidot)\}.
\end{equation}
 
\begin{proposition}\label{prop:svt_implies_trees_K}
For a pre-trianguloid $\svt$ and a point $b\in \DZ(m,n-1)$, $T_\svt(b)$ is a spanning tree of $K_{m,n}$.
\end{proposition}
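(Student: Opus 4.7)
The plan is to verify two facts about $T_\svt(b)$: that it has $m+n-1$ edges and that it is acyclic. Combined, these imply $T_\svt(b)$ is a spanning tree of $K_{m,n}$, because for any forest the number of connected components equals the number of vertices minus the number of edges, giving $(m+n)-(m+n-1)=1$ in our situation.

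The edge count follows immediately from \axCardinalityK: for each $i\in[m]$ we have $\deg_i(T_\svt(b))=|\svt(b\toidot)|=b_i+1$, so the total is $\sum_i(b_i+1)=(n-1)+m=m+n-1$.

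The main step, which I expect to be the main obstacle, is ruling out cycles. Suppose for contradiction that $T_\svt(b)$ contains a cycle $i_1-\bar j_1-i_2-\bar j_2-\cdots-i_\ell-\bar j_\ell-i_1$ (bipartite, so even length $2\ell$ with $\ell\geq 2$, all vertices distinct). Writing $S_i:=\svt(b\toidot)$, the cycle condition gives $\bar j_k\in S_{i_k}\cap S_{i_{k+1}}$ for each $k$ (indices mod $\ell$), and since each cycle vertex has $\deg_{i_k}\geq 2$ we have $b_{i_k}\geq 1$. Set $a:=b+e_{i_1}$; by Remark~\ref{rmk:set_partitionK}, the sets $\svt(\dottoi a)$ for $i$ with $a_i\geq 1$ partition $[\bar n]$. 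The block for $i_1$ equals $S_{i_1}$ and already contains the two cycle elements $\bar j_1$ and $\bar j_\ell$. For each $k=2,\dots,\ell$, axiom \axContainmentK\ applied to the step $a\mapsto a+e_{i_k}-e_{i_1}=b+e_{i_k}$ (with direction $i=i_k$) yields $\svt(\dottoi a)\subset\svt(\dottoi(b+e_{i_k}))=S_{i_k}$; the left block has size $a_{i_k}=b_{i_k}$, while $|S_{i_k}|=b_{i_k}+1$ and $S_{i_k}$ already contains the cycle elements $\bar j_{k-1},\bar j_k$. Hence at most $b_{i_k}-1$ entries of $S_{i_k}$ are non-cycle, forcing the size-$b_{i_k}$ block to contain at least one cycle element. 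Summing over the disjoint partition, the block for $i_1$ contributes $\geq 2$ cycle elements and each of the $\ell-1$ blocks for $k=2,\dots,\ell$ contributes $\geq 1$, giving a total $\geq \ell+1$---contradicting the existence of only $\ell$ cycle elements. Thus $T_\svt(b)$ is acyclic, and by the edge count is a spanning tree of $K_{m,n}$.
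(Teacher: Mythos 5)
Your proof is correct, but it takes a genuinely different route from the paper's. The paper first establishes a ``path lemma'' (its Lemma~6.1): for any simple path $\bar j_1,i_1,\bar j_2,\dots,\bar j_r,i_r$ in $T_\svt(b)$ one has $\bar j_s\in\svt(\dotto{i_s}b+e_{i_t})$ for all $s\le t$, proved by induction along the path by pinpointing, via \axCardinalityK{} and \axContainmentK, exactly which single element (namely $\bar j_{s+1}$) is dropped at each step and using the partition property to exclude it. A cycle then yields $\bar j_1$ lying in two different blocks of the partition at $b+e_{i_r}$, a contradiction. You instead fix the single point $a=b+e_{i_1}$ and run a global pigeonhole count: the block at $i_1$ retains both of its two incident cycle elements, while each block at $i_2,\dots,i_\ell$, being a size-$b_{i_k}$ subset of the $(b_{i_k}+1)$-element set $S_{i_k}$ that contains two cycle elements, must retain at least one; disjointness of the blocks then forces $\ell+1$ distinct cycle elements among only $\ell$. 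Both arguments rest on the same three ingredients (\axCardinalityK, \axContainmentK, and the disjointness from Remark~3.2), but yours trades the inductive identification of dropped elements for a counting argument, which is self-contained and arguably more transparent for this proposition. What the paper's approach buys is reusability: its path lemma is invoked repeatedly later (Lemmas~6.2--6.4 and beyond), whereas your argument proves only the acyclicity at hand. One small point worth noting in your write-up: the edge $\dotto{i_k}a$ exists in $\SVTE$ because $b_{i_k}\ge1$ (which you do establish from $\deg_{i_k}\ge2$), so $a-e_{i_k}=b+e_{i_1}-e_{i_k}$ indeed lies in $\DZ(m,n-1)$; making that dependence explicit would tighten the argument.
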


Thus $\{\Delta_{T_\svt(b)}\mid b\in \DZ(m,n-1)\}$ is a collection of full-dimensional simplices whose total volume is equal to the volume of $\DD$.  However, it may happen that these simplices do not in fact form a triangulation of $\DD$, see Figure~\ref{fig:bad} for an example.

\begin{figure}

\hidefigure{
\scalebox{1.3}{

\begin{tabular}{cc}
\begin{tikzpicture}
\coordinate (L3) at (90:1.00);
\coordinate (L2) at (330:1.00);
\coordinate (L1) at (210:1.00);
\coordinate (LL3) at (90:2.00);
\coordinate (LL2) at (330:2.00);
\coordinate (LL1) at (210:2.00);
\node[scale=0.4,draw,rectangle,anchor=center] (N0x0x1) at (barycentric cs:L1=0.0,L2=0.0,L3=0.5) {\begin{tikzpicture}
\node[fill=black,circle,label=left:$1$] (a0) at (0.00,0.00) {};
\node[fill=black,circle,label=left:$2$] (a1) at (0.00,0.90) {};
\node[fill=black,circle,label=left:$3$] (a2) at (0.00,1.80) {};
\node[fill=red,circle,label=right:$\bar{1}$] (b0) at (1.80,0.00) {};
\node[fill=cyan!80!white,circle,label=right:$\bar{2}$] (b1) at (1.80,1.80) {};
\draw[line width=2.0pt,draw=cyan!80!white] (a0) -- (b1);
\draw[line width=2.0pt,draw=red] (a1) -- (b0);
\draw[line width=2.0pt,draw=red] (a2) -- (b0);
\draw[line width=2.0pt,draw=cyan!80!white] (a2) -- (b1);
\end{tikzpicture}};
\node[scale=0.4,draw,rectangle,anchor=center] (N0x1x0) at (barycentric cs:L1=0.0,L2=0.5,L3=0.0) {\begin{tikzpicture}
\node[fill=black,circle,label=left:$1$] (a0) at (0.00,0.00) {};
\node[fill=black,circle,label=left:$2$] (a1) at (0.00,0.90) {};
\node[fill=black,circle,label=left:$3$] (a2) at (0.00,1.80) {};
\node[fill=red,circle,label=right:$\bar{1}$] (b0) at (1.80,0.00) {};
\node[fill=cyan!80!white,circle,label=right:$\bar{2}$] (b1) at (1.80,1.80) {};
\draw[line width=2.0pt,draw=red] (a0) -- (b0);
\draw[line width=2.0pt,draw=red] (a1) -- (b0);
\draw[line width=2.0pt,draw=cyan!80!white] (a1) -- (b1);
\draw[line width=2.0pt,draw=cyan!80!white] (a2) -- (b1);
\end{tikzpicture}};
\node[scale=0.4,draw,rectangle,anchor=center] (N1x0x0) at (barycentric cs:L1=0.5,L2=0.0,L3=0.0) {\begin{tikzpicture}
\node[fill=black,circle,label=left:$1$] (a0) at (0.00,0.00) {};
\node[fill=black,circle,label=left:$2$] (a1) at (0.00,0.90) {};
\node[fill=black,circle,label=left:$3$] (a2) at (0.00,1.80) {};
\node[fill=red,circle,label=right:$\bar{1}$] (b0) at (1.80,0.00) {};
\node[fill=cyan!80!white,circle,label=right:$\bar{2}$] (b1) at (1.80,1.80) {};
\draw[line width=2.0pt,draw=red] (a0) -- (b0);
\draw[line width=2.0pt,draw=cyan!80!white] (a0) -- (b1);
\draw[line width=2.0pt,draw=cyan!80!white] (a1) -- (b1);
\draw[line width=2.0pt,draw=red] (a2) -- (b0);
\end{tikzpicture}};
\end{tikzpicture}
&
\scalebox{1.0}{
\begin{tikzpicture}
\tikzset{myptr/.style={decoration={markings,mark=at position 0.7 with %
    {\arrow[scale=1.5,>=stealth]{>}}},postaction={decorate}}}
\coordinate (L3) at (90:1.00);
\coordinate (L2) at (330:1.00);
\coordinate (L1) at (210:1.00);
\coordinate (LL3) at (90:2.00);
\coordinate (LL2) at (330:2.00);
\coordinate (LL1) at (210:2.00);
\node[scale=0.3,draw,circle] (N0x0x1) at (barycentric cs:L1=0.0,L2=0.0,L3=0.5) { };
\node[scale=0.3,draw,circle] (N0x1x0) at (barycentric cs:L1=0.0,L2=0.5,L3=0.0) { };
\node[scale=0.3,draw,circle] (N1x0x0) at (barycentric cs:L1=0.5,L2=0.0,L3=0.0) { };
\node[scale=0.3,draw,circle,fill=black] (N0x1x1) at (barycentric cs:LL1=0.0,LL2=0.3333333333333333,LL3=0.3333333333333333) { };
\node[scale=0.3,draw,circle,fill=black] (N1x0x1) at (barycentric cs:LL1=0.3333333333333333,LL2=0.0,LL3=0.3333333333333333) { };
\node[scale=0.3,draw,circle,fill=black] (N1x1x0) at (barycentric cs:LL1=0.3333333333333333,LL2=0.3333333333333333,LL3=0.0) { };
\node[scale=0.3,draw,circle,fill=black] (N0x0x2) at (barycentric cs:LL1=0.0,LL2=0.0,LL3=0.6666666666666666) { };
\node[scale=0.3,draw,circle,fill=black] (N0x2x0) at (barycentric cs:LL1=0.0,LL2=0.6666666666666666,LL3=0.0) { };
\node[scale=0.3,draw,circle,fill=black] (N2x0x0) at (barycentric cs:LL1=0.6666666666666666,LL2=0.0,LL3=0.0) { };
\draw[draw=red,myptr,line width=1pt] (N0x0x1) to[bend right=0] node[anchor=center, pos=0.3,scale=0.4] {\contour{white}{$\bar 1$}} (N0x1x1);
\draw[draw=cyan!80!white,myptr,line width=1pt] (N0x1x0) to[bend right=0] node[anchor=center, pos=0.3,scale=0.4] {\contour{white}{$\bar 2$}} (N0x1x1);
\draw[draw=cyan!80!white,myptr,line width=1pt] (N0x0x1) to[bend right=0] node[anchor=center, pos=0.3,scale=0.4] {\contour{white}{$\bar 2$}} (N1x0x1);
\draw[draw=red,myptr,line width=1pt] (N1x0x0) to[bend right=0] node[anchor=center, pos=0.3,scale=0.4] {\contour{white}{$\bar 1$}} (N1x0x1);
\draw[draw=red,myptr,line width=1pt] (N0x1x0) to[bend right=0] node[anchor=center, pos=0.3,scale=0.4] {\contour{white}{$\bar 1$}} (N1x1x0);
\draw[draw=cyan!80!white,myptr,line width=1pt] (N1x0x0) to[bend right=0] node[anchor=center, pos=0.3,scale=0.4] {\contour{white}{$\bar 2$}} (N1x1x0);
\draw[draw=red,myptr,line width=1pt] (N0x0x1) to[bend right=-15] node[anchor=center, pos=0.3,scale=0.4] {\contour{white}{$\bar 1$}} (N0x0x2);
\draw[draw=cyan!80!white,myptr,line width=1pt] (N0x0x1) to[bend right=15] node[anchor=center, pos=0.3,scale=0.4] {\contour{white}{$\bar 2$}} (N0x0x2);
\draw[draw=red,myptr,line width=1pt] (N0x1x0) to[bend right=-15] node[anchor=center, pos=0.3,scale=0.4] {\contour{white}{$\bar 1$}} (N0x2x0);
\draw[draw=cyan!80!white,myptr,line width=1pt] (N0x1x0) to[bend right=15] node[anchor=center, pos=0.3,scale=0.4] {\contour{white}{$\bar 2$}} (N0x2x0);
\draw[draw=red,myptr,line width=1pt] (N1x0x0) to[bend right=-15] node[anchor=center, pos=0.3,scale=0.4] {\contour{white}{$\bar 1$}} (N2x0x0);
\draw[draw=cyan!80!white,myptr,line width=1pt] (N1x0x0) to[bend right=15] node[anchor=center, pos=0.3,scale=0.4] {\contour{white}{$\bar 2$}} (N2x0x0);
\node[scale=0.3,draw,circle] (N0x0x1) at (barycentric cs:L1=0.0,L2=0.0,L3=0.5) { };
\node[scale=0.3,draw,circle] (N0x1x0) at (barycentric cs:L1=0.0,L2=0.5,L3=0.0) { };
\node[scale=0.3,draw,circle] (N1x0x0) at (barycentric cs:L1=0.5,L2=0.0,L3=0.0) { };
\node[scale=0.3,draw,circle,fill=black] (N0x1x1) at (barycentric cs:LL1=0.0,LL2=0.3333333333333333,LL3=0.3333333333333333) { };
\node[scale=0.3,draw,circle,fill=black] (N1x0x1) at (barycentric cs:LL1=0.3333333333333333,LL2=0.0,LL3=0.3333333333333333) { };
\node[scale=0.3,draw,circle,fill=black] (N1x1x0) at (barycentric cs:LL1=0.3333333333333333,LL2=0.3333333333333333,LL3=0.0) { };
\node[scale=0.3,draw,circle,fill=black] (N0x0x2) at (barycentric cs:LL1=0.0,LL2=0.0,LL3=0.6666666666666666) { };
\node[scale=0.3,draw,circle,fill=black] (N0x2x0) at (barycentric cs:LL1=0.0,LL2=0.6666666666666666,LL3=0.0) { };
\node[scale=0.3,draw,circle,fill=black] (N2x0x0) at (barycentric cs:LL1=0.6666666666666666,LL2=0.0,LL3=0.0) { };
\node[anchor=north east,scale=0.6] (BASIS) at (3.50,2.20) {\scalebox{0.8}{
\begin{tikzpicture}
\coordinate (e0) at (0.00,0.00);
\node[anchor=30] (ee1) at (-150:1) {$e_1$};
\node[anchor=150] (ee2) at (-30:1) {$e_2$};
\node[anchor=-90] (ee3) at (90:1) {$e_3$};
\draw[->] (e0) -- (ee1);
\draw[->] (e0) -- (ee2);
\draw[->] (e0) -- (ee3);
\end{tikzpicture}}};
\end{tikzpicture}}
\\

\end{tabular}

}
}
\caption{\label{fig:bad} A pre-trianguloid for $m=3,n=2$ which does not correspond to a triangulation of $\DD$. The trees on the left are pairwise non-compatible.}
\end{figure}
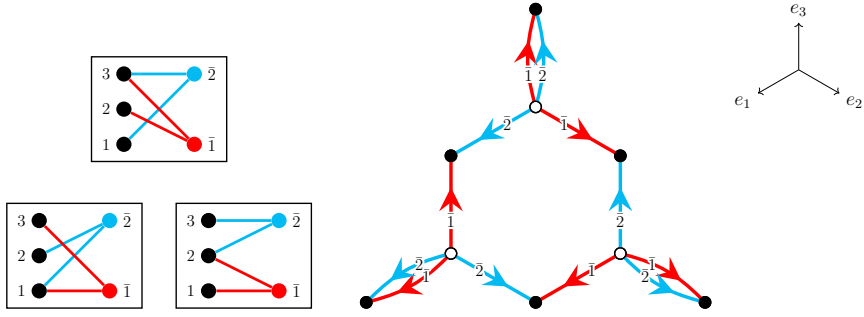

We fix this by introducing an additional axiom. 
\begin{definition}
  A \emph{trianguloid} is a pre-trianguloid $\svt: \SVTE\to 2^{[\bar n]}$ satisfying the following \emph{Hexagon axiom}:
\begin{enumerate}
\item[(T4)] \label{ax:hexagon_K} let  $c\in \DZ(m,n-2)$ and consider three distinct indices $i,j,k\in [m]$ such that $\svt( c+e_i\tojdot)\neq \svt( c+e_k\tojdot)$. Then we have
 \[\svt( c+e_i\tokdot)=\svt(c+e_j\tokdot )\quad \text{and}\quad \svt( c+e_j\toidot )=\svt( c+e_k\toidot ).\]
\end{enumerate}
\end{definition}

Axioms~\axCardinalityK-\axHexagonK are illustrated in Figure~\ref{fig:axioms}. 

The following is our main result for the case $G=K_{m,n}$.
\begin{theorem}\label{thm:svt_implies_triang_K}
  The map $\triang\mapsto\svt_\triang$ is a bijection between triangulations of $\DD$ and trianguloids.
\end{theorem}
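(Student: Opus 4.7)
The plan is to split the proof into three parts: (i) that the map $\triang \mapsto \svt_\triang$ actually lands in the set of trianguloids (i.e., verify the Hexagon axiom \axHexagonK, since Proposition~\ref{prop:triang_implies_svt_K} already handles \axCardinalityK--\axContainmentK), (ii) injectivity, and (iii) surjectivity. For part~(i), I would fix $c \in \DZ(m,n-2)$ and distinct $i,j,k \in [m]$, and examine the three spanning trees $T_i := T_\triang(c+e_i)$, $T_j := T_\triang(c+e_j)$, $T_k := T_\triang(c+e_k)$ of $\triang$. Their simplices meet at a shared codimension-two stratum of $\DD$, and the geometric requirement that $\triang$ be a simplicial complex (equivalently, pairwise compatibility via Lemma~\ref{lemma:compatible:intro}) forces a very constrained pattern of neighborhoods among these trees. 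I would translate this into the combinatorial statement of \axHexagonK by producing, under the failure of its conclusion, a pair of partial matchings inside two of the three trees with identical degree sequences, contradicting Definition~\ref{dfn:compatible:intro}.

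Injectivity (part~(ii)) is immediate: formula~(\ref{eq:T_svt}) inverts~(\ref{eq:svt_triang_K}), so $\svt_\triang$ determines each top-dimensional tree $T_\triang(b)$, and the triangulation $\triang$ is then the pure simplicial complex they generate (Lemma~\ref{lemma:LD_bij}).

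Surjectivity (part~(iii)) carries the main content. Given a trianguloid $\svt$, Proposition~\ref{prop:svt_implies_trees_K} yields a family $\{T_\svt(b) : b \in \DZ(m,n-1)\}$ of spanning trees. Since the simplices $\Delta_{T_\svt(b)}$ have total volume equal to $\Vol(\DD)$, by Lemma~\ref{lemma:compatible:intro} it suffices to prove these trees are pairwise compatible. I would begin with the adjacent case $b' = b + e_i - e_j$: the containment axiom \axContainmentK pins down how the edge sets of $T_\svt(b)$ and $T_\svt(b')$ overlap and forces them to differ by a single ``edge flip,'' from which compatibility is direct. For a general pair I would induct on $\|b - b'\|_1$, using \axHexagonK to produce an intermediate lattice point $b''$ with $\|b-b''\|_1,\|b'-b''\|_1 < \|b-b'\|_1$ together with a hexagonal constraint that allows a putative incompatibility between $T_\svt(b)$ and $T_\svt(b')$ to be pushed to a smaller incompatibility involving $T_\svt(b'')$.

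The main obstacle will be precisely this propagation step in part~(iii). Compatibility is a \emph{non-existence} condition on pairs of partial matchings, and such conditions do not transfer formally along chains of adjacent trees. The Hexagon axiom \axHexagonK is the precise combinatorial input controlling how the family $T_\svt(b)$ changes under lattice moves; the crux of the proof is to encode any hypothetical bad pair of matchings between $T_\svt(b)$ and $T_\svt(b')$ as a bad pair involving $T_\svt(b'')$ at smaller distance, via an \axHexagonK-driven rewriting of the matching, eventually bottoming out at the explicit adjacent case.
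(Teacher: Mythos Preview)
Your plan for part~(iii) contains a genuine gap, and the paper's route is substantially different from what you propose.

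First, the claim that \axContainmentK alone forces adjacent trees $T_\svt(b)$ and $T_\svt(b+e_i-e_j)$ to differ by a single edge flip is false. Axiom~\axContainmentK only controls the $i$-neighborhoods (and, symmetrically, the $j$-neighborhoods) along such a move; it says nothing about $N_\ell(T_\svt(b))$ versus $N_\ell(T_\svt(b'))$ for $\ell\neq i,j$. Figure~\ref{fig:bad} is an explicit pre-trianguloid satisfying \axCardinalityK--\axContainmentK in which all three pairs of adjacent trees are incompatible; so your ``base case'' already needs \axHexagonK. More seriously, even for trianguloids coming from honest triangulations it is \emph{not} true that every pair of lattice-adjacent trees differs by a single edge: two trees $T_\triang(b)$, $T_\triang(b+e_i-e_j)$ need not be facet-adjacent in $\triang$. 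So the adjacent case, as you stated it, is not a valid base case.

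Second, you correctly flag that compatibility is not transitive, but your proposed ``rewriting of the matching'' via \axHexagonK is not specified, and I do not see how it can be made to work: \axHexagonK relates sets of colors at three points $c+e_i,c+e_j,c+e_k$, whereas a hypothetical bad matching pair between $T_\svt(b)$ and $T_\svt(b')$ can involve vertices and colors that have nothing to do with the single coordinate move $b\to b''$.

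The paper avoids these issues entirely. It does \emph{not} attempt to prove pairwise compatibility directly. Instead it proves a facet-replacement property (Lemma~\ref{lemma:replaceable}): for every tree $T_\svt(b)$ and every replaceable edge $(v,\bar u)$ of it, there \emph{exists} some $b'\in P_G^-$ with $T_\svt(b)\setminus T_\svt(b')=\{(v,\bar u)\}$. This is where \axHexagonK is actually used, via a short but nontrivial argument about a $\{0,1\}$-matrix $M(i,j)$ recording which neighborhoods agree. Combined with Lemma~\ref{lemma:are_compatible} (such a $T_\svt(b')$ is automatically compatible with $T_\svt(b)$), this gives: every interior facet of every $\Delta_{T_\svt(b)}$ is shared with some other $\Delta_{T_\svt(b')}$. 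Together with the equal-volume count, the paper then runs a purely topological argument (ray-shoot to show the union is $Q_G$; a local ball argument to show all intersections are along common faces) to conclude that the collection is a triangulation. Pairwise compatibility is a \emph{consequence} of this, not an input.

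Your proof of \axHexagonK in part~(i) via the three trees $T_i,T_j,T_k$ is also different from the paper's: the paper compares the single tree $T=T_\triang(c+e_i)$ with the right semi-matching $F=F_\triang(c+e_j+e_k)$ (so that $N_j(F)=\svt(c+e_k\tojdot)$ and $N_k(F)=\svt(c+e_j\tokdot)$ by Lemma~\ref{lemma:tree_forest}) and reads off the conclusion from the acyclicity of $U(T,F)$. Your three-tree approach might be workable, but as written it does not specify which partial matchings you would exhibit.
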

The generalization of this to arbitrary $G$ is given in Theorem~\ref{thm:svt_implies_triang}.

We now describe a compact way of encoding a pre-trianguloid. Introduce another directed graph $\ECT$ with vertex set $V(\ECT):=\DZ(m,n-1)$ and edge set
\[\ECTE:=\{b\to b'\mid b,b'\in \DZ(m,n-1),\quad  b'=b+e_i-e_j\quad \text{for some $i\neq j\in[m]$}\}.\]

Consider an edge $b\to b'\in\ECTE$. Then by Axioms~\axCardinalityK and~\axContainmentK, there is a single index $\bar j$ such that $\svt( b'\toidot )=\svt(  b\toidot)\sqcup\{\bar j\}$ (disjoint union). Thus each pre-trianguloid $\svt$ defines an edge coloring $\ect_\svt:\ECTE\to [\bar n]$ sending an edge $(b\to b')\in \ECTE$ to the above index $\bar j$. It is easy to see that a pre-trianguloid can in fact be uniquely reconstructed from this map. An example is given in Figure~\ref{fig:ect}.

\begin{figure}
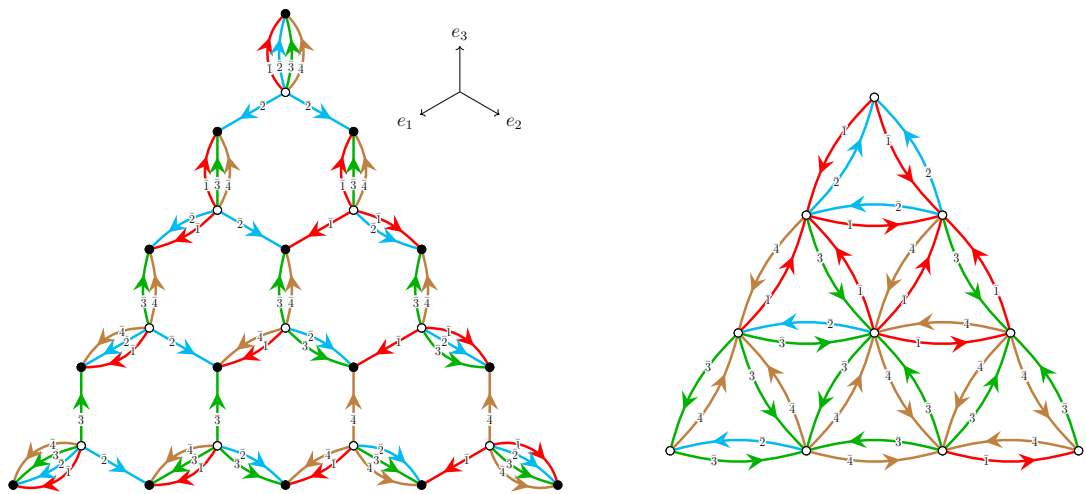


  \hidefigure{
    
    \scalebox{0.95}{

}
}
\caption{\label{fig:ect} A trianguloid $\svt$ (left) and the corresponding edge coloring $\ect_\svt$ (right). Here $m=3$ and $n=4$.}
\end{figure}


We finish by going back to our original question, deducing an analog of~\cite[Theorem~5]{BZ} as a simple consequence of the above results.

\begin{theorem}\label{thm:bijections_different_K}
  For two different triangulations $\triang,\triang'$ of $Q_{K_{m,n}}$, the maps $\phi_{\triang}, \phi_{\triang'}$ are different as well.
\end{theorem}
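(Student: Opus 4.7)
The plan is to apply Theorem~\ref{thm:svt_implies_triang_K}, which identifies a triangulation $\triang$ with its trianguloid $\svt_\triang$, to reduce the statement to showing that the assignment $\svt\mapsto\phi_\svt$ (where $\phi_\svt$ denotes $\phi_\triang$ for $\svt=\svt_\triang$) is injective on the set of trianguloids. So I would suppose for contradiction that two distinct trianguloids $\svt\neq\svt'$ satisfy $\phi_\svt=\phi_{\svt'}$, and aim to produce a contradiction.

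Among all edges $b\toidot\in\SVTE$ where $\svt$ and $\svt'$ disagree, I would select one with $b_i$ \emph{maximal}. Axiom~\axCardinalityK\ rules out $b_i=n-1$ (both values would then be forced to equal $[\bar n]$), so there exists some $i'\in[m]\setminus\{i\}$ with $b_{i'}\geq 1$; set $b^{(+)}:=b+e_i-e_{i'}\in\DZ(m,n-1)$. The maximality of $b_i$ together with axiom~\axContainmentK\ then implies $\svt(b^{(+)}\toi)=\svt'(b^{(+)}\toi)=:S$, while both $\svt(b\toi)$ and $\svt'(b\toi)$ are obtained from $S$ by removing a single element, say $\bar k$ and $\bar k'$ respectively. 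By our choice, $\bar k\neq\bar k'$.

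To derive the contradiction $\bar k=\bar k'$, I would bring in the Hexagon axiom~\axHexagonK\ at the base point $c:=b-e_{i'}\in\DZ(m,n-2)$, applied with the three indices $i, i'$ and any $\ell\in[m]\setminus\{i,i'\}$. This axiom forces explicit equalities among the sets $\svt(c+e_i\to_{i'}),\ \svt(c+e_\ell\to_{i'}),\ \svt(c+e_{i'}\toi)$, etc., on \emph{both} $\svt$ and $\svt'$. Combined with the identical right-degree data $\RDm(T_\svt(b))=\RDm(T_{\svt'}(b))=\phi_\svt(b)$ (which constrains the multiplicities with which each $\bar j\in[\bar n]$ must appear across the sets $\svt(b\to_\ell)$ for $\ell\in[m]$), the hexagon constraints should leave only one candidate for the element removed from $S$, yielding $\bar k=\bar k'$ and the desired contradiction.

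The main obstacle is unpacking~\axHexagonK\ into a concrete rule identifying $\bar k$ from $\phi_\svt$-data alone; in particular, handling the case where the premise of~\axHexagonK\ fails for a given choice of third direction $\ell$ (i.e., $\svt(c+e_i\to_{i'})=\svt(c+e_\ell\to_{i'})$) will likely require iterating over several $\ell$ or exploiting the maximality of $b_i$ once more to import equalities of $\svt,\svt'$ at edges already treated. Once this local identification is carried out, the proof closes as above.
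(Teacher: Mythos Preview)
Your proposal has a genuine gap at the decisive step. After isolating $\bar k,\bar k'$ with $\svt(b\toidot)=S\setminus\{\bar k\}$ and $\svt'(b\toidot)=S\setminus\{\bar k'\}$, you invoke the Hexagon axiom at $c=b-e_{i'}$ hoping it forces $\bar k=\bar k'$. But \axHexagonK\ compares the six edges $(c+e_\alpha)\todot{\beta}$ for $\alpha,\beta\in\{i,i',\ell\}$, and your maximality hypothesis controls exactly one of them: $b^{(+)}\toidot=(c+e_i)\toidot$ is the only edge among these whose source has a coordinate (namely the $i$-th) strictly exceeding $b_i$. On the remaining five edges you have no link between $\svt$ and $\svt'$, so the hexagon relations for the two trianguloids are completely decoupled. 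The datum $\phi_\svt(b)=\phi_{\svt'}(b)$ only records, for each $\bar j$, the cardinality $|\{\ell:\bar j\in\svt(b\todot{\ell})\}|$; this cannot single out which element of $S$ was removed at direction $i$. You acknowledge this as ``the main obstacle'', and nothing in the setup supplies the missing mechanism: iterating over $\ell$ does not help because none of the alternative hexagon edges are controlled either.

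The paper's argument is organized quite differently. It fixes a single $\bar u\in[\bar n]$ for which some $N_{\bar u}(T_\svt(b))\neq N_{\bar u}(T_{\svt'}(b))$, sets $N(b)=N_{\bar u}(T_\svt(b))$ and $N'(b)=N_{\bar u}(T_{\svt'}(b))$, and chooses $b^{(1)}$ maximizing $|N(b)\setminus N'(b)|$ rather than a coordinate. A transfer lemma (a two-sided application of \axHexagonK\ using that $\bar u$ lies in one label set but not the other) then shows that the entire disagreement sets $N(b^{(1)})\setminus N'(b^{(1)})$ and $N'(b^{(1)})\setminus N(b^{(1)})$ persist unchanged at a neighbor $b^{(2)}=b^{(1)}+e_i-e_{j^{(1)}}$. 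Iterating produces an infinite sequence in $P_G^-\cap\Z^m$ with strictly increasing $i$-th coordinate, a contradiction. The key difference is that this extremality principle is chosen so that the hexagon axiom propagates the full disagreement pattern from one point to the next, instead of attempting to determine a single label locally.
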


\section{Motivation}\label{sec:history}
Before stating our main results for the case of arbitrary connected $G\subset K_{m,n}$, we discuss (very informally) some of the objects corresponding to triangulations of $\Delta_{[m]}\times\Delta_{[\bar n]}$ that have been studied earlier. The main goal of this section is to provide intuition and motivating examples; the only two things that we actually use in the remainder of the paper are Definition~\ref{dfn:RSM} and part~\eqref{item:bij:LDp} of Lemma~\ref{lemma:LDp_bij}.

\def\LeftSupport{\operatorname{I}}
\def\RightSupport{\operatorname{J}}
\subsection{Forests, matchings, and tropical oriented matroids}\label{sec:forests-match-trop}
In this section, we list several ways to describe a triangulation $\triang$ of $Q_G$, where $G\subset K_{m,n}$ is an arbitrary connected graph. Recall that $\Forests(\triang)$ and $\Trees(\triang)$ denotes the collection of \emph{forests} and \emph{trees} of $\triang$ respectively.
\begin{definition}\label{dfn:RSM}
We say that a forest $F\subset G$ is a \emph{right semi-matching} if $\deg_{\bar j}(F)=1$ for all $\bar j\in[\bar n]$, and denote
\[\RM(\triang)=\{F\in\Forests(\triang)\mid F\text{ is a right semi-matching}\}.\]
\end{definition}
We similarly define \emph{left semi-matchings} to be forests $F\subset G$ such that $\deg_i(F)=1$ for all $i\in [m]$, and denote by $\LM(\triang)$ the set of left semi-matchings in $\Forests(\triang)$. Finally, recall that a forest $F$ is a \emph{partial matching} if $0\leq \deg_i(F),\deg_{\bar j}(F)\leq 1$ for all $i\in [m]$ and $\bar j\in[\bar n]$. In this case, we call the set $\LeftSupport(F):=\{i\in[m]\mid \deg_i(F)=1\}$ (resp., $\RightSupport(F):=\{\bar j\in[\bar n]\mid \deg_{\bar j}(F)=1\}$) the \emph{left support} (resp., the \emph{right support}) of $F$, and say that $F$ is a \emph{matching between $\LeftSupport(F)$ and $\RightSupport(F)$}.

We denote
\[\Matchings(\triang)=\{F\in\Forests(\triang)\mid F\text{ is a partial matching}\}.\]

The following result (cf. Figure~\ref{fig:matchings}) will follow as a simple corollary to Lemma~\ref{lemma:compatible}.
\begin{proposition}\label{prop:determined}
  A triangulation $\triang$ of $Q_G$ is determined uniquely by each of the following sets:
  \begin{itemize}
  \item $\Trees(\triang)$;
  \item $\RM(\triang)$;
  \item $\LM(\triang)$;
  \item $\Matchings(\triang)$.  
  \end{itemize}
  More precisely, for each of the four collections above, $\Forests(\triang)$ is equal to the set of all forests $F\subset G$ compatible (see Definition~\ref{dfn:compatible:intro}) with all $F'$ belonging to that collection.
\end{proposition}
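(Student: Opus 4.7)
The plan is to establish, for each of the four candidate collections $\mathcal{C} \in \{\Trees, \RM, \LM, \Matchings\}$, that $\Forests(\triang)$ coincides with the set of forests $F \subset G$ compatible with every element of $\mathcal{C}(\triang)$. The forward inclusion is essentially free: Definition~\ref{dfn:intersect_properly} demands pairwise compatible simplices in $\triang$, and since each of $\Trees(\triang)$, $\RM(\triang)$, $\LM(\triang)$, $\Matchings(\triang)$ is contained in $\Forests(\triang)$, Lemma~\ref{lemma:compatible:intro} delivers exactly this compatibility.

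The substance lies in the reverse inclusion, which I would first prove in the case $\mathcal{C} = \Trees$. Given a forest $F$ compatible with every $T \in \Trees(\triang)$, pick a point $x$ in the relative interior of $\Delta_F$. Because $\{\Delta_T : T \in \Trees(\triang)\}$ covers $Q_G$, there is some $T$ with $x \in \Delta_T$. The assumed compatibility of $F$ and $T$ combined with Lemma~\ref{lemma:compatible:intro} gives $\Delta_F \cap \Delta_T = \Delta_{F \cap T}$, which is a face of $\Delta_F$ meeting its relative interior at $x$ and therefore equal to all of $\Delta_F$. This forces $F \subseteq T$, so $\Delta_F$ is a face of $\Delta_T \in \triang$ and hence belongs to $\triang$.

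For the three remaining collections, I plan to reduce everything to the tree case by showing that compatibility with all of $\mathcal{C}(\triang)$ already forces compatibility of $F$ with every $T \in \Trees(\triang)$. Fix such a $T$ and any pair of matchings $M \subseteq F$, $M' \subseteq T$ with identical left and right degree sequences; the goal is $M = M'$. If $\mathcal{C} = \Matchings$, then $M'$ is itself a partial matching contained in $T$, so $\Delta_{M'}$ is a face of $\Delta_T \in \triang$ and $M' \in \Matchings(\triang)$, whence the hypothesized compatibility of $F$ with $M'$ applied to the pair $(M, M')$ immediately gives $M = M'$. If $\mathcal{C} = \RM$, I would enlarge $M'$ to a right semi-matching $R \subseteq T$ by choosing, for each $\bar j$ with $\deg_{\bar j}(M') = 0$, a single edge of $T$ incident to $\bar j$ (which exists because $T$ spans $G$); then $R$ is a subforest of $T$ with all right-degrees equal to $1$, hence $R \in \RM(\triang)$, and compatibility of $F$ with $R$ applied to $(M, M')$ yields $M = M'$. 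The case $\mathcal{C} = \LM$ is symmetric.

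The only delicate point is verifying that the auxiliary objects used in the reductions, namely $M'$ and $R$, actually belong to $\mathcal{C}(\triang)$; this reduces to the observation that $\triang$ is a genuine simplicial complex, so all faces of its simplices remain in it, and this is exactly what makes every subforest of a tree in $\Trees(\triang)$ lie in $\Forests(\triang)$. Once this bookkeeping is in place, each of the three reductions is a one-line application of the compatibility axiom, and combining them with the relative-interior argument completes the proof.
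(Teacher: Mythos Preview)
Your proof is correct but follows a different route from the paper's.

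The paper argues via the chain $\Trees(\triang) \to \RM(\triang) \to \Matchings(\triang) \to \triang$: it shows that every partial matching in $\triang$ sits inside some right semi-matching of $\triang$ (which in turn sits inside some tree), so $\Matchings(\triang)$ is recoverable from $\RM(\triang)$; then it invokes Lemma~\ref{lemma:compatible:intro} to conclude that $\Forests(\triang)$ consists exactly of those forests all of whose partial matchings lie in $\Matchings(\triang)$. The ``more precisely'' statement for the other three collections then follows because compatibility is inherited by subforests.

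You instead treat $\Trees(\triang)$ as the hub: your relative-interior argument directly shows that any forest compatible with all trees of $\triang$ must be a subforest of one of them, and you reduce the other three cases to this one by extending the witnessing matching $M'\subset T$ upward (to a right or left semi-matching inside $T$, or leaving it as is for $\Matchings$). Your geometric step for $\Trees$ is cleaner and more self-contained than the paper's detour through $\Matchings$; on the other hand, the paper's characterization ``$F\in\Forests(\triang)$ iff every partial matching $M\subset F$ lies in $\Matchings(\triang)$'' is itself a useful structural statement (it is what connects to the matching-ensemble viewpoint of Section~\ref{sec:newt-polyt-prod}), which your argument does not make explicit.
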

\begin{proof}
Clearly $\triang$ is determined by $\Trees(\triang)$, and  $\Trees(\triang)$ determines $\RM(\triang)$. Let us show that $\RM(\triang)$ determines $\Matchings(\triang)$. Each partial matching $M\in\Matchings(\triang)$ is contained inside some tree $T\in\Trees(\triang)$ because $\triang$ is a pure simplicial complex, and then it is easy to see that there exists a right semi-matching $F\subset T$ such that $M\subset F$. This implies that $F\in\RM(\triang)$. Thus $\Matchings(\triang)$ is the set of all partial matchings of $G$ that are contained in some element of $\RM(\triang)$, so $\RM(\triang)$ determines $\Matchings(\triang)$. The proof that $\Trees(\triang)$ determines $\LM(\triang)$ which determines $\Matchings(\triang)$ is completely analogous. It suffices to show that $\Matchings(\triang)$ determines $\triang$. Explicitly, $\Forests(\triang)$ is the collection of all forests $F\subset G$ that do not contain a partial matching that is not in $\Matchings(\triang)$. This fact follows from Lemma~\ref{lemma:compatible:intro} (whose proof we defer to Section~\ref{sec:triang_implies_svt}).
\end{proof}

We now review the relationship between the above objects and \emph{tropical oriented matroids} of~\cite{AD}. It was conjectured in~\cite[Conjecture~5.1]{AD} that tropical oriented matroids are in bijection with subdivisions of $\DD$. Oh and Yoo~\cite{OYtom} proved that \emph{generic} tropical oriented matroids are in bijection with triangulations of $\DD$, and the case of general subdivisions was completed by Horn~\cite{Horn}.

A tropical oriented matroid $M$ is by definition a collection of \emph{types} satisfying some axioms, see~\cite{AD}. In the language of triangulations, types correspond to forests $F\in\Forests(\triang)$ such that $\deg_{\bar j}(F)\geq 1$ for all $\bar j\in [\bar n]$. Alternatively, a tropical oriented matroid is determined by the collection of its \emph{topes} or by the collection of its \emph{vertices}, see~\cite[Theorems~4.4 and~4.6]{AD}. The topes correspond to the elements of $\RM(\triang)$ and the vertices correspond to the elements of $\Trees(\triang)$, i.e., the right semi-matchings and the trees of $\triang$, respectively. Thus in the case of $G=K_{m,n}$, Proposition~\ref{prop:determined} follows from~\cite[Theorems~4.4 and~4.6]{AD} together with~\cite[Lemma~4.5]{OYmatching}.

\begin{figure}
  \def\nodesc{1}
 
  \begin{tikzpicture}[xscale=0.2,yscale=1.0]
    \node[scale=\nodesc,draw,anchor=south] (T) at (0,2) {$\Trees(\triang)$};
    \node[scale=\nodesc,draw,anchor=east] (R) at (-1,1) {$\RM(\triang)$};
    \node[scale=\nodesc,draw,anchor=west] (L) at (1,1) {$\LM(\triang)$};
    \node[scale=\nodesc,draw,anchor=north] (M) at (0,0) {$\Matchings(\triang)$};
    \draw[line width=0.5pt] (T)--(R)--(M)--(L)--(T);
  \end{tikzpicture}
  \caption{\label{fig:matchings}Different collections of forests of $\triang$ that determine it.}
\end{figure}
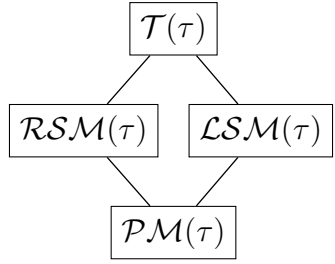

For a forest $F\subset G$, define
\[\LDp(F):=(\deg_i(F))_{i\in [m]}\in\Z^m,\quad \RDp(F):=(\deg_{\bar j}(F))_{\bar j\in [\bar n]}\in\Z^{n}.\]

\def\LRS{\operatorname{IJ}}
\def\MP{\operatorname{MSP}}

Let us denote by $\Matchings(G)$ the set of all partial matchings $F$ such that $F\subset G$. We define $\LRS_G\subset 2^{[m]}\times 2^{[\bar n]}$ by
\[\LRS_G:=\{(\LeftSupport(F),\RightSupport(F))\mid F\in\Matchings(G)\}.\]
Identifying a pair $(I,J)\in  2^{[m]}\times 2^{[\bar n]}$ with a vector $e_I+e_J:=\sum_{i\in I} e_i+\sum_{\bar j\in J} e_{\bar j}\in\R^{m+n}$, we see that $\LRS_G$ is the set of lattice points of a certain polytope in $\R^{m+n}$ which we call the \emph{matching support polytope} $\MP_G$ of $G$:
\[\MP_G:=\Conv \left(\{e_I+e_J\mid (I,J)\in\LRS_G\}\right)=\Conv \left(\{(\LDp(F),\RDp(F))\mid F\in\Matchings(G) \}\right).\]

We prove the following generalization of Lemma~\ref{lemma:LD_bij}.
\begin{lemma}\label{lemma:LDp_bij}
  Given a triangulation $\triang$ of $Q_G$, the following maps are bijections:
  \begin{enumerate}[\normalfont(1)]
  \item\label{item:bij:LD} $\LDm:\Trees(\triang)\to P_G^-\cap\Z^m$;
  \item\label{item:bij:RD} $\RDm:\Trees(\triang)\to P_{G^\ast}^-\cap\Z^n$;
  \item\label{item:bij:LDp} $\LDp:\RM(\triang)\to P_G\cap\Z^m$;
  \item\label{item:bij:RDp} $\RDp:\LM(\triang)\to P_{G^\ast}\cap\Z^n$;
  \item\label{item:bij:LDpRDp} $(\LDp,\RDp):\Matchings(\triang)\to \MP_G\cap\Z^{m+n}$.
  \end{enumerate}
\end{lemma}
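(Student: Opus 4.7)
The plan is to reduce parts (1) and (2) of the lemma to Lemma~\ref{lemma:LD_bij}, and to prove parts (3)--(5) by combining the compatibility criterion (Lemma~\ref{lemma:compatible:intro}) for injectivity with the Cayley trick identification of $\triang$ with a fine mixed subdivision of $P_G$ for surjectivity. Part~(1) is exactly Lemma~\ref{lemma:LD_bij}. Part~(2) follows by applying that same lemma to $\triang$ viewed as a triangulation of $Q_{G^*}$ under the natural relabeling exchanging left and right vertices; under this relabeling the left-degree vector of a tree in $G^*$ becomes the right-degree vector in $G$.

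For part~(3), I would argue injectivity and surjectivity separately. For injectivity, suppose $F, F' \in \RM(\triang)$ satisfy $\LDp(F) = \LDp(F')$; since both are right semi-matchings we also have $\RDp(F) = \RDp(F') = (1,\dots,1)$, so $F$ and $F'$ have identical degree sequences at every vertex. Assuming $F \neq F'$, the bipartite graph $F \triangle F'$ has all even vertex degrees; tracing an alternating walk starting from any $\bar j$ at which $F$ and $F'$ disagree yields a simple alternating cycle whose $F$-edges $M \subset F$ and $F'$-edges $M' \subset F'$ are distinct partial matchings with equal degree sequences, contradicting the compatibility of $F, F'$ granted by Lemma~\ref{lemma:compatible:intro}. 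For surjectivity, the Cayley trick identifies $\triang$ with a fine mixed subdivision of $P_G$ whose top-dimensional cells are the Minkowski sums $C_T := \sum_{\bar j} \Delta_{N_{\bar j}(T)}$ for $T \in \Trees(\triang)$. Any lattice point $p \in P_G \cap \Z^m$ lies in some such $C_T$, and since the lattice points of a Minkowski sum of standard simplices $\sum_{\bar j} \Delta_{S_{\bar j}}$ are exactly the sums $\sum_{\bar j} e_{i_{\bar j}}$ with $i_{\bar j} \in S_{\bar j}$, we can write $p = \sum_{\bar j} e_{i_{\bar j}}$ with $i_{\bar j} \in N_{\bar j}(T)$. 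The right semi-matching $F := \{(i_{\bar j}, \bar j) : \bar j \in [\bar n]\} \subset T$ then lies in $\RM(\triang)$ and satisfies $\LDp(F) = p$.

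Part~(4) is dual to part~(3) via the $G \leftrightarrow G^*$ symmetry. For part~(5), injectivity of $(\LDp, \RDp)$ on $\Matchings(\triang)$ is immediate from Definition~\ref{dfn:compatible:intro}: two distinct $M, M' \in \Matchings(\triang)$ with equal $(\LDp, \RDp)$ themselves serve as the forbidden pair of partial matchings. For surjectivity, given $(I, J) \in \LRS_G$ I would pick any matching $M_0 \subset G$ with supports $(I, J)$, extend it to a right semi-matching $F_0$, apply part~(3) to produce the unique $F \in \RM(\triang)$ with $\LDp(F) = \LDp(F_0)$, and then extract from a spanning tree $T \in \Trees(\triang)$ containing $F$ a matching $M \subset T$ whose supports are exactly $(I, J)$.

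The main technical obstacle is the surjectivity in part~(3), which relies on the Cayley trick correspondence and on the Newton-polytope description of lattice points in Minkowski sums of standard simplices; these facts are standard but require nontrivial setup. The surjectivity in part~(5) is subtler still, since the $0/1$ degree constraint imposed by being a matching restricts how $M$ can be extracted from a containing tree, and one must separately verify that every support pair in $\LRS_G$ is realized exactly once in $\Matchings(\triang)$.
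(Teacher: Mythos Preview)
Your treatment of parts (1)--(4) and the injectivity half of part (5) is correct and essentially matches the paper. The paper packages the part (3) injectivity as a separate lemma (Lemma~\ref{lemma:LDp_different}) but uses the same alternating-cycle idea via Lemma~\ref{lemma:compatible}, and for surjectivity it likewise appeals to the fine mixed subdivision of $P_G$ from \cite{Postnikov}.

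The genuine gap is in your surjectivity sketch for part (5). Routing through part (3) does not help: the right semi-matching $F \in \RM(\triang)$ with $\LDp(F) = \LDp(F_0)$ need not contain any matching with supports $(I,J)$, nor need a tree $T \supset F$ in $\Trees(\triang)$, because $\LDp$ discards all information about the right support $J$. Concretely, $F$ could be a permutation matching edge-disjoint from $M_0$, and a tree containing $F$ need not admit a system of distinct representatives between $I$ and $J$ at all. You flag this step as unfinished, but the sketched route does not point toward a completion.

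The paper's argument for part (5) surjectivity is different and bypasses the mixed subdivision entirely. Given $(I,J) \in \LRS_G$ with $|I|=|J|=k$, it places the barycenter $p_{I,J} := \tfrac{1}{k}(e_I + e_J)$ inside $Q_G$, so $p_{I,J} \in \Delta_T$ for some $T \in \Trees(\triang)$. Writing $p_{I,J}$ as a convex combination of the vertices $e_i + e_{\bar j}$ of $\Delta_T$ and taking $F \subset T$ to be the edges with nonzero coefficient, a leaf-stripping argument shows each leaf of $F$ forces its incident edge to have coefficient exactly $1/k$, so both its endpoints are leaves; hence $F$ is a partial matching with $\LeftSupport(F)=I$ and $\RightSupport(F)=J$. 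This exploits the geometry of $Q_G$ directly rather than the Minkowski-sum structure of $P_G$, and it is what makes the $0/1$ degree constraint come out automatically.
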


\subsection{Newton polytopes and products of minors}\label{sec:newt-polyt-prod}
Fix a connected graph $G\subset K_{m,n}$ and consider an $m\times n$ matrix $f_G=(f_{ij})$ with $f_{ij}$ being an indeterminate for $(i,\bar j)\in G$ and $f_{ij}=0$ otherwise.

\def\Newton{\operatorname{Newton}}
For two subsets $I\subset[m]$, $J\subset[\bar n]$ of the same size, define $\Delta_{I,J}(f_G)$ to be the minor of $f_G$ with row set $I$ and column set $J$. Thus $\Delta_{I,J}(f_G)$ is a nonzero polynomial if and only if $(I,J)\in\LRS_G$. Let $N_G$ be the \emph{Newton polytope} of the product of all non-zero minors of $f_G$:
\begin{equation}\label{eq:Newton}
N_G:=\Newton \left(\prod_{(I,J)\in\LRS_G} \Delta_{I,J}(f_G)\right) \subset \R^{G}.
\end{equation}

For the case $G=K_{m,n}$ it was shown in~\cite[Example~10.C.1.3(b)]{GKZ} that $N_G$ is the \emph{secondary polytope} of $\DD$. We generalize this statement to arbitrary $G$:

\begin{proposition}\label{prop:nabla}
For $G\subset K_{m,n}$, $N_G$ is combinatorially equivalent to the secondary polytope of $Q_G$. More precisely, these polytopes have the same normal fans.
\end{proposition}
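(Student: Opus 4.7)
The plan is to show that $N_G$ and the secondary polytope of $Q_G$ share the same normal fan by identifying each fan with the common refinement of a family of matching-polytope normal fans, and then invoking Proposition~\ref{prop:determined}.

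First, I would observe that the minor $\Delta_{I,J}(f_G)$ is a signed sum of monomials $\prod_{(i,\bar j) \in M} f_{ij}$ indexed by matchings $M \subset G$ between $I$ and $J$, with no monomial cancellations (the $f_{ij}$ are algebraically independent indeterminates). Hence its Newton polytope is the matching polytope $P_{I,J} \subset \R^G$, i.e., the convex hull of the indicator vectors of such matchings. Because the Newton polytope of a product is the Minkowski sum of Newton polytopes, we get
\[N_G = \sum_{(I,J) \in \LRS_G} P_{I,J},\]
and the normal fan of a Minkowski sum is the common refinement of the normal fans of the summands. Hence two weights $w,w' \in \R^G$ lie in the same cone of the normal fan of $N_G$ iff for every $(I,J) \in \LRS_G$ they select the same max-weight matching face $F_{I,J}(w) = F_{I,J}(w')$ of $P_{I,J}$.

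Second, the secondary fan of $Q_G$ has cones indexed by regular subdivisions, with maximal cones indexed by regular triangulations. I would bridge the two fans via the following claim: for generic $w$ inducing a regular triangulation $\triang_w$, the max-weight matching $F_{I,J}(w)$ is unique for every $(I,J) \in \LRS_G$, and the assignment $(I,J) \mapsto F_{I,J}(w)$ is a bijection onto $\Matchings(\triang_w)$. Granting this, Proposition~\ref{prop:determined} shows that the data $\{F_{I,J}(w)\}_{(I,J) \in \LRS_G}$ and the triangulation $\triang_w$ mutually determine each other, so the maximal cones of both fans coincide; a standard limiting argument on non-generic weights then extends the equality to all cones.

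The main obstacle is establishing this bridging claim. I would interpret $w$ as a height function lifting the vertices $e_i - e_{\bar j}$ of $Q_G$, so that $\triang_w$ consists of the simplices $\Delta_F$ whose lifts project from lower faces of the lifted polytope. A matching $M$ between $I$ and $J$ is $w$-max-weight precisely when a linear functional witnesses $\Delta_M$ as such a lower face, giving $\Delta_M \in \triang_w$. Uniqueness for generic $w$ and exhaustiveness of the correspondence then follow from the lattice-point bijection in part~\eqref{item:bij:LDpRDp} of Lemma~\ref{lemma:LDp_bij}, which identifies $\Matchings(\triang_w)$ with $\MP_G \cap \Z^{m+n}$, and hence with $\LRS_G$. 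For $G = K_{m,n}$ this is the classical argument of~\cite[Example~10.C.1.3(b)]{GKZ}; the same reasoning, with bipartite matchings of $K_{m,n}$ replaced by matchings of $G$, carries over verbatim to arbitrary connected $G \subset K_{m,n}$.
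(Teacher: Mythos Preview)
Your proposal is correct in outline and shares the paper's first step: writing $N_G=\sum_{(I,J)}P_{I,J}$ so that its normal fan is the common refinement of the matching-polytope normal fans. The difference lies in how you identify the secondary fan with that same common refinement.

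The paper does this in one stroke. It observes (your ``bridging claim'' in sharper form) that for any height vector $h$, a forest $F$ satisfies $\Delta_F\in\triang_h$ if and only if every partial matching $M\subset F$ is $h$-minimal among matchings with the same support pair $(I,J)$; this is exactly the system of inequalities cutting out a cone in the normal fan of $P_{I,J}$. Hence the secondary fan is \emph{literally} the common refinement, and no further argument is needed. In particular, the paper never invokes Proposition~\ref{prop:determined} or Lemma~\ref{lemma:LDp_bij}.

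Your route is more circuitous: you establish the correspondence $\{F_{I,J}(w)\}=\Matchings(\triang_w)$ only at generic $w$, and then appeal to Proposition~\ref{prop:determined} to conclude that equal matching data forces equal triangulations, hence equal maximal cones. This works, but the appeal to Proposition~\ref{prop:determined} is doing precisely the job that the paper's direct forest-face characterization already does for free in the regular setting. Your invocation of Lemma~\ref{lemma:LDp_bij}\eqref{item:bij:LDpRDp} is also unnecessary: once you know that $M\in\Matchings(\triang_w)$ iff $M$ is $w$-optimal for its support, both injectivity and surjectivity of $(I,J)\mapsto F_{I,J}(w)$ follow immediately. Two minor points: watch the sign convention (you say ``max-weight'' while the lower-face construction selects min-weight matchings; the fans match after $w\mapsto -w$), and the ``standard limiting argument'' is superfluous since two complete fans with the same maximal cones are equal.
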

\begin{proof}
  Let us consider an $m\times n$ matrix $h=(h_{ij})\in\R^G$ such that $h_{ij}=0$ when $(i,\bar j)\notin G$. It defines a \emph{regular subdivision} $\triang_h$ of $Q_G$ as follows. Let $e_0,e_1,\dots,e_m,e_{\bar1},\dots,e_{\bar n}$ be a basis of $\R^{m+n+1}$, and consider a polytope $Q_G(h)\subset\R^{m+n+1}$ defined as the convex hull of $e_i+e_{\bar j}+h_{ij}e_0$ for all $(i,\bar j)\in G$. Then $\triang_h$ is the subdivision of $Q_G$ obtained by projecting the lower faces of $Q_G(h)$ from $\R^{m+n+1}$ to $\R^{m+n}$. It is easy to see that $\Delta_F$ is contained in a face of $\triang_h$ for some forest $F\subset G$ if and only if for each partial matching $M\subset F$ and any other matching $M'\subset G$ with $\LeftSupport(M)=\LeftSupport(M')$, $\RightSupport(M)=\RightSupport(M')$, we have
  \begin{equation}\label{eq:regular}
 \sum_{(i,\bar j)\in M} h_{ij}\leq \sum_{(i,\bar j)\in M'}h_{ij}.
  \end{equation}
  If we fix $I:=\LeftSupport(M)$ and $J:=\RightSupport(M)$ then the set of all inequalities of the form~\eqref{eq:regular} describes exactly the normal fan of $\Newton(\Delta_{I,J}(f_G))$. Thus the normal fan of the secondary polytope of $Q_G$ is their common refinement. On the other hand, $N_G$ is the Minkowski sum of $\Newton(\Delta_{I,J}(f_G))$ over all $(I,J)\in\LRS_G$, and thus its normal fan is the common refinement of the normal fans of the summands.
\end{proof}

  The vertices of the secondary polytope of $Q_G$ correspond to \emph{coherent} (or \emph{regular}) triangulations of $Q_G$. Proposition~\ref{prop:nabla} implies that each such triangulation corresponds to a vertex of $N_G$. Note that $N_G$ is a Minkowski sum of Newton polytopes of $\Delta_{I,J}(f_G)$, thus a vertex of $N_G$ corresponds to choosing a vertex inside each summand, that is, for each pair $(I,J)\in\LRS_G$, we choose a single matching $M\subset G$ with $\LeftSupport(M)=I$ and $\RightSupport(M)=J$. Note that by Lemma~\ref{lemma:LDp_bij}, part~\eqref{item:bij:LDpRDp}, any triangulation of $Q_G$ (not necessarily a coherent one) corresponds to a choice of a single matching $M$ for each pair $(I,J)\in\LRS_G$.  In the case when $G=K_{m,n}$ is the complete bipartite graph, it was shown in~\cite{OYmatching} that any such collection of matchings satisfying a certain set of axioms (most notably, the \emph{linkage axiom} of~\cite{SZ,BZ}) equals $\Matchings(\triang)$ for some triangulation $\triang$ of $\DD$. In~\cite{SZ,BZ}, the authors considered a closely related object, namely the Newton polytope of the product of \emph{maximal} minors (as opposed to \emph{all} minors as we did in~\eqref{eq:Newton}) of $f_{K_{m,n}}$. It would be interesting to generalize the constructions of~\cite{OYmatching,SZ,BZ} to arbitrary subgraphs $G\subset K_{m,n}$.


\begin{figure}
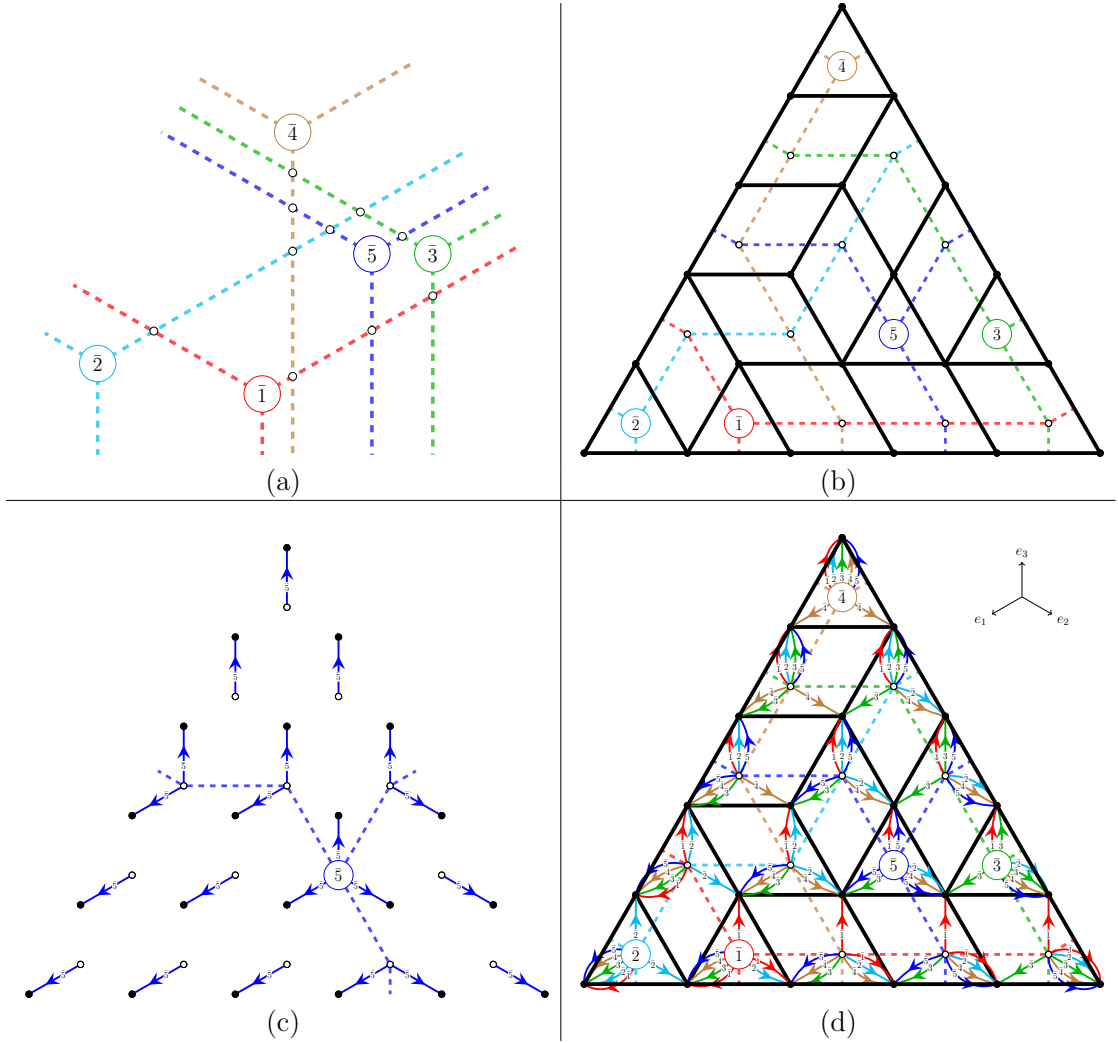

  \donthidefigure{

}
}
\caption{\label{fig:dtphs} The case $m=3$, $n=5$. (a) A tropical pseudoline arrangement. (b) The corresponding lozenge tiling of $n\Delta_{[m]}$. (c) The pseudoline $\bar 5$ together with the arrows $b\to a$ of $\SVTE$ for which $\bar5\in\svt(b\to a)$. (d) A tropical pseudoline arrangement, a lozenge tiling, and a trianguloid, all  corresponding to each other.}
\end{figure}

\def\tpsl{L}
Our first goal is to recast the notion of a \emph{tropical pseudohyperplane} for $m=3$ in elementary terms. See~\cite{DS,AD} for precise definitions for general $m$.

Suppose we are given three unit vectors $u_1$, $u_2$, and $u_3$ in $\R^2$ with $u_1+u_2+u_3=0$, and let $B\subset \R^2$ be the unit ball centered at the origin. Given a point $p\in B$, a \emph{tropical line} $\tpsl$ centered at $p$ is a union of three rays $r_1,r_2,r_3:\R_{\geq0}\to \R^2$ such that for each $i=1,2,3$, we have $r_i(t)=p-tu_i$ for all $t\geq0$. A \emph{tropical pseudoline} $\tpsl$ is an image of a tropical line under a piecewise-linear homeomorphism $\phi$ of $\R^2$ that fixes $\R^2\setminus B$. The image $\phi(p)$ is called the \emph{center} of $\tpsl$ and the piecewise-linear curves $\phi\circ r_i$ are called the \emph{legs} of $\tpsl$. See Figure~\ref{fig:trop_lines}.


We say that a family $\tpsl^\parr{\bar 1},\tpsl^\parr{\bar 2},\dots,\tpsl^\parr{\bar n}$ of tropical pseudolines form an \emph{arrangement} if any two of them intersect exactly once (and this intersection is transversal), the center of $\tpsl^\parr {\bar j}$ is not contained in $\tpsl^\parr {\bar k}$ for $\bar j\neq \bar k$, and no three of them intersect simultaneously at a single point. An arrangement of $5$ tropical pseudolines is shown in Figure~\ref{fig:dtphs}~(a). In this case, all of them are actual tropical lines.

\begin{remark}
We note that any arrangement of tropical lines yields a (very degenerate) \emph{honeycomb} in the sense of Knutson-Tao~\cite{KT}. Such special honeycombs provide a simple proof of the \emph{weak PRV conjecture}~\cite{PRV}, which was proven in full generality in~\cite{Kumar, Mathieu, Polo}. We refer the reader to~\cite[Section~4]{KT} for details.
\end{remark}

Since each pair of tropical pseudolines in an  arrangement must intersect exactly once, there are $n\choose 2$ points of intersection between them. Together with the $n$ centers, these $n+1\choose 2$ points can be uniquely mapped to the points in $\DZ(3,n-1)$ so that whenever two of them belong to a leg $r^\parr{\bar j}_i(t)$ of $\tpsl^\parr {\bar j}$, the one that is closer to the center of $\tpsl^\parr{\bar j}$ maps to a point in $\DZ(3,n-1)$ with a larger $i$-th coordinate. See Figure~\ref{fig:dtphs}~(b).

In fact, this gives a simple bijection between arrangements of $n$ tropical pseudolines and lozenge tilings of a \emph{holey triangle}. In the above setting, let $T_n$ be the convex hull of $nu_1,nu_2,nu_3$. A \emph{lozenge tiling} of $T_n$ is a subdivision of $T_n$ into $n\choose 2$ lozenges and $n$ \emph{upright triangles}. Here an upright triangle is the convex hull of $u_1,u_2,u_3$ (possibly shifted by some vector) and a \emph{lozenge} is a union of an upright triangle and its reflection about one of its sides. The lozenge tiling of $T_5$ corresponding to the above arrangement of $5$ tropical pseudolines is shown in solid black lines in Figure~\ref{fig:dtphs}~(b).

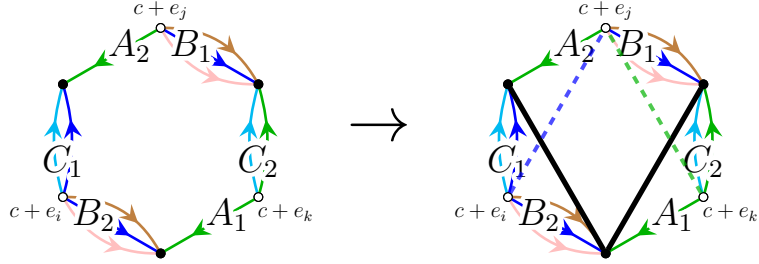
\begin{figure}

  \hidefigure{
\begin{tabular}{ccc}
\scalebox{1.0}{
\begin{tikzpicture}[baseline=(ZERO.base)]
\coordinate (ZERO) at (0:0.00);
\tikzset{myptr/.style={decoration={markings,mark=at position 0.7 with %
    {\arrow[scale=1.5,>=stealth]{>}}},postaction={decorate}}}
\coordinate (L3) at (90:1.50);
\coordinate (L2) at (330:1.50);
\coordinate (L1) at (210:1.50);
\coordinate (LL3) at (90:3.00);
\coordinate (LL2) at (330:3.00);
\coordinate (LL1) at (210:3.00);
\node[scale=0.3,draw,circle] (N0x0x1) at (barycentric cs:L1=0.0,L2=0.0,L3=0.5) { };
\node[scale=0.3,draw,circle] (N0x1x0) at (barycentric cs:L1=0.0,L2=0.5,L3=0.0) { };
\node[scale=0.3,draw,circle] (N1x0x0) at (barycentric cs:L1=0.5,L2=0.0,L3=0.0) { };
\node[scale=0.3,draw,circle,fill=black] (N0x1x1) at (barycentric cs:LL1=0.0,LL2=0.3333333333333333,LL3=0.3333333333333333) { };
\node[scale=0.3,draw,circle,fill=black] (N1x0x1) at (barycentric cs:LL1=0.3333333333333333,LL2=0.0,LL3=0.3333333333333333) { };
\node[scale=0.3,draw,circle,fill=black] (N1x1x0) at (barycentric cs:LL1=0.3333333333333333,LL2=0.3333333333333333,LL3=0.0) { };
\draw[draw=brown,myptr,line width=1pt] (N0x0x1) to[bend right=-30] (N0x1x1);
\draw[draw=blue,myptr,line width=1pt] (N0x0x1) to[bend right=0] (N0x1x1);
\draw[draw=pink,myptr,line width=1pt] (N0x0x1) to[bend right=30] (N0x1x1);
\draw[draw=none] (N0x0x1) to[] node[anchor=center, pos=0.3,scale=1.2] {\contour{white}{$B_1$}} (N0x1x1);
\draw[draw=cyan!80!white,myptr,line width=1pt] (N0x1x0) to[bend right=-15] (N0x1x1);
\draw[draw=green!70!black,myptr,line width=1pt] (N0x1x0) to[bend right=15] (N0x1x1);
\draw[draw=none] (N0x1x0) to[] node[anchor=center, pos=0.3,scale=1.2] {\contour{white}{$C_2$}} (N0x1x1);
\draw[draw=green!70!black,myptr,line width=1pt] (N0x0x1) to[bend right=0] (N1x0x1);
\draw[draw=none] (N0x0x1) to[] node[anchor=center, pos=0.3,scale=1.2] {\contour{white}{$A_2$}} (N1x0x1);
\draw[draw=cyan!80!white,myptr,line width=1pt] (N1x0x0) to[bend right=-15] (N1x0x1);
\draw[draw=blue,myptr,line width=1pt] (N1x0x0) to[bend right=15] (N1x0x1);
\draw[draw=none] (N1x0x0) to[] node[anchor=center, pos=0.3,scale=1.2] {\contour{white}{$C_1$}} (N1x0x1);
\draw[draw=green!70!black,myptr,line width=1pt] (N0x1x0) to[bend right=0] (N1x1x0);
\draw[draw=none] (N0x1x0) to[] node[anchor=center, pos=0.3,scale=1.2] {\contour{white}{$A_1$}} (N1x1x0);
\draw[draw=brown,myptr,line width=1pt] (N1x0x0) to[bend right=-30] (N1x1x0);
\draw[draw=blue,myptr,line width=1pt] (N1x0x0) to[bend right=0] (N1x1x0);
\draw[draw=pink,myptr,line width=1pt] (N1x0x0) to[bend right=30] (N1x1x0);
\draw[draw=none] (N1x0x0) to[] node[anchor=center, pos=0.3,scale=1.2] {\contour{white}{$B_2$}} (N1x1x0);
\node[scale=0.3,draw,circle] (N0x0x1) at (barycentric cs:L1=0.0,L2=0.0,L3=0.5) { };
\node[scale=0.3,draw,circle] (N0x1x0) at (barycentric cs:L1=0.0,L2=0.5,L3=0.0) { };
\node[scale=0.3,draw,circle] (N1x0x0) at (barycentric cs:L1=0.5,L2=0.0,L3=0.0) { };
\node[scale=0.3,draw,circle,fill=black] (N0x1x1) at (barycentric cs:LL1=0.0,LL2=0.3333333333333333,LL3=0.3333333333333333) { };
\node[scale=0.3,draw,circle,fill=black] (N1x0x1) at (barycentric cs:LL1=0.3333333333333333,LL2=0.0,LL3=0.3333333333333333) { };
\node[scale=0.3,draw,circle,fill=black] (N1x1x0) at (barycentric cs:LL1=0.3333333333333333,LL2=0.3333333333333333,LL3=0.0) { };
\node[anchor=30,scale=0.7] (BLAH) at (N1x0x0.center) {$c+e_i$};
\node[anchor=south,scale=0.7] (BLAH) at (N0x0x1.center) {$c+e_j$};
\node[anchor=150,scale=0.7] (BLAH) at (N0x1x0.center) {$c+e_k$};
\end{tikzpicture}}
&
\scalebox{2.0}{$\rightarrow$}
&
\scalebox{1.0}{
\begin{tikzpicture}[baseline=(ZERO.base)]
\coordinate (ZERO) at (0:0.00);
\tikzset{myptr/.style={decoration={markings,mark=at position 0.7 with %
    {\arrow[scale=1.5,>=stealth]{>}}},postaction={decorate}}}
\coordinate (L3) at (90:1.50);
\coordinate (L2) at (330:1.50);
\coordinate (L1) at (210:1.50);
\coordinate (LL3) at (90:3.00);
\coordinate (LL2) at (330:3.00);
\coordinate (LL1) at (210:3.00);
\node[scale=0.3,draw,circle] (N0x0x1) at (barycentric cs:L1=0.0,L2=0.0,L3=0.5) { };
\node[scale=0.3,draw,circle] (N0x1x0) at (barycentric cs:L1=0.0,L2=0.5,L3=0.0) { };
\node[scale=0.3,draw,circle] (N1x0x0) at (barycentric cs:L1=0.5,L2=0.0,L3=0.0) { };
\node[scale=0.3,draw,circle,fill=black] (N0x1x1) at (barycentric cs:LL1=0.0,LL2=0.3333333333333333,LL3=0.3333333333333333) { };
\node[scale=0.3,draw,circle,fill=black] (N1x0x1) at (barycentric cs:LL1=0.3333333333333333,LL2=0.0,LL3=0.3333333333333333) { };
\node[scale=0.3,draw,circle,fill=black] (N1x1x0) at (barycentric cs:LL1=0.3333333333333333,LL2=0.3333333333333333,LL3=0.0) { };
\draw[draw=brown,myptr,line width=1pt] (N0x0x1) to[bend right=-30] (N0x1x1);
\draw[draw=blue,myptr,line width=1pt] (N0x0x1) to[bend right=0] (N0x1x1);
\draw[draw=pink,myptr,line width=1pt] (N0x0x1) to[bend right=30] (N0x1x1);
\draw[draw=none] (N0x0x1) to[] node[anchor=center, pos=0.3,scale=1.2] {\contour{white}{$B_1$}} (N0x1x1);
\draw[draw=cyan!80!white,myptr,line width=1pt] (N0x1x0) to[bend right=-15] (N0x1x1);
\draw[draw=green!70!black,myptr,line width=1pt] (N0x1x0) to[bend right=15] (N0x1x1);
\draw[draw=none] (N0x1x0) to[] node[anchor=center, pos=0.3,scale=1.2] {\contour{white}{$C_2$}} (N0x1x1);
\draw[draw=green!70!black,myptr,line width=1pt] (N0x0x1) to[bend right=0] (N1x0x1);
\draw[draw=none] (N0x0x1) to[] node[anchor=center, pos=0.3,scale=1.2] {\contour{white}{$A_2$}} (N1x0x1);
\draw[draw=cyan!80!white,myptr,line width=1pt] (N1x0x0) to[bend right=-15] (N1x0x1);
\draw[draw=blue,myptr,line width=1pt] (N1x0x0) to[bend right=15] (N1x0x1);
\draw[draw=none] (N1x0x0) to[] node[anchor=center, pos=0.3,scale=1.2] {\contour{white}{$C_1$}} (N1x0x1);
\draw[draw=green!70!black,myptr,line width=1pt] (N0x1x0) to[bend right=0] (N1x1x0);
\draw[draw=none] (N0x1x0) to[] node[anchor=center, pos=0.3,scale=1.2] {\contour{white}{$A_1$}} (N1x1x0);
\draw[draw=brown,myptr,line width=1pt] (N1x0x0) to[bend right=-30] (N1x1x0);
\draw[draw=blue,myptr,line width=1pt] (N1x0x0) to[bend right=0] (N1x1x0);
\draw[draw=pink,myptr,line width=1pt] (N1x0x0) to[bend right=30] (N1x1x0);
\draw[draw=none] (N1x0x0) to[] node[anchor=center, pos=0.3,scale=1.2] {\contour{white}{$B_2$}} (N1x1x0);
\node[scale=0.3,draw,circle] (N0x0x1) at (barycentric cs:L1=0.0,L2=0.0,L3=0.5) { };
\node[scale=0.3,draw,circle] (N0x1x0) at (barycentric cs:L1=0.0,L2=0.5,L3=0.0) { };
\node[scale=0.3,draw,circle] (N1x0x0) at (barycentric cs:L1=0.5,L2=0.0,L3=0.0) { };
\node[scale=0.3,draw,circle,fill=black] (N0x1x1) at (barycentric cs:LL1=0.0,LL2=0.3333333333333333,LL3=0.3333333333333333) { };
\node[scale=0.3,draw,circle,fill=black] (N1x0x1) at (barycentric cs:LL1=0.3333333333333333,LL2=0.0,LL3=0.3333333333333333) { };
\node[scale=0.3,draw,circle,fill=black] (N1x1x0) at (barycentric cs:LL1=0.3333333333333333,LL2=0.3333333333333333,LL3=0.0) { };
\node[anchor=30,scale=0.7] (BLAH) at (N1x0x0.center) {$c+e_i$};
\node[anchor=south,scale=0.7] (BLAH) at (N0x0x1.center) {$c+e_j$};
\node[anchor=150,scale=0.7] (BLAH) at (N0x1x0.center) {$c+e_k$};
\draw[color=black, line width=2.0, opacity=1.0] (N0x1x1) -- (N1x1x0);
\draw[color=black, line width=2.0, opacity=1.0] (N1x0x1) -- (N1x1x0);
\draw[color=blue, line width=1.5, dashed, opacity=0.7] (N1x0x0) -- (N0x0x1);
\draw[color=green!70!black, line width=1.5, dashed, opacity=0.7] (N0x1x0) -- (N0x0x1);
\end{tikzpicture}}
\\

\end{tabular}
}
\caption{\label{fig:hexagon_convert} Reconstructing a lozenge tiling and a tropical pseudoline arrangement from a trianguloid. If $C_1\neq C_2$ then the segments of the lozenge tiling connect $c+e_i+e_k$ to $c+e_i+e_j$ and to $c+e_k+e_j$. The points $c+e_i$ and $c+e_j$ are connected by a segment (dashed) of the tropical pseudoline $L^\parr{\bar x}$, where $\{\bar x\}=C_1\setminus C_2$. Similarly, the points $c+e_k$ and $c+e_j$ are connected by a segment of the tropical pseudoline $L^\parr{\bar y}$, where $\{\bar y\}=C_2\setminus C_1$.}
\end{figure}

Tropical pseudoline arrangements and lozenge tilings of holey triangles are special cases for $m=3$ of \emph{tropical pseudohyperplane arrangements}~\cite{AD,Horn} and \emph{fine mixed subdivisions of $n\Delta_{[m]}$}~\cite{SantosCayley, HRS}, respectively. 

Let us now explain a way of constructing a trianguloid from these objects. Fix $\bar j\in[\bar n]$ and consider the image of $\tpsl^\parr{\bar j}$ in $(n-1)\Delta_{[3]}$, see Figure~\ref{fig:dtphs}~(c). Its complement in $(n-1)\Delta_{[3]}$ consists of three connected components. For $i=1,2,3$, we denote by $C^\parr{\bar j}_i\subset (n-1)\Delta_{[3]}$ the closure of the connected component that contains the vertex $(n-1)e_i$. Each point of $\DZ(3,n-1)$ now belongs to $C^\parr{\bar j}_i$ for one or several values of $i$. We then define a trianguloid $\svt$ by the condition that $\bar j$ belongs to $\svt(b\toidot)$ for an edge $(b\toidot)\in\SVTE$ whenever $b$ belongs to $C^\parr{\bar j}_i$. Thus for example if $b$ is the image of the center of $\tpsl^\parr{\bar j}$ then it belongs to $C^\parr{\bar j}_i$ for all $i=1,2,3$. It is easy to see that thus defined map $\svt:\SVTE\to 2^{[\bar n]}$ is indeed a trianguloid, and the corresponding collection of spanning trees of $K_{3,n}$ yields a fine mixed subdivision of $n\Delta_{[3]}$ that coincides with the lozenge tiling of $T_n$ described above.

A way of describing the inverse correspondence can be given using Axiom~\axHexagonK. Namely let $c\in\DZ(3,n-2)$ be a point and let $i,j,k$ be three indices with $\{i,j,k\}=\{1,2,3\}$ so that
\[\svt( c+e_i\tokdot)=\svt(c+e_j\tokdot )\quad \text{and}\quad \svt( c+e_j\toidot )=\svt( c+e_k\toidot ).\]
Then it is easy to see that we in fact must have $\svt(c+e_i\tojdot)\neq \svt(c+e_k\tojdot)$, i.e., the converse to Axiom~\axHexagonK holds for $m=3$. Indeed, otherwise the three sets $\svt(c+e_i\tojdot)=\svt(c+e_k\tojdot)$, $\svt( c+e_i\tokdot)=\svt(c+e_j\tokdot )$, $\svt( c+e_j\toidot )=\svt( c+e_k\toidot )$  would be pairwise disjoint (see Remark~\ref{rmk:set_partitionK}), so their union would have  size $c_i+c_j+c_k+3=n+1$ which is impossible for a subset of $[\bar n]$. Now, let us connect $c+e_i+e_k$ with $c+e_i+e_j$ and with $c+e_j+e_k$ using solid black lines, see Figure~\ref{fig:hexagon_convert}. We claim that the union of these solid black lines over all hexagons, together with the boundary of $n\Delta_{[3]}$, yields the lozenge tiling of a holey triangle corresponding to $\triang_\svt$. Similarly, denote $\bar x$ to be the unique element of $\svt(c+e_i\tojdot)\setminus \svt(c+e_k\tojdot)$ and $\bar y$ to be the unique element of $\svt(c+e_k\tojdot)\setminus \svt(c+e_i\tojdot)$. Then connect $c+e_i$ with $c+e_j$ using a dashed line labeled $\bar x$ and connect $c+e_k$ with $c+e_j$ using a dashed line labeled $\bar y$, as in Figure~\ref{fig:hexagon_convert}. We claim that the union of these dashed lines over all hexagons yields the tropical pseudoline arrangement that corresponds to $\svt$. We encourage the reader to examine the hexagons of this form in Figure~\ref{fig:dtphs}~(d), which is the superposition of a trianguloid, a tropical pseudoline arrangement, and a lozenge tiling, all corresponding to the same triangulation of $\Delta_{[3]}\times\Delta_{[\bar 5]}$.

\section{Main results: the case of arbitrary $G$}\label{sec:main}
We extend the results of Section~\ref{sec:main_K} to  arbitrary
connected subgraphs $G\subset K_{m,n}$.


Define a directed graph $\SVTG$ with vertex set $(P_G\sqcup \PGpm)\cap \Z^m$ and edge set $\SVTGE:=\{a-e_i\to a\mid a\in P_G\cap \Z^m, i\in[m]: a_i>0\}$. Note that by the definition of $\PGpm$, we have $a-e_i\in \PGpm\cap \Z^m$. We again abbreviate the edge $a-e_i\to a$ as either $a-e_i\toidot$ or $\dottoi a$.

\begin{figure}

\hidefigure{


\\

\end{tabular}
}
\caption{\label{fig:trianguloid_G} A graph $G\subset K_{m,n}$ for $m=3,n=5$ and a collection $\RM(\triang)$ of right semi-matchings for some triangulation $\triang$ of $Q_G$ (left). The corresponding trianguloid $\svt_\triang$ (right). Each black vertex $a\in P_G\cap\Z^m$ of $\svt$ corresponds to a unique right semi-matching $F_\triang(a)\in\RM(\triang)$ such that $\LDp(F)=a$. In this case, the incoming arrows of $a$ in $\svt$ in the direction of $e_i$ are labeled by the neighbors of $i$ in $F_\triang(a)$. The white vertices of $\svt$ are the lattice points of $\PGpm$.}
\end{figure}

\begin{definition}\label{dfn:pre_trianguloid}
  A \emph{pre-trianguloid} is a map $\svt: \SVTGE\to 2^{[\bar n]}$ satisfying the following axioms:
  \begin{enumerate}
  \item[(T1')]\label{ax:cardinality} for every edge $(\dottoi a) \in\SVTGE$, we have $|\svt(\dottoi a)|=a_i$.
  \item[(T2')]\label{ax:existence} for each $a\in P_G\cap \Z^m$ and $\bar j\in [\bar n]$, there exists an index $i\in N_{\bar j}(G)$ such that $\bar j\in \svt(\dottoi a )$.
  \item[(T3')]\label{ax:containment} If both $a$ and $a':=a+e_i-e_j$ belong to $P_G\cap \Z^m$ and $a_i>0$ then
    \[\svt(\dottoi a )\subset \svt(\dottoi  a').\]
  \end{enumerate}
\end{definition}

It is clear that Remark~\ref{rmk:set_partitionK} generalizes to the case of arbitrary $G$. We also note that if $a_i=0$ for some $a\in P_G\cap\Z^m$ and $i\in[m]$ then there is no edge $\dottoi a$ in $\SVTGE$ because $a-e_i\notin\PGpm$. However, in this case Axiom~\axCardinality would require $\svt(\dottoi a)$ to have zero cardinality, and in fact setting $\svt(\dottoi a):=\emptyset$ for all such pairs of $a$ and $i$ does not have any effect on our arguments.

Similarly to the case $G=K_{m,n}$, for any triangulation $\triang$ of $Q_G$ and any $b\in P_G^-\cap \Z^m$, there is a unique tree $T_\triang(b)\in\Trees(\triang)$ such that $\LDm(T_\triang(b))=b$, so we can define $\svt_\triang(b\toidot)$ by~\eqref{eq:svt_triang_K}. However, this does not define $\svt_\triang$ on all $\SVTGE$ because some edges of $\SVTG$ are of the form $b\to a$ for $b\in \PGpm\setminus P_G^-$. Instead, we use right semi-matchings from Definition~\ref{dfn:RSM}.

Recall that by Lemma~\ref{lemma:LDp_bij}, part~\eqref{item:bij:LDp}, $\LDp$ is a bijection between the set $\RM(\triang)$ of right semi-matchings  of $\triang$ and the set $P_G\cap\Z^m$ of lattice points of $P_G$. Denote by $F_\triang:P_G\cap \Z^m\to \RM(\triang)$ the inverse of this bijection. Given a triangulation $\triang$ of $Q_G$, define a map $\svt_\triang:\SVTGE\to 2^{[\bar n]}$ by
\begin{equation}\label{eq:svt_triang}
\svt_\triang(\dottoi a)=N_{i}(F_\triang(a))
\end{equation}
 for all $a\in P_G\cap \Z^m$ and all $i\in [m]$ such that $a_i>0$.

 We have the analog of Proposition~\ref{prop:triang_implies_svt_K}.
\begin{proposition}\label{prop:triang_implies_svt}
  If $\triang$ is a triangulation of $Q_G$ then $\svt_\triang$ is a pre-trianguloid.
\end{proposition}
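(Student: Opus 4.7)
The plan is to verify the three axioms \axCardinality, \axExistence, \axContainment in turn. The first two are immediate consequences of the definition of a right semi-matching. For \axCardinality, since $F_\triang(a)$ is a right semi-matching with $\LDp(F_\triang(a))=a$, we have $|N_i(F_\triang(a))|=\deg_i(F_\triang(a))=a_i$. For \axExistence, every right vertex $\bar j\in[\bar n]$ has degree $1$ in $F_\triang(a)$, so there is a unique $i\in N_{\bar j}(G)$ with $(i,\bar j)\in F_\triang(a)$, giving $\bar j\in N_i(F_\triang(a))=\svt_\triang(\dottoi a)$.

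The substantive content lies in \axContainment. Fix $a$ and $a'=a+e_i-e_j$ in $P_G\cap\Z^m$ with $a_i>0$, and write $F:=F_\triang(a)$, $F':=F_\triang(a')$. The goal is to show $N_i(F)\subseteq N_i(F')$. Both $\Delta_F$ and $\Delta_{F'}$ lie in $\triang$, so by (the forthcoming) Lemma~\ref{lemma:compatible}, the forests $F$ and $F'$ are compatible in the sense of Definition~\ref{dfn:compatible:intro}. The plan is to show that failure of the containment produces an incompatibility, contradicting this fact. The main obstacle is to extract the right combinatorial gadget from $F\cup F'$ that witnesses the incompatibility.

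Encode the semi-matchings by the maps $\sigma,\sigma':[\bar n]\to[m]$ sending $\bar j$ to the unique left neighbor in $F$, respectively $F'$. Build an auxiliary directed multigraph $D$ on $[m]$ by drawing, for each $\bar j$ with $\sigma(\bar j)\neq\sigma'(\bar j)$, an edge $\sigma(\bar j)\to\sigma'(\bar j)$ labeled $\bar j$. A direct count gives
\[\operatorname{indeg}_D(k)-\operatorname{outdeg}_D(k)=|N_k(F')|-|N_k(F)|=a'_k-a_k,\]
which equals $+1$ at $k=i$, $-1$ at $k=j$, and $0$ otherwise. Hence $D$ decomposes edge-disjointly into a single directed path from $j$ to $i$ together with directed cycles. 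Any $\bar j_0\in N_i(F)\setminus N_i(F')$ contributes an out-edge of $i$ in $D$, which cannot belong to the unique $j\to i$ path (since $i$ is its terminus), so it must lie on a simple directed cycle $C=(v_1\to v_2\to\cdots\to v_k\to v_1)$ through $i$, with labels $\bar j_1,\dots,\bar j_k$ and $v_1,\dots,v_k$ distinct.

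Such a cycle yields two partial matchings
\[M=\{(v_\ell,\bar j_\ell)\mid 1\le\ell\le k\}\subset F,\quad M'=\{(v_{\ell+1},\bar j_\ell)\mid 1\le\ell\le k\}\subset F'\]
(with indices modulo $k$). Both have the same left-degree vector (each $v_\ell$ has degree $1$) and right-degree vector (each $\bar j_\ell$ has degree $1$), yet $M\neq M'$ because the edge at $\bar j_1$ has left endpoint $v_1$ in $M$ and $v_2\neq v_1$ in $M'$. This contradicts the compatibility of $F$ and $F'$, so $N_i(F)\subseteq N_i(F')$, establishing \axContainment.
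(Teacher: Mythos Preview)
Your proof is correct and follows essentially the same approach as the paper. For \axContainment the paper works directly with the bipartite digraph $U(F,F')$ of Lemma~\ref{lemma:compatible} (with common edges removed) and observes that if $N_i(F)\not\subset N_i(F')$ then every vertex with an incoming edge also has an outgoing one, forcing a directed cycle; your contracted graph $D$ on $[m]$ and the explicit extraction of the matchings $M,M'$ amount to the same degree-counting argument phrased on the left vertex set.
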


It may seem that the definition~\eqref{eq:svt_triang} of $\svt_\triang$ for the case of arbitrary $G$ differs from the corresponding definition~\eqref{eq:svt_triang_K} for the case of $G=K_{m,n}$. The next lemma shows that this is not the case.
\begin{lemma}\label{lemma:two_definitions_agree}
  Let $\triang$ be a triangulation of $Q_G$ and define $\svt:=\svt_\triang$ by~\eqref{eq:svt_triang}. For $b\in P_G^-$, the collection $T_\svt(b)$ of edges given by~\eqref{eq:svt_triang_K} is the unique spanning tree of $G$ satisfying $\LDm(T_\svt(b))=b$ and $\Delta_{T_\svt(b)}\in\triang$.
\end{lemma}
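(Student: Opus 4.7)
The plan is to reduce the lemma to an equality of left-neighborhoods. Let $T := T_\triang(b) \in \Trees(\triang)$ be the unique spanning tree with $\LDm(T) = b$, which exists by Lemma~\ref{lemma:LDp_bij}\eqref{item:bij:LD}. Combining~\eqref{eq:svt_triang_K} with~\eqref{eq:svt_triang}, the edge set produced by~\eqref{eq:svt_triang_K} is
\[
T_\svt(b) \;=\; \bigcup_{i \in [m]} \{(i, \bar j) \mid \bar j \in N_i(F_\triang(b+e_i))\}.
\]
Thus it suffices to show, for every $i \in [m]$, that $N_i(F_\triang(b+e_i)) = N_i(T)$: from this we immediately obtain $T_\svt(b) = T = T_\triang(b)$, which by Lemma~\ref{lemma:LDp_bij}\eqref{item:bij:LD} is the unique spanning tree of $G$ in $\triang$ with left-degree vector $b$.

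The heart of the argument is to construct a right semi-matching $F^* \subset T$ with $\LDp(F^*) = b + e_i$. I will use the classical parent-map trick: root the spanning tree $T$ at the left vertex $i$, and for each $\bar j \in [\bar n]$ let $\phi(\bar j) \in [m]$ denote the parent of $\bar j$ in this rooted tree (the unique $T$-neighbor of $\bar j$ on the path from $\bar j$ to $i$). Set $F^* := \{(\phi(\bar j), \bar j) \mid \bar j \in [\bar n]\}$. Every $\bar j$ has exactly one edge in $F^*$, so $F^*$ is a right semi-matching; for the left-degrees, $\deg_i(F^*) = |N_i(T)| = b_i + 1$ since every neighbor of $i$ in $T$ has $i$ as its parent, and for $k \neq i$ the quantity $\deg_k(F^*)$ counts the children of $k$ in the rooted tree, giving $\deg_k(T) - 1 = b_k$. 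Hence $\LDp(F^*) = b + e_i$.

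Since $F^* \subset T$ and $T \in \Forests(\triang)$, the right semi-matching $F^*$ automatically lies in $\RM(\triang)$. By the uniqueness asserted in Lemma~\ref{lemma:LDp_bij}\eqref{item:bij:LDp}, $F^* = F_\triang(b+e_i)$. Consequently $F_\triang(b+e_i) \subset T$, and since both have $b_i + 1$ edges at $i$, their neighborhoods at $i$ coincide: $N_i(F_\triang(b+e_i)) = N_i(T)$. This completes the reduction, and hence the lemma. The only conceptually subtle point is recognizing that $F_\triang(b+e_i)$ can be identified simply by \emph{exhibiting} a right semi-matching in $\RM(\triang)$ with the correct $\LDp$ and appealing to Lemma~\ref{lemma:LDp_bij}\eqref{item:bij:LDp}, rather than by directly analyzing its intersection with $T$ via some compatibility/cycle argument; once this reduction is in place, the rooting construction is entirely elementary.
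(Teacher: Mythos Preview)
Your proof is correct but proceeds differently from the paper. The paper deduces this lemma from Lemma~\ref{lemma:tree_forest}, which shows $N_i(F_\triang(b+e_i))=N_i(T_\triang(b))$ by contradiction: assuming the neighborhoods differ, one chooses for every $i'\in[m]$ an element $\bar q(i')\in N_{i'}(T)\setminus N_{i'}(F)$ and observes that the directed subgraph $\{i'\to\bar q(i')\}\cup\{\bar j\to i':(i',\bar j)\in F\}$ of $U(T,F)$ has all outdegrees equal to $1$ and no $2$-cycles, forcing a long directed cycle and contradicting compatibility. You instead \emph{exhibit} the right semi-matching $F_\triang(b+e_i)$ directly as the parent map of $T$ rooted at $i$, and then invoke the injectivity of $\LDp$ on $\RM(\triang)$ from Lemma~\ref{lemma:LDp_bij}\eqref{item:bij:LDp}. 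Your argument is more constructive and in fact proves the stronger inclusion $F_\triang(b+e_i)\subset T_\triang(b)$; the paper's cycle argument, while less explicit here, is the template it reuses in the proofs of Axioms~\axContainment and~\axHexagon (Lemmas~\ref{lemma:axContainment} and~\ref{lemma:axHexagon}), where no analogous direct construction is available.
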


Thus in the case $G=K_{m,n}$, the two definitions~\eqref{eq:svt_triang_K} and~\eqref{eq:svt_triang} of $\svt_\triang$ agree with each other.

\begin{proposition}\label{prop:svt_implies_trees}
For a pre-trianguloid $\svt$ and a point $b\in P_G^-\cap \Z^m$, $T_\svt(b)$ is a spanning tree of $G$.
\end{proposition}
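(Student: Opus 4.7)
The plan is to establish four properties of $T_\svt(b)$: (i) $T_\svt(b) \subset G$; (ii) $|T_\svt(b)| = m+n-1$; (iii) every vertex of $G$ has positive degree in $T_\svt(b)$; (iv) $T_\svt(b)$ is connected. Together, a spanning connected subgraph of $G$ with $m+n-1$ edges on $m+n$ vertices is automatically a spanning tree.

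As a preliminary I would prove that for each $a \in P_G \cap \Z^m$ the sets $\svt(a - e_k \to a)$, ranging over $k\in [m]$ with $a_k > 0$, partition $[\bar n]$, and each is contained in $\{\bar j : (k, \bar j)\in G\}$. Indeed, (T1') gives $\sum_k |\svt(a - e_k\to a)| = \sum_k a_k = n$ (every point of $P_G$ has coordinate sum $n$), while (T2') forces the union to cover $[\bar n]$; equating cardinalities forces disjointness, and the unique witness index for each $\bar j$ must lie in $N_{\bar j}(G)$. This immediately yields (i). Writing $S_i := \svt(b \to b+e_i)$, (T1') together with $\sum_i b_i = n-1$ gives (ii). For (iii), the $i$-side is immediate from $\deg_i = b_i + 1 \geq 1$. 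For a right vertex $\bar j$, fix any $i_0\in [m]$ and set $a = b + e_{i_0}$; by (T2') there is $i \in N_{\bar j}(G)$ with $a_i > 0$ and $\bar j \in \svt(a - e_i\to a)$. If $i = i_0$ then $\bar j \in S_{i_0}$; otherwise $b_i > 0$, and (T3') applied with shift direction $e_i - e_{i_0}$ gives $\svt(a - e_i\to a) \subset \svt(b\to b+e_i) = S_i$, placing $\bar j \in S_i$.

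The main obstacle is (iv). Suppose $T_\svt(b)$ has at least two connected components; let $C_1$ be one of them, $C_2$ the union of the rest, and set $L_r := C_r \cap [m]$, $R_r := C_r \cap [\bar n]$. By (iii) all four of $L_1, L_2, R_1, R_2$ are nonempty. For any $i \in L_1$, I would partition $[\bar n]$ at $a = b+e_i$ and observe that $S_i \subset R_1$ (edges from $i$ go to $R_1$), while each class $\svt(b+e_i - e_k\to b+e_i)$ with $k\in [m]\setminus\{i\}$ and $b_k > 0$ is contained in $S_k$ by (T3'), hence lies in the component of $k$. Collecting precisely the classes that land inside $R_1$ yields
\[|R_1| = (b_i + 1) + \sum_{k \in L_1\setminus\{i\}} b_k = 1 + \sum_{k\in L_1} b_k.\]
Re-running the same bookkeeping from a base point $i'\in L_2$, the class $S_{i'}$ no longer contributes to $R_1$ (it sits in $R_2$), and we obtain
\[|R_1| = \sum_{k \in L_1} b_k.\]
Comparing the two expressions yields $0 = 1$, a contradiction. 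Hence $T_\svt(b)$ is connected, completing the proof that it is a spanning tree of $G$.
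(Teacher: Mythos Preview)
Your proof is correct, but it follows a genuinely different route from the paper's. Both arguments start from the edge count $|T_\svt(b)|=m+n-1$ obtained from Axiom~\axCardinality. The paper then proves that $T_\svt(b)$ is \emph{acyclic}: it develops a path lemma (Lemma~\ref{lemma:path}) showing that along any path $\bar j_1,i_1,\dots,i_r$ in $T_\svt(b)$ one has $\bar j_1\in\svt(\dotto{i_1} b+e_{i_r})$, and a hypothetical cycle then puts $\bar j_1$ into two distinct blocks of the partition at $b+e_{i_r}$, contradicting Remark~\ref{rmk:set_partitionK}. You instead prove \emph{connectedness} directly: using the partition of $[\bar n]$ at $b+e_i$ and the containment $\svt(\dottok b+e_i)\subset S_k$ from Axiom~\axContainment, you obtain the identity $|R_1|=1+\sum_{k\in L_1}b_k$ when the base vertex $i$ lies in $L_1$, versus $|R_1|=\sum_{k\in L_1}b_k$ when it lies in $L_2$. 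Your argument is pleasantly self-contained, using only the pre-trianguloid axioms and the partition observation; the paper's path lemma, on the other hand, is a tool reused in several subsequent proofs (Lemmas~\ref{lemma:N_bar_j}, \ref{lemma:are_compatible}, \ref{lemma:replaceable}), so establishing it here pays dividends later.
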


To generalize the definition of a trianguloid to the case of an arbitrary $G$, we slightly modify Axiom~\axHexagonK for points on the boundary of $P_G^-$.

\begin{definition}
  A \emph{trianguloid} is a pre-trianguloid $\svt: \SVTGE\to 2^{[\bar n]}$ satisfying the following \emph{Hexagon axiom}:
  \begin{enumerate}
\item[(T4')] \label{ax:hexagon} let  $c\in \Z^m$ and consider three distinct indices $i,j,k\in [m]$ such that $c+e_i,c+e_k\in P_G^-$ and $\svt(c+e_i\tojdot)\neq \svt(c+e_k\tojdot )$. Then we have $c+e_j\in P_G^-$ and
 \[\svt( c+e_i\tokdot)=\svt(c+e_j\tokdot )\quad \text{and}\quad \svt( c+e_j\toidot )=\svt( c+e_k\toidot ).\]
\end{enumerate}
\end{definition}

We are ready to state our main result:

\begin{theorem}\label{thm:svt_implies_triang}
  The map $\triang\mapsto\svt_\triang$ is a bijection between triangulations of $Q_G$ and trianguloids.
\end{theorem}

Theorem~\ref{thm:bijections_different_K} also generalizes to the case of arbitrary $G$. 

\begin{theorem}\label{thm:bijections_different}
  For two different triangulations $\triang,\triang'$ of $Q_G$, the maps $\phi_{\triang}, \phi_{\triang'}$ are different as well.
\end{theorem}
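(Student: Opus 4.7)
The plan is to leverage the bijection from Theorem~\ref{thm:svt_implies_triang} between triangulations of $Q_G$ and trianguloids. It suffices to show that the map $\svt\mapsto\phi_\svt$ from trianguloids to bijections $P_G^-\cap\Z^m\to P_{G^*}^-\cap\Z^n$ is injective, where we set
\[
\phi_\svt(b)_{\bar j}:=\bigl|\{i\in[m]:\bar j\in\svt(b\toidot)\}\bigr|-1;
\]
granting this, $\phi_\triang=\phi_{\triang'}$ forces $\svt_\triang=\svt_{\triang'}$, and hence $\triang=\triang'$. The identity $\phi_{\svt_\triang}=\phi_\triang$ follows from Lemma~\ref{lemma:two_definitions_agree}: since $\svt_\triang(b\toidot)=N_i(T_\triang(b))$, the quantity above equals $\deg_{\bar j}(T_\triang(b))-1=\RDm(T_\triang(b))_{\bar j}=\phi_\triang(b)_{\bar j}$.

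To prove injectivity, suppose $\svt,\svt'$ are trianguloids with $\phi_\svt=\phi_{\svt'}$. For each $b\in P_G^-\cap\Z^m$, consider the $0/1$-matrices $M_b=(\mathbf{1}[\bar j\in\svt(b\toidot)])_{i,\bar j}$ and the analogous $M_b'$ for $\svt'$. Axiom~\axCardinality forces the row sums of $M_b$ and $M_b'$ to agree (both equal $b_i+1$), and the hypothesis $\phi_\svt=\phi_{\svt'}$ forces their column sums to agree. I would establish $M_b=M_b'$ for every $b$ by induction along a linear extension $\prec$ of a suitable partial order on $P_G^-\cap\Z^m$ (for instance, coordinatewise dominance). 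The base case treats an extremal lattice point of $P_G^-$; in the case $G=K_{m,n}$ one can take $b=(n-1)e_{i_0}$, where the extreme row sums alone determine $M_b$ from the column sums. For the inductive step at $b=c+e_k$, Axiom~\axHexagon applied to a triple $(i,j,k)$ of distinct directions yields
\[
\svt(c+e_i\tokdot)=\svt(c+e_j\tokdot)\quad\text{and}\quad \svt(c+e_j\toidot)=\svt(c+e_k\toidot)
\]
whenever $\svt(c+e_i\tojdot)\neq\svt(c+e_k\tojdot)$; combined with the monotonicity from Axiom~\axContainment and the induction hypothesis at the smaller lattice points $c+e_i,c+e_j$, these relations pin down $M_b$. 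The remaining edges of $\SVTGE$ with source in $\PGpm\setminus P_G^-$ are handled by the analogous reconstruction of right semi-matchings $F_\triang(a)$, using Lemma~\ref{lemma:LDp_bij} part~(3), which characterizes each $F_\triang(a)$ uniquely by its left-degree vector inside the already-recovered tree containing it.

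The main obstacle is the inductive step: in general a bipartite graph is not determined by its bi-degree sequence, so the Hexagon axiom is indispensable, and one must choose $\prec$ so that the Hexagon-based reconstruction propagates to every lattice point of $P_G^-\cap\Z^m$. This is routine when $G=K_{m,n}$ (where $P_G^-$ is a regular simplex), but for general $G\subsetneq K_{m,n}$ the trimmed polytope $P_G^-$ can have a complicated shape, and one must handle boundary lattice points where some premises of~\axHexagon fail by invoking the geometry of $P_G^-$ directly.
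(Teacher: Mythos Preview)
Your proposal contains a genuine gap rather than a different valid route.

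\textbf{The base case fails.} At $b=(n-1)e_{i_0}$ (for $G=K_{m,n}$), the row sums force $\svt(b\too{i_0}\dotDown)=[\bar n]$ and $|\svt(b\toidot)|=1$ for $i\neq i_0$, but the column sums only tell you, for each $\bar j$, \emph{how many} of the singleton rows hit $\bar j$, not \emph{which} ones. Concretely, with $m=3$, $n=2$, $b=(1,0,0)$: the trees $\{(1,\bar1),(1,\bar2),(2,\bar1),(3,\bar2)\}$ and $\{(1,\bar1),(1,\bar2),(2,\bar2),(3,\bar1)\}$ have identical bi-degree sequences. So the assertion that ``extreme row sums alone determine $M_b$ from the column sums'' is false; the matrix $M_b$ is not reconstructible from local data at $b$ alone.

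\textbf{The inductive structure is not well-posed.} All points of $P_G^-\cap\Z^m$ have coordinate sum $n-1$, so no two distinct points are comparable under coordinatewise dominance; your suggested partial order is the discrete one. The Hexagon axiom at $b=c+e_k$ relates $\svt$ at $c+e_i,c+e_j,c+e_k$, but none of these is ``smaller'' than another in any natural sense, so it is unclear which of them the induction hypothesis is supposed to cover. You note this yourself at the end, but do not resolve it; as written, the argument is a plan that lacks its key ingredient.

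\textbf{How the paper proceeds.} The paper does not attempt a direct reconstruction of $\svt$ from $\phi_\svt$. Instead it fixes a single index $\bar u$ at which $T_\svt(b)$ and $T_{\svt'}(b)$ differ for some $b$, and works with the sets $A,A'$ of edges of $\Gamma_G$ carrying colour $\bar u$ in $\svt,\svt'$ respectively. By Remark~\ref{rmk:set_partitionK} and Lemma~\ref{lemma:N_bar_j}, each $a\in P_G$ in the common range is hit by exactly one edge of $A$ and exactly one of $A'$. One then picks $b^{(1)}$ maximizing $|N_{\bar u}(T_\svt(b))\setminus N_{\bar u}(T_{\svt'}(b))|$, fixes $i$ in that set, and uses a transfer lemma (a two-step application of Axiom~\axHexagon) to produce $b^{(2)}=b^{(1)}+e_i-e_{j^{(1)}}$ with the \emph{same} symmetric-difference sets. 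Iterating yields $b^{(1)},b^{(2)},\dots$ all in $P_G^-$ with strictly increasing $i$-th coordinate, a contradiction. The argument is global and extremal rather than inductive, and never needs to pin down $M_b$ at any single point.
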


\section{From triangulations to trianguloids}\label{sec:triang_implies_svt}
In this section, we show that for any triangulation $\triang$ of $Q_G$, the map $\svt_\triang:\SVTGE\to 2^{[\bar n]}$ given by~\eqref{eq:svt_triang} is a trianguloid. We work in the generality of arbitrary connected $G\subset K_{m,n}$. Before we proceed, we need to show that the map $F_\triang$ used in~\eqref{eq:svt_triang} is well defined, thus we begin by showing Lemma~\ref{lemma:LDp_bij}.


First, we discuss the compatibility condition of~\cite{Postnikov}. Given two forests $F,F'\subset G$, let $U(F,F')$ be a \emph{directed} graph with edge set
\[\{i\to \bar j\mid (i,\bar j)\in F\}\cup \{\bar j\to i\mid (i,\bar j)\in F'\}.\]
The following result generalizes Lemma~\ref{lemma:compatible:intro} and completes the proof of Proposition~\ref{prop:determined}.
\begin{lemma}\label{lemma:compatible}
  Given two forests $F,F'\subset G$, the following conditions are equivalent:
\begin{enumerate}[\normalfont(1)]
\item\label{compat:1}   the simplices $\Delta_F$, $\Delta_{F'}$ intersect by their common face;
\item\label{compat:2} the forests $F$ and $F'$ are compatible in the sense of Definition~\ref{dfn:compatible:intro};
\item\label{compat:4}  $U(F,F')$ contains no directed cycles of length $3$ or more.
\end{enumerate}
\end{lemma}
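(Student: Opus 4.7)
The plan is to establish the cyclic equivalence $(1) \Leftrightarrow (2) \Leftrightarrow (3)$, with $(2) \Leftrightarrow (3)$ being essentially a combinatorial reformulation and the links to $(1)$ coming from the uniqueness of barycentric coordinates in a simplex.

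For $(2) \Leftrightarrow (3)$, I would use the bijection between pairs $(M, M')$ of matchings with the same degree sequence and closed walks in $U(F, F')$. Given $M \subset F$ and $M' \subset F'$ with $\deg_v(M) = \deg_v(M')$ for all vertices $v$, view $M \cup M'$ as a directed subgraph of $U(F,F')$ (edges of $M$ oriented $i \to \bar j$, edges of $M'$ oriented $\bar j \to i$). Since $M, M'$ are matchings with matching degrees, every vertex has equal in- and out-degree, so $M \cup M'$ is Eulerian and decomposes into directed cycles. Each $2$-cycle corresponds to an edge lying in $M \cap M'$, so $M = M'$ if and only if every cycle in the decomposition has length $2$. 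Thus $M \neq M'$ forces a cycle of length $\geq 3$ in $U(F,F')$. Conversely, a directed cycle $i_1 \to \bar j_1 \to i_2 \to \cdots \to i_k \to \bar j_k \to i_1$ of length $2k \geq 4$ yields matchings $M = \{(i_s, \bar j_s)\}_{s=1}^k \subset F$ and $M' = \{(i_{s+1}, \bar j_s)\}_{s=1}^k \subset F'$ (indices mod $k$) that share a degree sequence but differ as edge sets (since the $i$'s are distinct in a simple cycle).

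For $(1) \Rightarrow (2)$, I would argue the contrapositive: given $M \neq M'$ with identical degree sequences, the point $p = \frac{1}{|M|} \sum_{(i,\bar j)\in M}(e_i - e_{\bar j}) = \frac{1}{|M'|}\sum_{(i,\bar j) \in M'}(e_i - e_{\bar j})$ lies in $\Delta_F \cap \Delta_{F'}$. If $p \in \Delta_{F \cap F'}$ then the unique barycentric representation of $p$ in the simplex $\Delta_F$ must be supported on $F \cap F'$, forcing $M \subset F \cap F' \subset F'$; by the same uniqueness applied to $\Delta_{F'}$, $M$ and $M'$ are both the barycentric representation of $p$ in $\Delta_{F'}$, so $M = M'$, a contradiction. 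Hence $\Delta_F \cap \Delta_{F'} \neq \Delta_{F \cap F'}$.

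For $(3) \Rightarrow (1)$, take any $p \in \Delta_F \cap \Delta_{F'}$ and write $p = \sum_{(i,\bar j) \in F} \alpha_{ij}(e_i - e_{\bar j}) = \sum_{(i,\bar j) \in F'} \beta_{ij}(e_i - e_{\bar j})$ with $\alpha, \beta \geq 0$ of total mass $1$. Equating coordinates gives $\sum_{\bar j:(i,\bar j)\in F} \alpha_{ij} = \sum_{\bar j:(i,\bar j) \in F'} \beta_{ij}$ for each $i \in [m]$, and analogously for each $\bar j \in [\bar n]$. These are exactly the flow-conservation equations for the nonnegative flow on $U(F,F')$ that assigns mass $\alpha_{ij}$ to the edge $i \to \bar j$ (for $(i,\bar j) \in F$) and mass $\beta_{ij}$ to the edge $\bar j \to i$ (for $(i,\bar j) \in F'$). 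Standard flow decomposition writes this flow as a nonnegative combination of flows along directed cycles; assumption $(3)$ restricts these cycles to length $2$, and every $2$-cycle lies over an edge of $F \cap F'$. Hence both $\alpha$ and $\beta$ are supported on $F \cap F'$, so $p \in \Delta_{F \cap F'}$.

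The main technical point to handle carefully is the flow decomposition step in $(3) \Rightarrow (1)$: it must be invoked in the real-valued (not just integral) setting, which is standard via iteratively subtracting the minimum-weight edge along a directed cycle. The rest is bookkeeping: verifying that in each cycle argument the edges lie in the correct sides of $U(F,F')$, and that subgraphs of the forests $F, F'$ remain forests so that $\Delta_F$ and $\Delta_{F'}$ really are simplices with unique barycentric coordinates.
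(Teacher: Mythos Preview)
Your proof is correct. The paper takes a shorter route: it cites \cite[Lemma~12.6]{Postnikov} for the equivalence $(1)\Leftrightarrow(3)$ (remarking that the argument there, stated for spanning trees, carries over verbatim to forests), declares $(3)\Rightarrow(2)$ obvious, and proves $(2)\Rightarrow(3)$ by the same in-degree-equals-out-degree observation you use. Your argument is more self-contained: instead of appealing to the external reference, you give direct proofs of $(1)\Rightarrow(2)$ via uniqueness of barycentric coordinates in a simplex and of $(3)\Rightarrow(1)$ via decomposing a nonnegative circulation on $U(F,F')$ into cycle flows. What you gain is independence from the cited source and an explicit mechanism (the flow/circulation picture) linking the geometry of $\Delta_F\cap\Delta_{F'}$ to the combinatorics of $U(F,F')$; what the paper gains is brevity.
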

\begin{proof}
  The equivalence of~\eqref{compat:1} and~\eqref{compat:4} is proven in~\cite[Lemma~12.6]{Postnikov} for the case when $F$ and $F'$ are spanning trees of $G$, but the proof translates verbatim to the case of forests.  The fact that~\eqref{compat:4} implies~\eqref{compat:2} is obvious.
  Finally, note that if there exist $M\subset F$ and $M'\subset F'$ as in Definition~\ref{dfn:compatible:intro}, i.e., such that $\LeftSupport(M)=\LeftSupport(M')$ and $\RightSupport(M)=\RightSupport(M')$, then for every vertex of $G$, its indegree in $U(M,M')$ equals to its outdegree in $U(M,M')$, and thus $U(M,M')$ contains a directed cycle of length at least $4$ because we have assumed $M\neq M'$. This shows that~\eqref{compat:2} implies~\eqref{compat:4}, finishing the proof of the lemma.
\end{proof}

We need one more step before proving Lemma~\ref{lemma:LDp_bij}.

\begin{lemma}\label{lemma:LDp_different}
Suppose that $\triang$ is a triangulation of $Q_G$ and consider two forests $F,F'\in\RM(\triang)$. Then we have $\LDp(F)\neq \LDp(F')$ for $F\neq F'$.
\end{lemma}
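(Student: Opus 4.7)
The plan is to argue by contradiction using the compatibility criterion from Lemma~\ref{lemma:compatible}. Suppose that $F, F' \in \RM(\triang)$ satisfy $F \neq F'$ and $\LDp(F) = \LDp(F')$. Since $F$ and $F'$ both belong to $\Forests(\triang)$, the simplices $\Delta_F$ and $\Delta_{F'}$ intersect by their common face, so by Lemma~\ref{lemma:compatible} the directed graph $U(F,F')$ contains no directed cycle of length $\geq 3$. The goal is to exhibit such a cycle and obtain the contradiction.

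First I would examine the in- and out-degrees of $U(F,F')$. At each vertex $i \in [m]$ the out-degree equals $\deg_i(F)$ and the in-degree equals $\deg_i(F')$, which are equal by hypothesis. At each vertex $\bar j \in [\bar n]$ both in-degree and out-degree equal $1$ because $F$ and $F'$ are right semi-matchings. Thus every vertex of $U(F,F')$ is balanced (in-degree $=$ out-degree). Now remove from $U(F,F')$ all pairs of edges $\{i \to \bar j,\ \bar j \to i\}$ that come from common edges $(i,\bar j)\in F \cap F'$; call the remaining directed graph $U^\circ$. The edge set of $U^\circ$ corresponds precisely to the symmetric difference $F \triangle F'$, and balance at every vertex is preserved, since each removal subtracts one from in-degree and one from out-degree at both endpoints.

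Because $F \neq F'$ and both have the same number of edges (equal to $n$), the symmetric difference is nonempty, so $U^\circ$ has at least one edge. Any nonempty balanced directed graph contains a directed cycle, so $U^\circ$ contains such a cycle $C$. This cycle is contained in the bipartite graph $U(F,F')$, so its length is even. The key point is that $C$ cannot have length $2$: a length-$2$ directed cycle $i \to \bar j \to i$ would require both $(i,\bar j)\in F$ and $(i,\bar j)\in F'$, putting this edge in $F \cap F'$, contradicting its presence in $U^\circ$. Therefore $C$ has length $\geq 4$, and in particular length $\geq 3$, contradicting the compatibility of $F$ and $F'$ via Lemma~\ref{lemma:compatible}.

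I expect no serious obstacle here; the only point requiring care is the length-$2$ exclusion, which is precisely what rules out the trivial "double edges" arising from $F \cap F'$ and forces the cycle witnessing incompatibility to be genuinely long.
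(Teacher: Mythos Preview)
Your proof is correct and follows essentially the same approach as the paper: assume $\LDp(F)=\LDp(F')$, note that all degrees match, and use the balanced in/out-degree structure of $U(F,F')$ to produce a directed cycle contradicting compatibility via Lemma~\ref{lemma:compatible}. Your version is in fact more careful than the paper's, which simply asserts ``there is a directed cycle in $U(F,F')$'' without explicitly excluding the length-$2$ cycles coming from $F\cap F'$; your removal of these to form $U^\circ$ makes that step precise.
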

\begin{proof}
Since $F$ and $F'$ both belong to $\triang$, they must be compatible by Lemma~\ref{lemma:compatible}, but on the other hand, the assumptions of Lemma~\ref{lemma:LDp_different} force $F$ and $F'$ to satisfy $\deg_i(F)=\deg_i(F')$ and $\deg_{\bar j}(F)=\deg_{\bar j}(F')$ for all $i\in [m]$ and $\bar j\in[\bar n]$. Thus there is a directed cycle in $U(F,F')$, hence they are not compatible, a contradiction.
\end{proof}

\begin{proof}[Proof of Lemma~\ref{lemma:LDp_bij}]
Parts~\eqref{item:bij:LD} and~\eqref{item:bij:RD} follow from Lemma~\ref{lemma:LD_bij}.

We now prove part~\eqref{item:bij:LDp}. Let $\triang$ be a triangulation of $Q_G$. By Lemma~\ref{lemma:LDp_different}, we only need to show that for every lattice point $a\in P_G\cap \Z^m$, there exists a forest $F\in\RM(\triang)$ such that $\LDp(F)=a$. It is shown in~\cite[Section~14]{Postnikov} that $\triang$ corresponds to a \emph{fine mixed subdivision} of $P_G$ and it follows from the proof of~\cite[Proposition~14.12]{Postnikov} that $a$ is a vertex of that mixed subdivision. It remains to note that such a vertex corresponds precisely to a simplex $\Delta_F$ for some $F\in\RM(\triang)$. We are done with the proof of part~\eqref{item:bij:LDp}. Part~\eqref{item:bij:RDp} is completely analogous.

Finally, we show part~\eqref{item:bij:LDpRDp}. Let $(I,J)\in\LRS_G$. By Lemma~\ref{lemma:compatible}, there is at most one matching $F\in\Matchings(\triang)$ such that $\LeftSupport(F)=I$ and $\RightSupport(F)=J$, and it remains to show that such a matching exists. Indeed, denote $k:=|I|=|J|$ and consider the point
\[p_{I,J}:=\frac1k \left(e_I+e_J\right).\]
Since $(I,J)\in\LRS_G$, we get that $p_{I,J}\in Q_G$, and therefore it must belong to $\Delta_T$ for some $T\in \Trees(\triang)$, in other words, there is a way to represent $p_{I,J}$ as a convex combination of vectors $e_i+e_{\bar j}$ for $(i,\bar j)\in T$. Let $F\subset T$ be the set of edges whose coefficients in this convex combination are nonzero. We claim that $F$ is a partial matching with $\LeftSupport(F)=I$ and $\RightSupport(F)=J$. Indeed, let $i\in [m]$ be a leaf of $F$ adjacent to a single edge $(i,\bar j)\in F$. It follows that $i\in I$, $\bar j\in J$, and the coefficient of $(i,\bar j)$ in the convex combination must be equal to $\frac1k$. Therefore $\bar j$ is not adjacent to any other edges of $F$. Since this holds for every leaf $i$ of $F$ (and similarly for every leaf $\bar j$ of $F$), we have shown that $F$ is a partial matching, thus finishing the proof of part~\eqref{item:bij:LDpRDp}.
\end{proof}

Lemma~\ref{lemma:two_definitions_agree} follows from our next observation.

\begin{lemma}\label{lemma:tree_forest}
Let $\triang$ be a triangulation of $Q_G$ and $i\in [m]$. Consider a tree $T\in\Trees(\triang)$ and a forest $F\in\RM(\triang)$ satisfying $\LDm(T)+e_i=\LDp(F)$. Then $N_i(F)=N_i(T)$.
\end{lemma}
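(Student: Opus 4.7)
The plan is to pin down $F$ explicitly as a subforest of $T$ using the rooted-tree construction, and then appeal to the uniqueness in Lemma~\ref{lemma:LDp_bij}\eqref{item:bij:LDp}. First I would root the spanning tree $T$ at the vertex $i \in [m]$. Since $T$ is bipartite and spans $[m] \cup [\bar n]$, every non-root vertex has a unique parent edge, and the parent of any $\bar j \in [\bar n]$ is some $p(\bar j) \in [m]$. Define
\[
F' := \{(p(\bar j), \bar j) \mid \bar j \in [\bar n]\} \subset T.
\]
By construction $\deg_{\bar j}(F') = 1$ for all $\bar j$, so $F'$ is a right semi-matching.

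Next I would compute $\LDp(F')$ and show it equals $\LDp(F)$. For $k \in [m]$, $\deg_k(F')$ is the number of children of $k$ lying in $[\bar n]$ in the rooted tree. If $k \neq i$, then $k$ has exactly one parent edge (going to some vertex in $[\bar n]$), so $\deg_k(F') = \deg_k(T) - 1 = \LDm(T)_k$; if $k = i$, then $i$ is the root and has no parent edge, so $\deg_i(F') = \deg_i(T) = \LDm(T)_i + 1$. Combining the two cases gives $\LDp(F') = \LDm(T) + e_i = \LDp(F)$.

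Now observe that since $F' \subset T$ as edge sets, the simplex $\Delta_{F'}$ is a face of $\Delta_T$. Because $\Delta_T \in \triang$ and $\triang$ is a simplicial complex, $\Delta_{F'} \in \triang$ as well, so $F' \in \RM(\triang)$. Applying Lemma~\ref{lemma:LDp_bij}\eqref{item:bij:LDp}, the map $\LDp : \RM(\triang) \to P_G \cap \Z^m$ is a bijection. Since both $F$ and $F'$ lie in $\RM(\triang)$ and have the same image $\LDp(F) = \LDp(F')$, we conclude $F = F'$. In particular $F \subset T$, so $N_i(F) \subset N_i(T)$, and the equality $|N_i(F)| = \deg_i(F) = \deg_i(T) = |N_i(T)|$ (which we just verified) forces $N_i(F) = N_i(T)$.

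There is no real obstacle here: once one sees that rooting $T$ at $i$ produces a right semi-matching with exactly the required left-degree vector, the rest is automatic from the uniqueness half of Lemma~\ref{lemma:LDp_bij}\eqref{item:bij:LDp}. The only thing to be mildly careful about is the bipartite bookkeeping when counting children versus parents at each vertex.
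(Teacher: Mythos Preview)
Your argument is correct, and it takes a genuinely different route from the paper. The paper argues by contradiction: assuming $N_i(F)\neq N_i(T)$, it builds a map $\bar q:[m]\to[\bar n]$ with $(i',\bar q(i'))\in T\setminus F$ for every $i'$, and then exhibits a directed cycle in $U(T,F)$ via the subgraph~\eqref{eq:q_bar}, contradicting compatibility (Lemma~\ref{lemma:compatible}). You instead construct $F$ explicitly as the parent-edge forest of $T$ rooted at $i$, and invoke the injectivity of $\LDp$ on $\RM(\triang)$ from Lemma~\ref{lemma:LDp_bij}\eqref{item:bij:LDp} to force $F=F'$. Your approach is more structural and yields the stronger conclusion $F\subset T$ essentially for free; the paper's approach is more self-contained in that it works directly with the cycle criterion rather than citing the bijection (though of course the injectivity in Lemma~\ref{lemma:LDp_bij}\eqref{item:bij:LDp} is itself proved via the same cycle criterion in Lemma~\ref{lemma:LDp_different}, so the dependency graphs ultimately coincide).
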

\begin{proof}
  For each $i'\in [m]$ that is not equal to $i$, we have $\deg_{i'}(T)=\deg_{i'}(F)+1$, so there exists a map $\bar q:[m]\setminus \{i\}\to [\bar n]$ satisfying $(i',\bar q(i'))\in T\setminus F$ for all $i'\in [m]\setminus \{i\}$. Suppose that $N_i(F)\neq N_i(T)$. Then the map $\bar q$ can be extended to $[m]$ by setting $\bar q(i)$ to be any element of $N_i(T)\setminus N_i(F)$ (these two sets are of the same cardinality). After that, we have $(i',\bar q(i'))\in T\setminus F$ for all $i'\in [m]$. Thus the directed graph $U(T,F)$ contains a directed subgraph $U'$ with edge set
  \begin{equation}\label{eq:q_bar}
  \{i'\to \bar q(i')\mid i'\in[m]\}\cup \{\bar j\to i'\mid (i',\bar j)\in F\}.
  \end{equation}
  By construction, $U'$ has no directed cycles of length $2$, and each vertex of this directed graph has outdegree $1$. Thus $U'$ contains a directed cycle, a contradiction.
\end{proof}

We now fix a triangulation $\triang$ of $Q_G$ and proceed to showing that the map $\svt_\triang$ satisfies the axioms of a trianguloid.

\begin{lemma}\label{lemma:axCardinality}
  The map $\svt_\triang$ satisfies Axioms~\axCardinality and~\axExistence.
\end{lemma}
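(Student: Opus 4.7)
The proof plan is very direct: both axioms reduce to unpacking the definition $\svt_\triang(\dottoi a) = N_i(F_\triang(a))$ together with the defining properties of the right semi-matching $F_\triang(a)$.

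For Axiom \axCardinality, I would observe that
\[
|\svt_\triang(\dottoi a)| = |N_i(F_\triang(a))| = \deg_i(F_\triang(a)),
\]
and since $F_\triang(a)\in\RM(\triang)$ satisfies $\LDp(F_\triang(a))=a$ by the definition of $F_\triang$ (which is the inverse of the bijection $\LDp\colon\RM(\triang)\to P_G\cap\Z^m$ from Lemma~\ref{lemma:LDp_bij}\eqref{item:bij:LDp}), the $i$-th coordinate $a_i$ of $\LDp(F_\triang(a))$ equals $\deg_i(F_\triang(a))$. This gives $|\svt_\triang(\dottoi a)|=a_i$ as required. Note the edge $\dottoi a$ being in $\SVTGE$ forces $a_i>0$, which is consistent.

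For Axiom \axExistence, fix $a\in P_G\cap\Z^m$ and $\bar j\in[\bar n]$. The semi-matching $F:=F_\triang(a)\in\RM(\triang)$ satisfies $\deg_{\bar j}(F)=1$ by Definition~\ref{dfn:RSM}, so there is a unique edge $(i,\bar j)\in F$ for some $i\in[m]$. Since $F\subset G$, we have $i\in N_{\bar j}(G)$, and since $(i,\bar j)\in F$, we also have $\bar j\in N_i(F)=\svt_\triang(\dottoi a)$. This establishes the axiom.

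There is no substantive obstacle here, because the hard work was already done in proving Lemma~\ref{lemma:LDp_bij}\eqref{item:bij:LDp}, which guarantees that $F_\triang(a)$ is well defined for every $a\in P_G\cap\Z^m$; both axioms then follow immediately from the combinatorial identities $\LDp(F)_i=\deg_i(F)$ and $\deg_{\bar j}(F)=1$ for $F\in\RM(\triang)$. The only mild subtlety worth flagging in the write-up is the remark, made just after Definition~\ref{dfn:pre_trianguloid}, that the case $a_i=0$ corresponds to no edge $\dottoi a$ in $\SVTGE$, so Axiom \axCardinality is vacuously consistent there and plays no role in the argument.
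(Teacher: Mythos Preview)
Your proof is correct and follows essentially the same approach as the paper's: both axioms are read off directly from the definition $\svt_\triang(\dottoi a)=N_i(F_\triang(a))$ together with $\LDp(F_\triang(a))=a$ and the fact that $F_\triang(a)$ is a right semi-matching. If anything, your write-up is slightly more explicit in noting that $i\in N_{\bar j}(G)$ follows from $F\subset G$, which the paper leaves implicit.
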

\begin{proof}
  This is obvious from~\eqref{eq:svt_triang}: $\svt_\triang(\dottoi a)$ is equal to $N_i(F_\triang(a))$, so its cardinality is equal to the degree of $i$ in $F_\triang(a)$, i.e., to $a_i$ (by the definition of $F_\triang$), which proves~\axCardinality. Since $F_\triang(a)$ is a right semi-matching, for each $\bar j\in [\bar n]$ there exists a (unique) $i\in [m]$ such that $(i,\bar j)\in F_\triang(a)$, which proves~\axExistence.
\end{proof}

\begin{lemma}\label{lemma:axContainment}
  The map $\svt_\triang$ satisfies Axiom~\axContainment.
\end{lemma}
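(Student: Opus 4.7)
Set $F:=F_\triang(a)$ and $F':=F_\triang(a')$; these are well-defined right semi-matchings in $\RM(\triang)$ by Lemma~\ref{lemma:LDp_bij}~\eqref{item:bij:LDp}. The plan is to translate $\svt_\triang(\dottoi a)\subset \svt_\triang(\dottoi a')$ via~\eqref{eq:svt_triang} into the inclusion $N_i(F)\subset N_i(F')$, and then to deduce this inclusion by analyzing the symmetric difference of $F$ and $F'$ using compatibility.

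Because $F$ and $F'$ are right semi-matchings, they are encoded by functions $f,f'\colon [\bar n]\to [m]$ with $(f(\bar j''),\bar j'')\in F$ and $(f'(\bar j''),\bar j'')\in F'$. Let $D:=\{\bar j''\in[\bar n]\mid f(\bar j'')\neq f'(\bar j'')\}$ and form the bipartite directed graph $H$ on vertex set $[m]\cup D$ whose edges are $f(\bar j'')\to \bar j''$ and $\bar j''\to f'(\bar j'')$ for each $\bar j''\in D$. By construction each $\bar j''\in D$ has both indegree and outdegree $1$ in $H$, while for $i''\in[m]$ the excess $(\text{outdeg}-\text{indeg})$ equals $\deg_{i''}(F)-\deg_{i''}(F')=a_{i''}-a'_{i''}$; since $a'-a=e_i-e_j$, this excess is $+1$ at $j$, $-1$ at $i$, and $0$ elsewhere.

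The key step is to show that $H$ is a simple directed path from $j$ to $i$. Since $F,F'\in\Forests(\triang)$ are compatible, Lemma~\ref{lemma:compatible} implies that $U(F,F')$ has no directed cycle of length $\geq 3$. Any directed cycle in $H$ alternates between $[m]$ and $D$; a length-$2$ cycle at $\bar j''\in D$ would force $f(\bar j'')=f'(\bar j'')$, contradicting $\bar j''\in D$. Hence every cycle in $H$ has length $\geq 4$, and compatibility then rules all of them out, so $H$ is a DAG. A standard flow-decomposition argument then says $H$ splits into an edge-disjoint union of cycles and a single $j$-to-$i$ walk; the absence of cycles leaves $H$ as a single trail from $j$ to $i$, and in a DAG every trail is vertex-simple. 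Writing this trail as $j=i_0\to\bar k_1\to i_1\to\cdots\to\bar k_s\to i_s=i$, all $i_t$'s are distinct and $f(\bar k_t)=i_{t-1}$, $f'(\bar k_t)=i_t$.

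It then remains to read off $N_i(F)$ and $N_i(F')$. Outside $D$, $f$ and $f'$ agree, so $N_i(F)\cap([\bar n]\setminus D)=N_i(F')\cap([\bar n]\setminus D)$. Inside $D$, $N_i(F)\cap D=\{\bar k_t\mid i_{t-1}=i\}$ is empty because $i=i_s$ is distinct from $i_0,\dots,i_{s-1}$ by simplicity of the path, whereas $N_i(F')\cap D=\{\bar k_s\}$. Combining these gives $N_i(F)\subset N_i(F')$, establishing~\axContainment. The main obstacle is Step~3, namely justifying rigorously that a DAG with the stated degree profile is a \emph{single} simple $j$-$i$ path rather than a more complicated Eulerian trail; the flow-decomposition argument handles this cleanly because any two edge-disjoint $j$-$i$ walks in a balanced DAG would leave behind a balanced subgraph containing a cycle.
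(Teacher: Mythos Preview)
Your proof is correct and follows essentially the same approach as the paper: both set up the directed graph on the symmetric difference of $F$ and $F'$ and use compatibility (Lemma~\ref{lemma:compatible}) to rule out directed cycles. The paper's version is more concise---it argues by contradiction that if $N_i(F)\not\subset N_i(F')$ then every vertex of $U'$ with an incoming edge also has an outgoing edge, forcing a cycle---whereas you go further and pin down the full structure of $H$ as a single simple $j$-to-$i$ path, which is more than is needed for the inclusion but perfectly valid.
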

\begin{proof}
Let $a$ and $a':=a+e_i-e_j$ be two points of $P_G\cap \Z^m$, and let $F:=F_\triang(a)$, $F':=F_\triang(a')$ be the corresponding elements of $\RM(\triang)$. Consider the directed subgraph $U'$ of $U(F,F')$ with all edges of $F\cap F'$ removed. We would like to show that $N_i(F)\subset N_i(F')$. Suppose that this is not the case, then clearly whenever a vertex of $U'$ has an incoming edge, it also must have an outgoing edge. (This was already true for each vertex of $U'$ except for possibly $i$.) Thus $U'$ contains a directed cycle and we get a contradiction.
\end{proof}

We have thus shown that $\svt_\triang$ is a pre-trianguloid, completing the proof of Proposition~\ref{prop:triang_implies_svt} as well as of its special case, Proposition~\ref{prop:triang_implies_svt_K}. We finish by showing that $\svt_\triang$ is in fact a trianguloid.

\begin{lemma}\label{lemma:axHexagon}
  The map $\svt:=\svt_\triang$ satisfies Axiom~\axHexagon.
\end{lemma}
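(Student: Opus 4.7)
The plan is to produce the tree $T_j \in \Trees(\triang)$ with $\LDm(T_j) = c+e_j$; once that is done, $c+e_j \in P_G^-$ follows from Lemma~\ref{lemma:LDp_bij}, and the two equalities drop out of Lemma~\ref{lemma:tree_forest}. Concretely, let $T_i, T_k \in \Trees(\triang)$ be the unique trees with $\LDm(T_i) = c+e_i$ and $\LDm(T_k) = c+e_k$, and let $F_1, F_2, F_3 \in \RM(\triang)$ denote the right semi-matchings with $\LDp$-vectors $c+e_i+e_j$, $c+e_j+e_k$, and $c+e_i+e_k$, respectively. Applying Lemma~\ref{lemma:tree_forest} to the pairs $(T_i, F_1)$ and $(T_k, F_2)$ (each with $\ell = j$) rewrites the hypothesis as $N_j(F_1) \neq N_j(F_2)$.

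To extract $T_j$, I would first analyze $U(F_1, F_2)$. Since both forests are right semi-matchings, every right vertex is balanced in this graph; on the left, the net outflow is $+1$ at $i$, $-1$ at $k$, and $0$ elsewhere. Compatibility (Lemma~\ref{lemma:compatible}) rules out any directed cycle of length $\geq 3$, so after removing the $2$-cycles generated by $F_1 \cap F_2$ we are left with a DAG whose unique source is $i$ and unique sink is $k$. This DAG is forced to be a single directed path $P : i = u_0 \to \bar y_1 \to u_1 \to \cdots \to \bar y_r \to u_r = k$, alternating forward $F_1$-edges and backward $F_2$-edges. The assumption $N_j(F_1) \neq N_j(F_2)$ guarantees at least one edge of $P$ is incident to $j$, and since $j$ is balanced it must be an interior vertex, say $j = u_{r'}$ with $0 < r' < r$. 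Factor $P = P_1 \cdot P_2$ at $j$.

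The candidate tree $T_j$ is then produced from $T_i$ by a symmetric-difference operation along a lift of $P_1$ into $T_i \cup T_k$ (and equivalently from $T_k$ by the analogous operation along $P_2$). A direct degree count gives $\LDm(T_j) = c+e_j$. Crucially, since $P_2$ does not touch the vertex $i$ (its endpoints are $j$ and $k$, and $i \neq j, k$), the modification of $T_k$ along $P_2$ leaves its $i$-edges untouched, yielding $N_i(T_j) = N_i(T_k)$; symmetrically, $N_k(T_j) = N_k(T_i)$. Combined with Lemma~\ref{lemma:tree_forest} applied to $(T_k, F_3)$ and $(T_i, F_3)$, these give $N_i(T_j) = N_i(F_3)$ and $N_k(T_j) = N_k(F_3)$, which is precisely one formulation of the desired hexagon equalities after a further application of Lemma~\ref{lemma:tree_forest} to $(T_j, F_1)$ and $(T_j, F_2)$.

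The main obstacle is to verify that the candidate $T_j$ is a spanning tree of $G$ and is compatible (Definition~\ref{dfn:compatible:intro}) with every $T' \in \Trees(\triang)$; then Lemma~\ref{lemma:LDp_bij} places $T_j$ into $\Trees(\triang)$ as the unique tree with its $\LDm$-vector. The tree property should follow from a matroid-style sequence of exchanges along $P_1$, with the $P_1$-edges first lifted from $F_1 \cup F_2$ to $T_i \cup T_k$ using Lemma~\ref{lemma:tree_forest} together with the alternating structure of $P$. For compatibility with an arbitrary $T' \in \Trees(\triang)$, I would argue by contradiction: any directed cycle of length $\geq 3$ in $U(T_j, T')$ can be rerouted, via $P_1$ (or $P_2$), into a directed cycle of length $\geq 3$ in $U(T_i, T')$ or $U(T_k, T')$, contradicting the compatibility that $T_i, T_k$ already enjoy with $T'$ inside $\triang$. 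The rigorous bookkeeping of this lift-and-reroute argument is the most delicate step.
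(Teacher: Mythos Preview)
Your strategy is to manufacture the tree $T_j$ with $\LDm(T_j)=c+e_j$ and then certify that it lies in $\Trees(\triang)$ by checking compatibility with every $T'\in\Trees(\triang)$. That last step is where the proposal breaks down. The ``reroute a cycle of $U(T_j,T')$ through $P_1$ or $P_2$ into a cycle of $U(T_i,T')$ or $U(T_k,T')$'' sketch is not a proof: the edges you swapped to pass from $T_i$ to $T_j$ live in $F_1\cup F_2$, not in $T_i\cup T_k$, and the ``lift'' via Lemma~\ref{lemma:tree_forest} only controls the neighbourhoods at the single vertex $j$ (that lemma says $N_\ell(T)=N_\ell(F)$ for the one index $\ell$ with $\LDm(T)+e_\ell=\LDp(F)$). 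So you do not know that the alternating edges of $P_1$ at vertices other than $j$ actually sit inside $T_i$ or $T_k$, and without that the rerouting argument has nothing to grab onto. Even granting the lift, a bad cycle in $U(T_j,T')$ could weave in and out of the modified region several times, and it is not clear why each excursion can be patched without creating $2$-cycles or losing simplicity.

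The paper sidesteps all of this: it never constructs $T_j$. Instead it compares $T:=T_\triang(c+e_i)$ directly with the single right semi-matching $F:=F_\triang(c+e_j+e_k)$. One has $\deg_{i'}(T)>\deg_{i'}(F)$ for every $i'\neq j,k$, so there is a choice $\bar q(i')$ with $(i',\bar q(i'))\in T\setminus F$; the hypothesis $N_j(T)\neq N_j(F)$ supplies $\bar q(j)$ as well. If one could also choose $\bar q(k)$, the directed subgraph $\{i'\to\bar q(i')\}\cup\{\bar j\to i':(i',\bar j)\in F\}$ of $U(T,F)$ would have outdegree~$1$ at every vertex and no $2$-cycles, forcing a long cycle and contradicting compatibility of $T$ and $F$. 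Hence $N_k(T)=N_k(F)$, which is exactly $\svt(c+e_i\tokdot)=\svt(c+e_j\tokdot)$; the other equality is symmetric. The membership $c+e_j\in P_G^-$ is then obtained by chasing the same $\bar q$ along $U(T,F)$ to build, for each $t$, a right semi-matching with $\LDp$-vector $c+e_j+e_t$. This is a two–paragraph argument, whereas your route asks you to prove compatibility of a hand-built tree against the entire triangulation.
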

\begin{proof}
  Let $c$, $i$, $j$, $k$ be as in Axiom~\axHexagon, and let $T:=T_\triang(c+e_i)\in\Trees(\triang)$ be the tree with $\LDm(T)=c+e_i$. Let $F:=F_\triang(c+e_j+e_k)\in\RM(\triang)$ be the forest with $\LDp(F)=c+e_j+e_k$. Assume that $\svt(c+e_i\tojdot)\neq\svt(c+e_k\tojdot)$, which by Lemma~\ref{lemma:tree_forest} is equivalent to $N_j(T)\neq N_j(F)$, since $N_j(F)=\svt(c+e_k\tojdot)$ and $N_j(T)=\svt(c+e_i\tojdot)$. These two sets have the same cardinality $c_j+1$, so there exists $\bar q(j)\in [\bar n]$ such that $(j,\bar q(j))\in T\setminus F$. For each $i'\in [m]\setminus\{j,k\}$, we have $\deg_{i'}(T)>\deg_{i'}(F)$ and thus there exists $\bar q(i')\in [\bar n]$ such that $(i',\bar q(i'))\in T\setminus F$. Hence if $N_k(T)\neq N_k(F)$ then we can extend $\bar q$ to the whole $[m]$ and get a contradiction because $U(T,F)$ will contain a directed subgraph $U'$ with edge set given by~\eqref{eq:q_bar} that must have a cycle of length more than $2$. We have shown that $N_k(T)=N_k(F)$, equivalently, $\svt(c+e_i\tokdot)=\svt(c+e_j\tokdot)$. The proof that $\svt(c+e_j\toidot)=\svt(c+e_k\toidot)$ is completely similar.

  The only thing left to show is that $c+e_j$ belongs to $P_G^-$. It suffices to show that for any $t\in [m]$ we have $c+e_j+e_t\in P_G$. This is clear for $t=k$ so assume that $t\neq k$. We claim that there exists a sequence $(t_1,t_2,\dots,t_r)$ of distinct elements of $[m]$ such that $t_1=t$, $t_r=k$, and for each $1\leq s\leq r-1$, $t_{s+1}$ is the unique vertex that is connected to $\bar q(t_s)$ in $F$. Indeed, we build such a sequence by induction: if $t_s\neq k$ then there exists a unique $t_{s+1}$ connected to $\bar q(t_s)$ in $F$, and it must be different from $t_1,\dots,t_s$ since otherwise we would have found a directed cycle in $U(T,F)$. This process has to terminate, and since $k$ is the only vertex for which $\bar q$ is undefined, we must have $t_r=k$ for some $r\geq 2$. Consider now the forest
  \[F'=F\cup\{(t_s,\bar q(t_s))\mid 1\leq s\leq r-1\}\setminus \{(t_{s+1},\bar q(t_s))\mid 1\leq s\leq r-1\}.\]
  Clearly $F'$ is a right semi-matching with $\LDp(F')=c+e_j+e_t$ which implies that $c+e_j+e_t\in P_G$. We are done with the proof.
\end{proof}

\newcommand\todot[1]{\too{#1}\dotDown}
\newcommand\dotto[1]{\dotUp\too{#1}}

\section{From trianguloids to triangulations}\label{sec:svt_implies_triang}

We start by showing Propositions~\ref{prop:svt_implies_trees} and~\ref{prop:svt_implies_trees_K}. The following lemma will be used many times throughout our proofs.

\begin{lemma}\label{lemma:path}
  Let $\svt$ be a pre-trianguloid, $b\in P_G^-$, and let $T:=T_\svt(b)\subset G$ be the subgraph given by~\eqref{eq:svt_triang_K}. Suppose that for some $r\geq 1$ there exists a \emph{simple path} $\bar j_1,i_1,\bar j_2,\dots,i_{r-1}, \bar j_r,i_r$ in $T$, i.e., there exist distinct indices $i_1,\dots,i_r\in [m]$ and $\bar j_1,\dots,\bar j_r\in [\bar n]$ such that for each $1\leq s\leq r$ we have $(i_{s},\bar j_{s})\in T$ and for each $1\leq s\leq r-1$ we have $(i_s,\bar j_{s+1})\in T$. Then for all $1\leq s\leq t\leq r$, we have
  \begin{equation}\label{eq:path}
\bar j_s\in \svt(\dotto{i_s} b+e_{i_t}).
  \end{equation}
\end{lemma}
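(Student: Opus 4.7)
The plan is to prove the statement by backwards induction on $s$, establishing for each $s=r,r-1,\dots,1$ that $\bar j_s\in \svt(\dotto{i_s} b+e_{i_t})$ holds for every $t$ with $s\leq t\leq r$. The base case $s=r$ is trivial since only $t=r$ is possible and the claim reduces to $\bar j_r\in \svt(b\to b+e_{i_r})$, which is immediate from $(i_r,\bar j_r)\in T$ together with~\eqref{eq:T_svt}. Similarly, for any $s$ the diagonal case $t=s$ follows from $(i_s,\bar j_s)\in T$, so it remains only to handle the inductive step in the range $1\leq s<t\leq r$.

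In that case, I will apply Axiom~\axContainment with $a:=b+e_{i_t}$ and $a':=a+e_{i_s}-e_{i_t}=b+e_{i_s}$; both lie in $P_G\cap\Z^m$ because $b\in P_G^-$. Its hypothesis $a_{i_s}=b_{i_s}>0$ is satisfied since the given path exhibits the two distinct neighbors $\bar j_s,\bar j_{s+1}$ of $i_s$ in $T$, so $|\svt(b\to b+e_{i_s})|=b_{i_s}+1\geq 2$ by Axiom~\axCardinality. The axiom thus yields the inclusion
\[\svt(\dotto{i_s} b+e_{i_t})\;\subset\;\svt(\dotto{i_s} b+e_{i_s})\;=\;\svt(b\to b+e_{i_s})\]
of a $b_{i_s}$-element set inside a $(b_{i_s}+1)$-element set, so exactly one element of the larger set is missing from the smaller one.

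To pin down that missing element I will invoke the inductive hypothesis at $s+1$: since $t\geq s+1$, it produces $\bar j_{s+1}\in \svt(\dotto{i_{s+1}} b+e_{i_t})$. Axioms~\axCardinality and~\axExistence, together with $\sum_i a_i=n$ for $a\in P_G$, show (as in the straightforward extension of Remark~\ref{rmk:set_partitionK} to arbitrary $G$) that the family $\{\svt(\dotto{i} b+e_{i_t})\}_{i\in[m]}$ partitions $[\bar n]$. Hence $\bar j_{s+1}\notin \svt(\dotto{i_s} b+e_{i_t})$, identifying $\bar j_{s+1}$ as the unique excluded element of the inclusion above. Since both $\bar j_s$ and $\bar j_{s+1}$ lie in $\svt(b\to b+e_{i_s})$ by $(i_s,\bar j_s),(i_s,\bar j_{s+1})\in T$, and they are distinct by simplicity of the path, $\bar j_s$ must survive in $\svt(\dotto{i_s} b+e_{i_t})$, closing the induction. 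The only delicate point is this combinatorial bookkeeping: carrying the ``next'' edge $\bar j_{s+1}$ through the induction so that the partition property forces it, rather than $\bar j_s$, to be the single element lost under the containment axiom.
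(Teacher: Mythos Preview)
Your proof is correct and follows essentially the same approach as the paper: both fix the containment $\svt(\dotto{i_s} b+e_{i_t})\subset\svt(\dotto{i_s} b+e_{i_s})$ from Axiom~\axContainment, use the cardinality axiom to see exactly one element is dropped, and apply the inductive hypothesis at $s+1$ together with the partition property (Remark~\ref{rmk:set_partitionK}) to identify the dropped element as $\bar j_{s+1}$, whence $\bar j_s$ survives. The only cosmetic difference is that the paper fixes $t$ and inducts on $s$ downward from $s=t$, whereas you induct on $s$ handling all admissible $t$ simultaneously; the underlying argument is the same.
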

\begin{proof}
  
We note that for each $1\leq t\leq r$, we have $b+e_{i_t}\in P_G$, so the edge $\dotto{i_s} b+e_{i_t}$ belongs to $\SVTGE$ for all $1\leq s\leq t\leq r$. This includes the statement that the vector $b+e_{i_t}-e_{i_s}$ has nonnegative coordinates for all $1\leq s\leq t\leq r$.

  Fix $t\geq1$. For $s=t$, the statement $\bar j_t\in\svt(\dotto{i_t}b+e_{i_t})$ is by definition equivalent to $ (i_t,\bar j_t)\in T$, which is the case for all $1\leq t\leq r$. Let now $1\leq s<t$ and suppose that~\eqref{eq:path} is proven for the pair $(s+1,t)$. By Axiom~\axContainment, $\svt(\dotto{i_s} b+e_{i_t})\subset \svt(\dotto{i_s} b+e_{i_s})$, and by Axiom~\axCardinality, their cardinalities satisfy
  \[|\svt(\dotto{i_s} b+e_{i_t})|+1=|\svt(\dotto{i_s} b+e_{i_s})|.\]
  Therefore there exists a unique index $\bar j\in \svt(\dotto{i_s} b+e_{i_s})\setminus \svt(\dotto{i_s} b+e_{i_t})$. We claim that $\bar j=\bar j_{s+1}$. Indeed, we know that $\bar j_{s+1}\in \svt(\dotto{i_s} b+e_{i_s})$ because $(i_s,\bar j_{s+1})\in T$. On the other hand, by the induction hypothesis for $(s+1,t)$, we get that $\bar j_{s+1}\in\svt(\dotto{i_{s+1}} b+e_{i_t})$. Therefore by Remark~\ref{rmk:set_partitionK}, $\bar j_{s+1}\notin\svt(\dotto{i_{s}} b+e_{i_t})$ and we have shown that $\bar j=\bar j_{s+1}$. Since $\bar j_s\neq \bar j_{s+1}$ and $\bar j_s\in \svt(\dotto{i_s} b+e_{i_s})$, it follows that $\bar j_s\in\svt(\dotto{i_s} b+e_{i_t})$, which completes the proof. 
\end{proof}

\begin{remark}
We will sometimes use a variant of Lemma~\ref{lemma:path} where the path starts with $i_1$ instead of $\bar j_1$ (but still ends with $i_r$). In this case, \eqref{eq:path} applies to all $2\leq s\leq t\leq r$, with the same proof.
\end{remark}

\begin{proof}[Proof of Propositions~\ref{prop:svt_implies_trees} and~\ref{prop:svt_implies_trees_K}.]
Consider a vertex $b\in P_G^-$. We need to show that the collection $T:=T_\svt(b)$ of edges gives a spanning tree of $G$. By Axiom~\axCardinality, $T$ contains exactly $\sum_{i\in[m]} (b_i+1)=n+m-1$ edges. Thus we only need to show that it contains no cycles.

Suppose that $T$ contains a cycle that consists of vertices $\bar j_1,i_1,\bar j_2, i_2 \dots,\bar j_r,i_r,\bar j_1$ in this order. By Lemma~\ref{lemma:path} applied to $s=1$ and $t=r$, we get that $\bar j_1\in \svt(\dotto{i_1} b+e_{i_r})$. On the other hand, $(i_r,\bar j_1)\in T$ so $\bar j_1\in\svt(\dotto{i_r} b+e_{i_r})$. This contradicts Remark~\ref{rmk:set_partitionK}.
\end{proof}

Let us also use Lemma~\ref{lemma:path} to prove the following result which will be used in Section~\ref{sec:bijections_different}.

\def\Sources{S}
\begin{lemma}\label{lemma:N_bar_j}
  For a pre-trianguloid $\svt$ and an index $\bar j\in[\bar n]$, consider the set 
  \[P_G(\bar j;\svt):=\{a\in P_G\cap\Z^m\mid \exists\, b\in P_G^-:\ (b\to a)\in\SVTGE\text{ and }\bar j\in \svt(b\to a)\}.\]
  Then we have
  \begin{equation}\label{eq:N_bar_j}
P_G(\bar j;\svt)=\{b+e_i\mid b\in P_G^-,\ i\in N_{\bar j}(G)\}.
  \end{equation}
\end{lemma}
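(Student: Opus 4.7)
The plan is to prove the two inclusions of~\eqref{eq:N_bar_j} separately. For the containment $\subseteq$, suppose $a\in P_G(\bar j;\svt)$, so $a=b+e_{i'}$ for some $b\in P_G^-$ and $i'\in[m]$ with $\bar j\in\svt(\dotto{i'} a)$. Then $(i',\bar j)\in T_\svt(b)$ by~\eqref{eq:T_svt}, and Proposition~\ref{prop:svt_implies_trees} guarantees $T_\svt(b)$ is a spanning tree of $G$, forcing $i'\in N_{\bar j}(G)$.

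For the reverse inclusion $\supseteq$, fix $b\in P_G^-$ and $i\in N_{\bar j}(G)$, and set $a:=b+e_i$. The natural candidate emerges from the spanning tree $T:=T_\svt(b)$: since $T$ contains both $\bar j$ and $i$, there is a unique simple path $\bar j=\bar j_1,i_1,\bar j_2,\ldots,\bar j_r,i_r=i$ in $T$ connecting them, and Lemma~\ref{lemma:path} applied with $s=1$, $t=r$ directly yields $\bar j\in\svt(\dotto{i_1} a)$. Taking $i':=i_1$ and $b'':=a-e_{i_1}=b+e_i-e_{i_1}$, what remains is precisely to check that $b''\in P_G^-$ (not merely $b''\in\PGpm$, which is automatic).

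The main obstacle is this last verification, and the strategy is to realize $b''$ as the left-degree vector of an explicit spanning tree of $G$ and then prove the auxiliary claim that \emph{every} spanning tree of $G$ has left-degree vector in $P_G^-$. Concretely, let $T':=(T\setminus\{(i_1,\bar j)\})\cup\{(i,\bar j)\}$; since the subpath $i_1,\bar j_2,\ldots,i_r$ survives the deletion and $(i,\bar j)\in G$, the graph $T'$ is again a spanning tree of $G$, and $\LDm(T')=b+e_i-e_{i_1}=b''$. For the auxiliary claim, root an arbitrary spanning tree $T'$ of $G$ at any $k\in[m]$ and let $\phi(\bar j)\in N_{\bar j}(T')\subseteq N_{\bar j}(G)$ denote the parent of $\bar j\in[\bar n]$. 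Then $\sum_{\bar j}e_{\phi(\bar j)}\in\sum_{\bar j}\Delta_{N_{\bar j}(G)}=P_G$ by construction, while counting the children of each $i\in[m]$ in the rooted tree (with $|\phi^{-1}(i)|=\deg_i(T')-1$ for $i\neq k$ and $|\phi^{-1}(k)|=\deg_k(T')$) gives $\sum_{\bar j}e_{\phi(\bar j)}=\LDm(T')+e_k$. Since $k\in[m]$ was arbitrary, $\LDm(T')+\Delta_{[m]}\subseteq P_G$, i.e., $\LDm(T')\in P_G^-$, which finishes the argument.
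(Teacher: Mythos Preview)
Your proof is correct and follows essentially the same approach as the paper's: both use Lemma~\ref{lemma:path} on the path in $T_\svt$ from $\bar j$ to the designated vertex, and both perform the same edge swap in the tree to certify that the resulting point lies in $P_G^-$. The only cosmetic differences are that you argue the inclusion $\supseteq$ directly while the paper argues by contraposition, and you spell out explicitly (via the rooting argument) why every spanning tree of $G$ has $\LDm\in P_G^-$, whereas the paper treats this as known.
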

Note that the right hand side of~\eqref{eq:N_bar_j} does not depend on $\svt$.
\begin{proof}
  First of all, it is clear that the left hand side of~\eqref{eq:N_bar_j} is contained in the right hand side, since if $b$ belongs to $P_G^-$ and $\bar j\in \svt(b\to a)$ then by Axiom~\axExistence, we have $a=b+e_i$ for some $i\in N_{\bar j}(G)$. Now assume that $a\in P_G\cap \Z^m$ does not belong to the left hand side of~\eqref{eq:N_bar_j}. Let $i$ be the (unique by Remark~\ref{rmk:set_partitionK}) index such that $\bar j\in\svt(\dottoi a)$. Then our assumption implies $a-e_i\notin P_G^-$. If $a$ does not belong to the right hand side of~\eqref{eq:N_bar_j} then we are done. Otherwise let $k\in N_{\bar j}(G)$ be an index such that $a-e_k\in P_G^-$. Consider the tree $T:=T_\svt(a-e_k)$, and let $\bar j_1, i_1,\bar j_2,\dots, i_r$ be the path in $T$ that connects $\bar j=\bar j_1$ to $k=i_r$. By Lemma~\ref{lemma:path} applied to $s=1$ and $t=r$, we get $\bar j\in \svt(\dotto{i_1} a)$. By Remark~\ref{rmk:set_partitionK}, this implies that $i_1=i$. Therefore
  \[T':=\left(T\setminus \{(i,\bar j)\}\right)\cup \{(k,\bar j)\}\]
  is again a tree, and since it satisfies $\LDm(T')=a-e_i$, we get that  $a-e_i\in P_G^-$, which contradicts our assumption. We are done with the proof.
\end{proof}

Finally, we focus on proving Theorems~\ref{thm:svt_implies_triang} and~\ref{thm:svt_implies_triang_K}.

\begin{lemma}\label{lemma:are_compatible}
Let $\svt$ be a pre-trianguloid and consider two trees $T:=T_\svt(c+e_i)$, $T':=T_\svt(c+e_k)$ for some $i\neq k\in [m]$ and $c$ such that $c+e_i,c+e_k\in P_G^-$. Suppose in addition that $|T\cap T'|=m+n-2$. Then $T$ and $T'$ are compatible.
\end{lemma}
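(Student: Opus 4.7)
The plan is to argue by contradiction: assuming $T$ and $T'$ are incompatible, Lemma~\ref{lemma:path} will force an edge of $T\setminus T'$ to also lie in $T'$. First I would extract from $|T|=|T'|=m+n-1$ and $|T\cap T'|=m+n-2$ that $|T\setminus T'|=|T'\setminus T|=1$, and from the identity $\LDm(T)-\LDm(T')=e_i-e_k$ that $T\setminus T'=\{(i,\alpha)\}$ and $T'\setminus T=\{(k,\beta)\}$ for some $\alpha,\beta\in[\bar n]$.

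Next I would invoke Lemma~\ref{lemma:compatible}: incompatibility is equivalent to $U(T,T')$ containing a directed cycle of length at least $3$. Every edge of $T\cap T'$ contributes both orientations in $U(T,T')$, so the only unidirectional edges are $i\to\alpha$ (from $(i,\alpha)\in T\setminus T'$) and $\beta\to k$ (from $(k,\beta)\in T'\setminus T$). Any directed cycle of length at least $3$ must therefore use both of these, and so has the form $i\to\alpha\to\cdots\to\beta\to k\to\cdots\to i$ with the dotted portions running through $T\cap T'$. In particular, $i$ and $k$ must lie in the same connected component of the forest $T\cap T'=T\setminus\{(i,\alpha)\}$.

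The heart of the argument is then an application of Lemma~\ref{lemma:path} to the tree $T=T_\svt(c+e_i)$. Because $(i,\alpha)$ is the edge of $T$ that separates $\alpha$ from $i$ while $i$ and $k$ lie in the same component of $T\cap T'$, the unique path from $\alpha$ to $k$ in $T$ must begin with the edge $(i,\alpha)$ and then continue from $i$ to $k$ inside $T\cap T'$. Writing this path as $\alpha=\bar j_1,\ i_1=i,\ \bar j_2,\ i_2,\ \dots,\ \bar j_r,\ i_r=k$ (with $r\geq 2$ since $i\neq k$) and feeding it into Lemma~\ref{lemma:path} with $b=c+e_i$, $s=1$, $t=r$, I would obtain
\[\alpha\in\svt(\dottoi(c+e_i+e_k))=\svt((c+e_k)\toidot)=N_i(T_\svt(c+e_k))=N_i(T'),\]
so $(i,\alpha)\in T'$, contradicting $(i,\alpha)\in T\setminus T'$.

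The parts requiring the most care are the degree bookkeeping that pins down the left endpoints $i$ and $k$ of the exchanged edges, and the verification that Lemma~\ref{lemma:path} genuinely applies: distinctness of the vertices along the path is automatic because it is a simple path in a tree, while validity of the edges $\dotto{i_s}(c+e_i+e_{i_t})$ in $\SVTGE$ is exactly what the hypothesis $c+e_i,c+e_k\in P_G^-$ provides. The main conceptual hurdle is phrasing the dichotomy cleanly: once we observe that incompatibility forces $i$ and $k$ into a common component of $T\cap T'$, the path from $\alpha$ to $k$ is the natural bridge on which to run Lemma~\ref{lemma:path}.
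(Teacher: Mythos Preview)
Your proof is correct and follows essentially the same approach as the paper: both arguments reduce incompatibility to $i$ and $k$ lying in the same component of $F=T\cap T'$, and then apply Lemma~\ref{lemma:path} along the $i$--$k$ path to derive a contradiction. The only cosmetic difference is that you prepend $\alpha$ to the path and apply Lemma~\ref{lemma:path} with $s=1$ to conclude $(i,\alpha)\in T'$ directly, whereas the paper starts the path at $i$, applies the lemma with $s=2$, and obtains the contradiction via Remark~\ref{rmk:set_partitionK} at the vertex $\bar j_2$.
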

\begin{proof}
Let $F:=T\cap T'$ and suppose that $T$ and $T'$ are not compatible. This is equivalent to saying that $i$ and $k$ belong to the same connected component of $F$, so consider a path $i_1,\bar j_2,i_2,\bar j_3,\dots,\bar j_r, i_r$ in $F$ such that $i_1=i$ and $i_r=k$ (here $r\geq 2$). Applying Lemma~\ref{lemma:path} to $b=c+e_i$, $s=2$, and $t=r$ shows $\bar j_2\in \svt(\dotto{i_2} c+e_i+e_k)$. On the other hand, the edge $(i,\bar j_2)$ belongs to $T'=T_\svt(c+e_k)$ and thus $\bar j_2\in \svt(\dotto{i} c+e_i+e_k)$. This contradicts Remark~\ref{rmk:set_partitionK} since we have assumed $i\neq i_2$.
\end{proof}

\begin{proposition}\label{prop:replaceable}
  Let $T\subset G$ be a spanning tree of $G$ and consider an edge $(v,\bar u)\in T$. Define $F:=T\setminus\{(v,\bar u)\}$. Then the following conditions are equivalent:
  \begin{enumerate}[\normalfont (i)]
  \item\label{item:replaceable_geom} The simplex $\Delta_F$ is not contained inside the boundary of $Q_G$.
  \item\label{item:replaceable_comb} There exists an edge $(v',\bar u')\in G$ such that $v'$ (resp., $\bar u'$) belongs to the connected component of $F$ that contains $\bar u$ (resp., $v$). 
  \end{enumerate}
\end{proposition}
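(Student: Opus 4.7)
The plan is to identify a single linear functional whose zero set cuts out $\Delta_F$ inside the affine hull of $Q_G$, and then to read off both conditions from the signs of this functional on the vertices of $Q_G$. Removing $(v,\bar u)$ splits the spanning tree $T$ into two connected components $C_v\ni v$ and $C_{\bar u}\ni \bar u$. Setting $I:=C_v\cap[m]$, $J:=C_v\cap[\bar n]$, $I':=C_{\bar u}\cap[m]$, $J':=C_{\bar u}\cap[\bar n]$, I consider
\[\ell(x):=\sum_{i\in I}x_i+\sum_{\bar j\in J}x_{\bar j}.\]
For any edge $(i,\bar j)\in F$ both endpoints lie in the same component, so a direct check gives $\ell(e_i-e_{\bar j})=0$; hence $\Delta_F$ lies in the hyperplane $H:=\{\ell=0\}$. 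On the other hand $\ell(e_v-e_{\bar u})=+1$, which forces $H$ to be a proper hyperplane of $\operatorname{aff}(Q_G)$. Combined with the dimension count $\dim\Delta_F=m+n-3=\dim Q_G-1$, this identifies $H\cap\operatorname{aff}(Q_G)$ with $\operatorname{aff}(\Delta_F)$, the unique hyperplane of $\operatorname{aff}(Q_G)$ containing $\Delta_F$.

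Given this, $\Delta_F\subset\partial Q_G$ iff $H$ is a supporting hyperplane of $Q_G$. Since $\ell(e_v-e_{\bar u})=+1>0$, this is equivalent to every vertex of $Q_G$ satisfying $\ell\geq 0$. A direct evaluation yields
\[\ell(e_{v'}-e_{\bar u'})=[v'\in I]-[\bar u'\in J]\in\{-1,0,+1\},\]
using Iverson brackets, and this takes the value $-1$ exactly when $v'\in I'=C_{\bar u}\cap[m]$ and $\bar u'\in J=C_v\cap[\bar n]$. Consequently $\Delta_F\not\subset\partial Q_G$ iff some edge $(v',\bar u')\in G$ satisfies $v'\in C_{\bar u}$ and $\bar u'\in C_v$, which is precisely condition~\eqref{item:replaceable_comb}.

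The main subtlety I anticipate is the degenerate case where one of $C_v$ or $C_{\bar u}$ is a single vertex, which happens exactly when $(v,\bar u)$ is pendant at $v$ or at $\bar u$ in $T$. In such a case one of $I,J,I',J'$ is empty and the linear functional $\ell$ collapses (on $\operatorname{aff}(Q_G)$) to $\pm x_v$ or $\mp x_{\bar u}$; I will verify directly that $H$ remains a proper hyperplane of $\operatorname{aff}(Q_G)$ and that both~\eqref{item:replaceable_geom} and~\eqref{item:replaceable_comb} fail in unison, using the bipartite structure of $G$: no edge of $G$ can have an endpoint in $[m]$ equal to a vertex of $[\bar n]$ or vice versa, so the required $(v',\bar u')$ in~\eqref{item:replaceable_comb} cannot exist. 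This takes care of the remaining cases and completes the equivalence.
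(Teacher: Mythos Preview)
Your proof is correct. The linear functional $\ell$ does exactly the right thing, and your identification $H\cap\operatorname{aff}(Q_G)=\operatorname{aff}(\Delta_F)$ via the dimension count is sound. The step ``$\Delta_F\subset\partial Q_G$ iff $H$ supports $Q_G$'' uses the standard fact that a codimension-one convex set contained in the boundary of a polytope lies in a single facet; this is fine, though you might want to state it explicitly.

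The paper organizes the argument differently. For \eqref{item:replaceable_comb}$\Rightarrow$\eqref{item:replaceable_geom} it bypasses the supporting-hyperplane criterion entirely: given $(v',\bar u')$, it sets $T':=F\cup\{(v',\bar u')\}$, observes that $T$ and $T'$ are compatible spanning trees, so $\Delta_T\cap\Delta_{T'}=\Delta_F$, and hence the relative interior of $\Delta_F$ sits in the relative interior of $Q_G$. For the converse (in contrapositive form) the paper uses essentially your linear functional, written additively as $h(x)=\sum_{i\in[m]\setminus I}x_i+\sum_{\bar j\in J}x_{\bar j}$ on the vertices $e_i+e_{\bar j}$. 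Your approach is more uniform, handling both directions with a single sign analysis of $\ell$ on the vertex set; the paper's approach for \eqref{item:replaceable_comb}$\Rightarrow$\eqref{item:replaceable_geom} is more constructive and avoids appealing to the facet-containment fact.

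One remark: your final paragraph on the ``degenerate'' case is unnecessary. When one of $I,J,I',J'$ is empty, your formula $\ell(e_{v'}-e_{\bar u'})=[v'\in I]-[\bar u'\in J]$ still holds verbatim, and $\ell(e_v-e_{\bar u})=1$ still shows $H$ is a proper hyperplane of $\operatorname{aff}(Q_G)$. So the general argument already covers these cases without modification; there is no collapse to worry about.
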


If~\eqref{item:replaceable_geom} or~\eqref{item:replaceable_comb} holds then we call $(v,\bar u)$ a  \emph{replaceable edge} of $T$.

\begin{proof}
  To show that~\eqref{item:replaceable_comb} implies~\eqref{item:replaceable_geom}, observe that the trees $T$ and $T':=F\cup\{(v',\bar u')\}$ are compatible, and thus the corresponding top-dimensional simplices $\Delta_T,\Delta_{T'}\subset Q_G$ intersect by $\Delta_F$. Therefore the relative interior of $\Delta_F$ is contained inside the relative interior of $Q_G$. Conversely, suppose that there is no edge $(v',\bar u')$ satisfying~\eqref{item:replaceable_comb}. Define $I\subset [m]$, $J\subset[\bar n]$ so that $I\cup J$ is the connected component of $F$ containing $v$. It follows that there are no edges in $G$ between $J$ and $[m]\setminus I$, and that there are no edges between $I$ and $[\bar n]\setminus J$ in $F$. Consider a linear function $h:\R^{m+n}\to\R$ defined by
  \[h(x_1,\dots,x_m,x_{\bar1},\dots,x_{\bar n}):= \sum_{i\in [m]\setminus I}x_i+\sum_{\bar j\in J} x_{\bar j}.\]
Since there are no edges in $G$ between $J$ and $[m]\setminus I$, the value of $h$ on $e_i+e_{\bar j}$ for $(i,\bar j)\in G$ is at most $1$. Thus the maximum value of $h$ on $Q_G$ is $1$. On the other hand, for every edge $(i,\bar j)\in F$ we have either $i\in [m]\setminus I$ or $\bar j\in J$. Thus $h$ is identically equal to $1$ on $\Delta_F$. Since $T$ contains the edge $(v,\bar u)$ and $h(e_v+e_{\bar u})=0$, we get that the maximum of $h$ is attained at a facet of $Q_G$ that contains $\Delta_F$, finishing the proof of the proposition.
\end{proof}

\begin{lemma}\label{lemma:replaceable}
Let $\svt$ be a trianguloid and consider a point $b\in P_G^-$. Then for any replaceable edge $(v,\bar u)$ of $T_\svt(b)$, there exists a point $b'\in P_G^-$ such that $T_\svt(b)\setminus T_\svt(b')=\{(v,\bar u)\}$.
\end{lemma}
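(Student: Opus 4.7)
The plan is to choose a suitable replacement edge $(v', \bar u') \in G$, set $b' := b + e_{v'} - e_v$, and verify both that $b' \in P_G^-$ and that $T_\svt(b')$ differs from $T := T_\svt(b)$ by removing $(v, \bar u)$ (plus gaining exactly one new edge at $v'$). Let $F := T \setminus \{(v, \bar u)\}$ and let $I \cup J$ be the component of $F$ containing $v$, with $v \in I$ and $\bar u \notin J$. By Proposition~\ref{prop:replaceable}\eqref{item:replaceable_comb}, some edge $(v', \bar u') \in G$ satisfies $v' \in [m] \setminus I$ and $\bar u' \in J$; moreover, replaceability forces $v$ to have degree at least $2$ in $T$, so $b_v \geq 1$.

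To show $b' \in P_G^-$, I apply Lemma~\ref{lemma:N_bar_j} with $\bar j = \bar u'$ and $a := b + e_{v'}$: the hypotheses $b \in P_G^-$ and $v' \in N_{\bar u'}(G)$ yield $a \in P_G(\bar u'; \svt)$, so by Remark~\ref{rmk:set_partitionK} a unique $i^* \in [m]$ satisfies $\bar u' \in \svt(\dotto{i^*} a)$ and $a - e_{i^*} \in P_G^-$. The unique path in $T$ from $\bar u'$ to $v'$ takes the form $\bar u', u_{q-1}, \bar w_{q-1}, \dots, \bar w_1, v, \bar u, v_1, \dots, v_r = v'$, first tracing the distance-$q$ path from $\bar u'$ to $v$ inside $F$ and then continuing through $\bar u$ to $v'$. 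Lemma~\ref{lemma:path} applied to this path identifies $i^* = u_{q-1}$, the $[m]$-vertex $T$-adjacent to $\bar u'$. Hence if $(v', \bar u')$ is chosen so that $\bar u'$ is a $T$-neighbor of $v$ (i.e., $q = 1$), then $i^* = v$ and $b' = a - e_v \in P_G^-$; such a choice is possible whenever some $T$-neighbor of $v$ in $J$ admits a $G$-edge to $[m] \setminus I$. In the remaining case, I induct on $q$, chaining applications of Lemma~\ref{lemma:N_bar_j} with the hexagon axiom (T4') to transport $P_G^-$-membership step by step along the segment $v = u_0, u_1, \dots, u_{q-1}$ until $b'$ is reached.

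With $b' \in P_G^-$ established, the structure of $T_\svt(b')$ follows from the axioms. Applying Lemma~\ref{lemma:path} to the subpath $\bar u, v_1, \bar u_2, \dots, v_r = v'$ of $T$ gives $\bar u \in \svt(\dotto{v_1} b + e_{v'})$, so uniqueness (Remark~\ref{rmk:set_partitionK}) forces $\bar u \notin \svt(\dotto{v} b + e_{v'}) = N_v(T_\svt(b'))$, whence $(v, \bar u) \notin T_\svt(b')$. The containment axiom (T3') applied to $a = b + e_{v'}$ and $a + e_v - e_{v'} = b + e_v$ (valid since $b_v \geq 1$) then gives $\svt(\dotto{v} b + e_{v'}) \subseteq N_v(T)$, and cardinality (T1') pins down $N_v(T_\svt(b')) = N_v(T) \setminus \{\bar u\}$. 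Combined with the $[m]$-degree changes (only $v$ loses and $v'$ gains a neighbor) and the compatibility of $T$ and $T_\svt(b')$ provided by Lemma~\ref{lemma:are_compatible} once $|T \cap T_\svt(b')| = m + n - 2$ is verified, this forces $T \setminus T_\svt(b') = \{(v, \bar u)\}$ as required. The main obstacle is the inductive argument for $b' \in P_G^-$ in the ``long-path'' case: one must identify three distinct indices at each hexagon application and verify the differing-neighborhood hypothesis $\svt(c + e_i \tojdot) \neq \svt(c + e_k \tojdot)$ using the $\svt$-values already established along the path from $v$ to $\bar u'$.
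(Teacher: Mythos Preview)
Your proposal has a genuine gap beyond the ``long-path'' induction you flag. The deeper problem is that fixing a single replaceability witness $(v',\bar u')$ and setting $b' = b + e_{v'} - e_v$ need not give $T_\svt(b)\setminus T_\svt(b') = \{(v,\bar u)\}$, even when $b' \in P_G^-$. You correctly verify $N_v(T_\svt(b')) = N_v(T)\setminus\{\bar u\}$ via (T3') and (T1'), but for $j \neq v, v'$ you only know $\deg_j(T_\svt(b')) = \deg_j(T)$, not $N_j(T_\svt(b')) = N_j(T)$. Your appeal to Lemma~\ref{lemma:are_compatible} to close this is circular: that lemma \emph{assumes} $|T\cap T_\svt(b')| = m+n-2$, which is precisely the conclusion you need. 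Equal degrees plus compatibility do not by themselves force two trees to differ in a single edge, and in any case you have not established compatibility independently.

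This is exactly why the paper's argument does not commit to a single $v'$ in advance. With $c := b - e_v$, it considers the entire set $B' = \{i : c+e_i \in P_G^-,\ \bar u \notin \svt(c+e_i\tovdot)\}$ of candidate indices (your Lemma~\ref{lemma:path} argument is essentially the paper's proof that any replaceability witness $v'$ lies in $B'$, so $B'\neq\emptyset$). For $i\in B'$ and $j\neq v$ it sets $M(i,j)=1$ iff $\svt(c+e_v\tojdot)=\svt(c+e_i\tojdot)$ (or $j=i$); the goal becomes finding $i\in B'$ with $M(i,\cdot)\equiv 1$, which is exactly the condition $N_j(T_\svt(c+e_i))=N_j(T)$ for all $j\neq v$. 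The Hexagon axiom gives $M(i,j)=1$ automatically whenever $j\notin B'$, and on $B'\times B'$ it forces $M(i,j)+M(j,i)>0$ together with $\bigl(M(i,k)\neq M(j,k)\Rightarrow M(k,i)=M(j,i)\text{ and }M(k,j)=M(i,j)\bigr)$. A short induction on $|B'|$ then shows any such $0/1$ matrix with ones on the diagonal has an all-ones row. The correct $b'$ is found by this search over $B'$; there is no reason the particular $v'$ you started with is the one that works, and the paper's matrix argument is what replaces the missing step in your outline.
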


\def\dottov{{\dotto v}}
\def\tovdot{{\todot v}}
\def\dottot{{\dotto t}}
\def\totdot{{\todot t}}

\begin{proof}
Denote $c:=b-e_v$, so $(v,\bar u)$ is a replaceable edge of $\T(c+e_v)$. Let
\[B:= \{i\in[m]:\ c+e_i\in P_G^-\},\]
and consider a subset $B'\subset B$ consisting of all indices $i\in B$ such that
\begin{equation}\label{eq:B'}
\bar u\notin \svt(c+e_i\tovdot).
\end{equation}

Note that $v\in B$ and $v\notin B'$. We claim that the set $B'$ is non-empty. Indeed, let $(v',\bar u')\in G$ be any edge satisfying the conditions of Proposition~\ref{prop:replaceable}, part~\eqref{item:replaceable_comb}.  Our goal is to prove that $v'$ belongs to $B'$. First note that $c+e_{v'}\in P_G^-$ since replacing $(v,\bar u)$ with $(v',\bar u')$ in $T$ produces a spanning tree $T'$ of $G$ with $\LDm(T')=c+e_{v'}$. We now need to show that~\eqref{eq:B'} holds for $i:=v'$. 

Consider the path $i_1,\bar j_2,i_2,\dots,\bar j_r,i_r$  from $i_1:=v$ to $i_r:=v'$ in $T$. It must pass through the edge $(v,\bar u)$, thus $\bar j_2=\bar u$. Applying Lemma~\ref{lemma:path} to $b=c+e_v$, $s=2$ and $t=r$ yields
\[\bar u\in \svt(\dotto{i_2} c+e_{v}+e_{v'}).\]
By Remark~\ref{rmk:set_partitionK}, we therefore have
\[\bar u\notin \svt(\dotto{v} c+e_v+e_{v'}).\]
Thus indeed~\eqref{eq:B'} holds for $i:=v'$, and we have shown that $B'\neq\emptyset$.


Recall that $v\notin B'$. For $i\in B'$ and $j\in [m]\setminus \{v\}$, define
\[M(i,j)=
  \begin{cases}
    1, & \text{if $j=i$ or $\svt(c+e_v\tojdot)= \svt(c+e_i\tojdot);$}\\
    0, &\text{otherwise.}\\
  \end{cases} \]

Our main goal is to find $i\in B'$ such that $M(i,j)=1$ for all $j\neq v$. Indeed, for such $i$ we clearly have $\T(c+e_v)\setminus \T(c+e_i)=\{(v,\bar u)\}$. We first show that for any $i\in B'$ and $j\notin B'$ such that $j\neq v$ we have $M(i,j)=1$. Indeed, suppose that $j\notin B'$. If $j\notin B$ then $c+e_j\notin P_G^-$ so by Axiom~\axHexagon, we must have $M(i,j)=1$. If $j\in B\setminus B'$ then we have $\bar u\in \svt(c+e_j\tovdot)$ but $\bar u\notin \svt(c+e_i\tovdot)$. Thus $\svt(c+e_j\tovdot)\neq \svt(c+e_i\tovdot).$
Applying Axiom~\axHexagon to these three indices, we immediately get $M(i,j)=1$. We have shown that $M(i,j)=1$ for all $i\in B'$ and $j\notin B'\cup\{v\}$. It remains to find $i\in B'$ such that for all $j\in B'$ we have $M(i,j)=1$.

Axiom~\axHexagon imposes certain restrictions on $M(i,j)$. First, applying it to distinct indices $v,i,j$, we get
\begin{equation}\label{eq:M2}
M(i,j)+M(j,i)>0\quad\text{for all $i,j\in B'$}.
\end{equation}
Second, applying it to distinct indices $i,j,k\in B'$ yields the following:
\begin{equation}\label{eq:M3}
\text{if $M(i,k)\neq M(j,k)$ then $M(k,i)=M(j,i)$ and $M(k,j)=M(i,j)$.}
\end{equation}
Indeed, $M(i,k)\neq M(j,k)$ implies that $\svt(c+e_i\tokdot)\neq\svt(c+e_j\tokdot)$, because one of these two sets is equal to $\svt(c+e_v\tokdot)$ while the other one is not. Applying Axiom~\axHexagon, we get that $\svt(c+e_k\toidot)=\svt(c+e_j\toidot)$ and $\svt(c+e_k\tojdot)=\svt(c+e_i\tojdot)$. These conditions imply that $M(k,i)=M(j,i)$ and $M(k,j)=M(i,j)$, respectively.

We now prove that any $r\times r$ matrix $M(i,j)$ satisfying $M(i,i)=1$ for all $i$ together with \eqref{eq:M2} and~\eqref{eq:M3}, has a row filled with ones. We do this by induction on the size $r=|B'|$ of $M$. We have shown that $B'$ is non-empty, so the base case is $r=1$ which is clear. Suppose that $r>1$ and by induction we may assume that there exists $1\leq i<r$ such that $M(i,j)=1$ for all $1\leq j<r$. If $M(i,r)=1$ then we are done, so suppose that $M(i,r)=0$. We are going to show that in this case, $M(r,j)=1$ for all $1\leq j\leq r$. First, by~\eqref{eq:M2}, $M(r,i)=1$. We also know that $M(r,r)=1$. Now consider $1\leq j<r$ such that $j\neq i$. If $M(j,r)=0$ then by~\eqref{eq:M2}, $M(r,j)=1$ and we are done. So suppose that $M(j,r)=1$. Then $M(j,r)\neq M(i,r)$, so applying~\eqref{eq:M3} yields $M(r,j)=M(i,j)$, and by the induction hypothesis, $M(i,j)=1$. We have shown that $M(r,j)=1$ for all $1\leq j\leq r$, thus finishing the induction step together with the proof of the lemma.
\end{proof}

We are now ready to prove our main result.

\def\Trees{\Tcal}
\begin{proof}[Proof of Theorem~\ref{thm:svt_implies_triang}]
  Note that we have already shown in Section~\ref{sec:triang_implies_svt} that if $\triang$ is a triangulation of $Q_G$ then $\svt_\triang$ defined by~\eqref{eq:svt_triang} is a trianguloid. Suppose now that $\svt$ is a trianguloid, and let $\Trees(\svt)=\{T_\svt(b)\mid b\in P_G^-\cap\Z^m\}$ be the corresponding collection of trees. We would like to show that the simplicial complex $\triang_\svt$ whose top-dimensional simplices are $\{\Delta_T\mid \Trees(\svt)\}$ is a triangulation of $Q_G$.

  So far we have the following situation:
  \begin{enumerate}[\quad (1)]
  \item\label{item:volume} $\{\Delta_T\mid \Trees(\svt)\}$ is a collection of top-dimensional simplices inside $Q_G$, each of them has the same volume, and their total volume equals the volume of $Q_G$;
  \item\label{item:facet} for every $T\in\Trees(\svt)$, if a facet $\Delta_F$ of $\Delta_T$ is not contained inside the boundary of $Q_G$ then there exists $T'\in\Trees(\svt)$ such that $\Delta_T\cap\Delta_{T'}=\Delta_F$.
  \end{enumerate}
  Indeed, as we have already noted, Claim~\eqref{item:volume} is explained in~\cite[Lemma~12.5]{Postnikov}. Claim~\eqref{item:facet} is proven as follows. Let $(v,\bar u)$ be the unique edge in $T\setminus F$. Note that $\Delta_F$ not being contained inside the boundary of $Q_G$ by Proposition~\ref{prop:replaceable} implies that $(v,\bar u)$ is a replaceable edge in $T$. Then by Lemma~\ref{lemma:replaceable}, there exists another tree $T'\in\Trees(\svt)$ such that $T\setminus T'=\{(v,\bar u)\}$. Finally, by Lemma~\ref{lemma:are_compatible}, the trees $T$ and $T'$ are compatible, and thus $\Delta_T\cap\Delta_{T'}=\Delta_F$.

  We need to prove two claims:
  \begin{enumerate}[\quad (a)]
  \item\label{item:union} The set $R_\svt:=\cup_{T\in\Trees(\svt)} \Delta_T$ equals $Q_G$.
  \item\label{item:intersect_properly} For $T,T'\in\Trees(\svt)$, we have $\Delta_T\cap \Delta_{T'}=\Delta_{T\cap T'}$.
  \end{enumerate}
  To show~\eqref{item:union}, choose any point $q\in Q_G$ and suppose that it does not belong to $R_\svt$. Choose a generic point $r\in R_\svt$ and find the smallest $0<t<1$ such that $p:=(1-t)q+tr\in R_\svt$. Since $r$ is generic, $p$ must belong to a facet $\Delta_F$ of $\Delta_T$ for some $T\in\Trees(\svt)$. This facet is clearly not contained inside the boundary of $Q_G$, thus there exists another tree $T'\in\Trees(\svt)$ such that $F\subset T'$ and $\Delta_T\cap\Delta_T'=\Delta_F$. We get a contradiction with the minimality of $t$, thus finishing the proof of~\eqref{item:union}. Since the total volume of the simplices in $\triang_\svt$ equals the volume of $Q_G$, it follows that any two simplices in $\triang_\svt$ have disjoint interiors.

  We now prove~\eqref{item:intersect_properly}. Let $F$ be any forest of $G$ such that $F\subset T$ for some $T\in\Trees(\svt)$ and let $\Trees_F(\svt)=\{T\in\Trees(\svt)\mid F\subset T\}$. Choose any point $f$ in the relative interior of $\Delta_F$. By the same argument as in the previous paragraph, there exists a small ball $B$ around $f$ that is fully contained inside $R_F:=\cup_{T\in\Trees_F(\svt)} \Delta_T$. (Indeed, we just choose $B$ to be such that for all $T\in\Trees_F(\svt)$ and any facet of $\Delta_{T}$ that does not contain $f$, $B$ does not intersect the hyperplane containing this facet.)

  Thus for any $T\in \Trees(\svt)\setminus \Trees_F(\svt)$, $\Delta_{T}$ cannot contain $f$ because then its interior will intersect $B$ and thus it will also intersect the interior of $\Delta_{T'}$ for some $T'\in \Trees_F(\svt)$. We have shown~\eqref{item:intersect_properly} which finishes the proof of Theorems~\ref{thm:svt_implies_triang} and~\ref{thm:svt_implies_triang_K}. Note also that the map $\svt\mapsto \triang_\svt$ is inverse to the map $\triang\mapsto \svt_\triang$ by Lemma~\ref{lemma:two_definitions_agree}.
\end{proof}

\section{Proof of Theorems~\ref{thm:bijections_different_K} and~\ref{thm:bijections_different}}\label{sec:bijections_different}
\def\RED{{A}}
\def\BLUE{{A'}}
\def\RB{{B}}
\def\BR{{B'}}
\def\RBN{{M}}
\def\BRN{{M'}}

Suppose that $\triang$ and $\triang'$ are two different triangulations of $Q_G$ and let $\svt:=\svt_\triang$, $\svt':=\svt_{\triang'}$ be the corresponding trianguloids. We are going to show that the maps $\phi_\triang,\phi_{\triang'}: P_G^-\cap\Z^m\to P_{G^\ast}^-\cap\Z^n$ must be different. 

For the sake of contradiction, assume that the maps $\phi_\triang$ and $\phi_{\triang'}$ are the same. Thus for each $b\in P_G^-\cap\Z^m$ and each $\bar u\in[\bar n]$, the degree of $\bar u$ in $T_\svt(b)$ equals the degree of $\bar u$ in $T_{\svt'}(b)$. However, since the triangulations themselves are different, there exists $b\in P_G^-\cap\Z^m$ and $\bar u\in[\bar n]$ such that $N_{\bar u}(T_\svt(b))\neq N_{\bar u}(T_{\svt'}(b))$. Let us fix this $\bar u$ and show that for some $b\in P_G^-\cap\Z^m$, the degrees of $\bar u$ in $T_{\svt}(b)$ and $T_{\svt'}(b)$ must be different. We introduce the following two subsets of $\SVTGE$:
\begin{equation*}
  \begin{split}
    \RED&:=\{(b\to a)\in\SVTGE\mid a\in P_G\cap\Z^m,\  b\in P_G^-\cap\Z^m: \bar u\in \svt(b\to a)\};\\
    \BLUE&:=\{(b\to a)\in\SVTGE\mid a\in P_G\cap\Z^m,\  b\in P_G^-\cap\Z^m: \bar u\in \svt'(b\to a)\}.
  \end{split}
\end{equation*}

\begin{lemma}\label{lemma:exactly_one}
  For every  point $a\in P_G$, exactly one of the following is true:
\begin{itemize}
\item $a$ is incident to exactly edge in $\RED$ and to exactly one edge in $\BLUE$;
\item $a$ is not incident to any edge in $\RED\cup\BLUE$.
\end{itemize}
\end{lemma}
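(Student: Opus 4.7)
The plan is to reduce the dichotomy to the observation, already established in Lemma~\ref{lemma:N_bar_j}, that the set of ``$\bar u$-reachable'' heads $P_G(\bar u;\svt)$ does not actually depend on the pre-trianguloid $\svt$. First I would record that every edge of $\SVTGE$ has its tail in $\PGpm$, hence in the affine hyperplane $\{x\in\R^m:\sum_i x_i=n-1\}$, while its head lies in $P_G$, which sits inside $\{x\in\R^m:\sum_i x_i=n\}$. In particular, for any $a\in P_G\cap\Z^m$, the only edges of $\SVTGE$ incident to $a$ are the ones ending at $a$, so incidence for $\RED$ and $\BLUE$ need only be counted from the head side.

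Next I would use Axioms~\axCardinality and~\axExistence together with Remark~\ref{rmk:set_partitionK} (in the form for general $G$) to conclude that, for fixed $a\in P_G\cap\Z^m$, the sets $\svt(a-e_i\toidot)$ (ranging over those $i\in [m]$ with $a_i>0$) form a partition of $[\bar n]$. Consequently there is a unique index $i=i_\svt(a)\in N_{\bar u}(G)$ with $\bar u\in\svt(a-e_i\toidot)$, and the edge $a-e_i\to a$ belongs to $\RED$ if and only if additionally $a-e_i\in P_G^-$, that is, if and only if $a\in P_G(\bar u;\svt)$. Hence $a$ is incident to either zero or exactly one edge in $\RED$, the latter alternative occurring precisely when $a\in P_G(\bar u;\svt)$. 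The same reasoning applied to $\svt'$ gives that $a$ is incident to either zero or exactly one edge of $\BLUE$, with the latter case occurring precisely when $a\in P_G(\bar u;\svt')$.

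Finally, Lemma~\ref{lemma:N_bar_j} identifies
\[
P_G(\bar u;\svt)=\{b+e_i\mid b\in P_G^-\cap\Z^m,\ i\in N_{\bar u}(G)\}=P_G(\bar u;\svt'),
\]
a set depending only on the graph $G$ and the vertex $\bar u$. Therefore $a$ is incident to an edge of $\RED$ if and only if $a$ is incident to an edge of $\BLUE$, and in that case each incidence count is exactly one. This is exactly the stated dichotomy, so no further argument is needed. The only mild subtlety is the distinction between $P_G^-$ and $\PGpm$ in the definition of $\SVTGE$ versus the definition of $\RED,\BLUE$, but this is precisely the content of Lemma~\ref{lemma:N_bar_j} and presents no real obstacle.
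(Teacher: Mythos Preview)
Your argument is correct and is essentially the same as the paper's: you invoke Remark~\ref{rmk:set_partitionK} to get uniqueness of the incoming edge at each $a$, and Lemma~\ref{lemma:N_bar_j} to conclude that membership in $P_G(\bar u;\svt)$ is independent of $\svt$, hence the two cases for $\RED$ and $\BLUE$ coincide. The only addition is your explicit remark that $a\in P_G$ can only be a head of an edge in $\SVTGE$, which the paper leaves implicit.
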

\begin{proof}
By Remark~\ref{rmk:set_partitionK} together with Lemma~\ref{lemma:N_bar_j}, for each $a\in P_G(\bar j;\svt)=P_G(\bar j,\svt')$, there is  exactly one $b\in P_G^-$ (resp., $b'\in P_G^-$) such that $(b\to a)\in\RED$ (resp., $(b'\to a)\in \BLUE$). For $a\notin P_G(\bar j;\svt)$, there is no such $b$ (resp., $b'$).
\end{proof}

For each $b\in P_G^-\cap\Z^m$, define $N(b):=N_{\bar u}(T_\svt(b))$ and $N'(b):=N_{\bar u}(T_{\svt'}(b))$. Thus for $b\in P_G^-\cap\Z^m$, we have
\[N(b)=\{i\in [m]\mid (b\toidot)\in \RED\}, \quad N'(b)=\{i\in [m]\mid (b\toidot)\in \BLUE\}.\]
We claim that if $|N(b)|=|N'(b)|$ for all $b\in P_G^-\cap\Z^m$ then $\RED=\BLUE$, or equivalently, $N(b)=N'(b)$ for all $b\in P_G^-\cap\Z^m$. 

Let us denote $\RB:=\RED\setminus \BLUE$ and $\BR:=\BLUE\setminus \RED$.


\begin{lemma}\label{lemma:transfer}
  Suppose that for some $c\in\Z^m$ and distinct indices $i,j,k\in [m]$, we have $c+e_i,c+e_j\in P_G^-$. Assume in addition that
  \[(c+e_i\tokdot)\in\RB,\quad (c+e_i\tojdot)\in\BR,\quad (c+e_j\toidot)\in \RB.\]
  Then $(c+e_j\tokdot)\in\RB$.
\end{lemma}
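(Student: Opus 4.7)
The plan is to check the three conditions needed for $(c+e_j\tokdot)\in\RB$: that it is an edge of $\SVTGE$, that $\bar u\in\svt(c+e_j\tokdot)$, and that $\bar u\notin\svt'(c+e_j\tokdot)$. The first condition is immediate, since $c+e_j\in P_G^-$ forces $c+e_j+e_k\in P_G$ and hence $(c+e_j\tokdot)\in\SVTGE$. The main tool for the remaining two will be Axiom~\axHexagon applied to $\svt$ and to $\svt'$ separately, combined with the partition property from Remark~\ref{rmk:set_partitionK} (which generalizes to arbitrary $G$): at any $a\in P_G\cap\Z^m$, the sets $\svt(\dottoi a)$ over $i$ with $a_i>0$ partition $[\bar n]$, and likewise for $\svt'$.

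To prove $\bar u\in\svt(c+e_j\tokdot)$, I would argue by contradiction. Assuming $\bar u\notin\svt(c+e_j\tokdot)$, the hypothesis $\bar u\in\svt(c+e_i\tokdot)$ gives $\svt(c+e_j\tokdot)\neq\svt(c+e_i\tokdot)$. Since both $c+e_j$ and $c+e_i$ lie in $P_G^-$, Axiom~\axHexagon applied to $\svt$ with the axiom's $(i,j,k)$ played by our $(j,k,i)$ fires, and yields in particular the equality $\svt(c+e_j\toidot)=\svt(c+e_k\toidot)$. The hypothesis $\bar u\in\svt(c+e_j\toidot)$ then transports to $\bar u\in\svt(c+e_k\toidot)$. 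But at the common target $c+e_i+e_k\in P_G$, the parts $\svt(c+e_i\tokdot)$ and $\svt(c+e_k\toidot)$ of the partition of $[\bar n]$ are disjoint, yet both would contain $\bar u$, a contradiction.

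The argument for $\bar u\notin\svt'(c+e_j\tokdot)$ is completely symmetric. Assume for contradiction that $\bar u\in\svt'(c+e_j\tokdot)$. Since $\bar u\notin\svt'(c+e_i\tokdot)$, the two labels differ, and Axiom~\axHexagon applied to $\svt'$ at $c$ with the same permutation of indices yields $\svt'(c+e_k\tojdot)=\svt'(c+e_i\tojdot)$. From the hypothesis $\bar u\in\svt'(c+e_i\tojdot)$ we then get $\bar u\in\svt'(c+e_k\tojdot)$. But at the target $c+e_j+e_k$ the parts $\svt'(c+e_j\tokdot)$ and $\svt'(c+e_k\tojdot)$ of the partition are disjoint, producing the contradiction.

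I do not foresee a serious obstacle here: the content is just to pick the correct permutation of indices when invoking \axHexagon. The one point that requires care is that the axiom's inequality hypothesis involves the \emph{middle} direction $j'$, while the inequality I have available (after the assumption for contradiction) is in direction $k$; cycling $(i,j,k)\mapsto(j,k,i)$ is precisely what aligns the two. It is worth noting as well that I never need to verify $c+e_k\in P_G^-$ by hand, since that membership appears as a free byproduct of the hexagon axiom whenever it triggers.
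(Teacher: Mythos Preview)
Your proof is correct and uses the same ingredients as the paper's (the hexagon axiom \axHexagon together with the partition property of Remark~\ref{rmk:set_partitionK}). The only difference is organizational: the paper splits into the cases $c+e_k\notin P_G^-$ and $c+e_k\in P_G^-$, handling the first via the contrapositive of \axHexagon and the second via a different permutation of indices, whereas your contradiction argument with the permutation $(i',j',k')=(j,k,i)$ handles both cases uniformly because $c+e_k\in P_G^-$ falls out of the axiom's conclusion; this is a mild streamlining rather than a genuinely different approach.
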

\begin{proof}
  If $c+e_k\notin P_G^-$ then by Axiom~\axHexagon, we have $\svt(c+e_i\tokdot)=\svt(c+e_j\tokdot)$ and $\svt'(c+e_i\tokdot)=\svt'(c+e_j\tokdot)$ so if $(c+e_i\tokdot)\in \RB$ then the same holds for $(c+e_j\tokdot)$. Suppose now that $c+e_k\in P_G^-$. Consider the point $c+e_i+e_k\in P_G$. We know that $(c+e_i\tokdot)\in\RB$, and thus by Remark~\ref{rmk:set_partitionK}, $(c+e_k\toidot)\notin\RED$. Therefore $\svt(c+e_j\toidot)\neq \svt(c+e_k\toidot)$, so by Axiom~\axHexagon we have $\svt(c+e_i\tokdot)=\svt(c+e_j\tokdot)$, and therefore $(c+e_j\tokdot)\in\RED$. The only thing left to show is that $(c+e_j\tokdot)\notin\BLUE$. Indeed, suppose otherwise that $(c+e_j\tokdot)\in\BLUE$. By Remark~\ref{rmk:set_partitionK}, we get that $(c+e_k\tojdot)\notin\BLUE$ which implies $\svt'(c+e_i\tojdot)\neq \svt'(c+e_k\tojdot)$. On the other hand, we also have $\svt'(c+e_i\tokdot)\neq\svt'(c+e_j\tokdot)$ because we know that $(c+e_i\tokdot)\notin\BLUE$. These two conditions together violate Axiom~\axHexagon, and thus $(c+e_j\tokdot)\notin\BLUE$. We are done with the proof.
\end{proof}

For $b\in P_G^-\cap\Z^m$, denote $\RBN(b):=N(b)\setminus N'(b)$ and $\BRN(b)=N'(b)\setminus N(b)$, thus we have $|\RBN(b)|=|\BRN(b)|$ for all $b$. Let us find the point $b^\parr1\in P_G^-\cap\Z^m$ for which  $|\RBN(b^\parr1)|=|\BRN(b^\parr1)|$ is maximal. Choose some $i\in \RBN(b^\parr1)$, thus $(b^\parr1\toidot)\in\RB$. We are going to construct an infinite sequence of points $b^\parr1,b^\parr2,\dots$ such that for all $s\geq 1$, $\RBN(b^\parr s)=\RBN(b^\parr 1)$, $\BRN(b^\parr s)=\BRN(b^\parr 1)$, and $b^\parr{s+1}=b^\parr s+e_i-e_{j^\parr s}$ for some $j^\parr s\in [m]\setminus \{i\}$. Clearly this leads to a contradiction because the $i$-th coordinate of a point in $P_G^-$ cannot be arbitrarily large.

\def\jone{{j^\parr 1}}
\def\dottojone{\dotto \jone}
\def\tojonedot{\todot \jone}
Let us show how to construct the point $b^\parr2$ from $b^\parr1$. By Lemma~\ref{lemma:exactly_one}, there exists a unique index $j^\parr 1\in [m]$ such that $(\dottojone b^\parr1+e_i)\in \BR$, so denote $b^\parr2:=b^\parr1+e_i-e_\jone$. By Lemma~\ref{lemma:exactly_one} again, we have $b^\parr2\in P_G^-\cap\Z^m$.

We claim that $\RBN(b^\parr1)=\RBN(b^\parr2)$ and $\BRN(b^\parr1)=\BRN(b^\parr2)$.  Indeed, let $k\neq i,\jone$ be any element of $\BRN(b^\parr1)$. Letting $c:=b^\parr1-e_\jone$, we have
\[(c+e_\jone\tokdot)\in \BR,\quad (c+e_\jone\toidot)\in\RB,\quad (c+e_i\tojonedot)\in\BR.\]
Then by Lemma~\ref{lemma:transfer} (with $\RB$ and $\BR$ swapped), we must have $(c+e_i\tokdot)\in\BR$, and thus $k\in \BRN(b^\parr2)$. In addition, we know that $i\notin\BRN(b^\parr1)$ and $\jone\in \BRN(b^\parr2)$. It follows that $\BRN(b^\parr1)\subset\BRN(b^\parr2)$, but since $\BRN(b^\parr1)$ has maximal size, we must have $\BRN(b^\parr1)=\BRN(b^\parr2)$. Switching the roles of $b^\parr1$ and $b^\parr2$ and of $i$ and $\jone$, we see that $\RBN(b^\parr1)=\RBN(b^\parr2)$. In particular, we have $i\in \RBN(b^\parr2)$. Repeating this argument for $b^\parr2$, we find $b^\parr3$, etc. As we have noted earlier, constructing such an infinite sequence leads to a contradiction with the assumption that $\RBN(b)$ is non-empty. This finishes the proof of Theorems~\ref{thm:bijections_different} and~\ref{thm:bijections_different_K}.\hfill\qed

\newcommand{\arxiv}[1]{\href{https://arxiv.org/abs/#1}{\textup{\texttt{arXiv:#1}}}}

\bibliographystyle{alpha}
\bibliography{trianguloids}

\end{document}